\newcommand{\erase}[1]{}
\newtheorem{theorem}{Theorem}[section]
\newtheorem{lemma}[theorem]{Lemma}
\newtheorem{proposition}[theorem]{Proposition}
\newtheorem{corollary}[theorem]{Corollary}
\newtheorem{_algorithm}[theorem]{Algorithm}
\newenvironment{algorithm}{\begin{_algorithm}\rm}{\hfill \rule{3pt}{6pt}\end{_algorithm}}
\newtheorem{_procedure}[theorem]{Procedure}
\newtheorem{_definition}[theorem]{Definition}
\newenvironment{definition}{\begin{_definition}\rm}{\end{_definition}}
\newtheorem{_remark}[theorem]{\it Remark}
\newenvironment{remark}{\begin{_remark}\rm}{\end{_remark}}
\newtheorem{_example}[theorem]{Example}
\newenvironment{example}{\begin{_example}\rm}{\end{_example}}
\newtheorem{_assumption}[theorem]{Assumption}
\newtheorem{_construction}[theorem]{Construction}
\newtheorem{_claim}[theorem]{Claim}
\newtheorem{_conjecture}[theorem]{Conjecture}
\numberwithin{equation}{section}
\numberwithin{table}{section}
\numberwithin{figure}{section}
\renewcommand{\qed}{\hfill {$\Box$}}
\newcommand{\C}{\mathord{\mathbb C}}
\newcommand{\F}{\mathord{\mathbb F}}
\renewcommand{\P}{\mathord{\mathbb  P}}
\newcommand{\Q}{\mathord{\mathbb  Q}}
\newcommand{\R}{\mathord{\mathbb R}}
\newcommand{\Z}{\mathord{\mathbb Z}}
\newcommand{\AAA}{\mathord{\mathcal A}}
\newcommand{\BBB}{\mathord{\mathcal B}}
\newcommand{\CCC}{\mathord{\mathcal C}}
\newcommand{\DDD}{\mathord{\mathcal D}}
\newcommand{\GGG}{\mathord{\mathcal G}}
\newcommand{\HHH}{\mathord{\mathcal H}}
\newcommand{\III}{\mathord{\mathcal I}}
\newcommand{\LLL}{\mathord{\mathcal L}}
\newcommand{\NNN}{\mathord{\mathcal N}}
\newcommand{\OOO}{\mathord{\mathcal O}}
\newcommand{\PPP}{\mathord{\mathcal P}}
\newcommand{\RRR}{\mathord{\mathcal R}}
\newcommand{\SSS}{\mathord{\mathcal S}}
\newcommand{\TTT}{\mathord{\mathcal T}}
\newcommand{\UUU}{\mathord{\mathcal U}}
\newcommand{\VVV}{\mathord{\mathcal V}}
\newcommand{\WWW}{\mathord{\mathcal W}}
\newcommand{\SSSS}{\mathord{\mathfrak S}}
\newcommand{\maprightsp}[1]{\; \smash{\mathop{\; \to \; }\limits\sp{#1}}\; }
\newcommand{\maprightspsb}[2]{\; \smash{\mathop{\; \longrightarrow \; }\limits\sp{#1}\limits\sb{#2}}\; }
\newcommand{\maprightinjsp}[1]{\; \smash{\mathop{\; \inj \; }\limits\sp{#1}}\; }
\newcommand{\mapdown}{\phantom{\Big\downarrow}\hskip -8pt \downarrow}
\newcommand{\mapdownright}[1]{\mapdown\rlap{$\vcenter{\hbox{$\scriptstyle#1$}}$}}
\newcommand{\mapdownsurj}{
\hbox{$\bigm\downarrow$}
\llap{\hbox{\raise 2pt\hbox{$\bigm\downarrow$}}}%
\vstrechmapdown
}
\newcommand{\mapupsurj}{
\hbox{$\bigm\uparrow$}
\llap{\hbox{\raise 2pt\hbox{$\bigm\uparrow$}}}%
\vstrechmapup
}
\newcommand{\inj}{\hookrightarrow}
\newcommand{\isom}{\xrightarrow{\hskip -1.5pt \raise -.5pt\hbox{\tiny $\sim$}}}
\newcommand{\set}[2]{\{\; {#1} \; \mid \; {#2} \;  \}}
\newcommand{\shortset}[2]{\{ {#1} \,|\, {#2}   \}}
\newcommand{\gen}[1]{\langle {#1}  \rangle}
\newcommand{\tensor}{\otimes}
\newcommand{\sprime}{\sp\prime}
\newcommand{\spar}[1]{\sp{(#1)}}
\newcommand{\spprime}{\sp{\prime\prime}}
\newcommand{\spcirc}{\sp{\mathord{\circ}}}
\newcommand{\sperp}{\sp{\perp}}
\newcommand{\dual}{\sp{\vee}}
\newcommand{\inv}{\sp{-1}}
\newcommand{\Hom}{\mathord{\mathrm{Hom}}}
\newcommand{\OG}{\mathord{\mathrm{O}}}
\newcommand{\id}{\mathord{\mathrm{id}}}
\newcommand{\Ker}{\operatorname{\mathrm{Ker}}\nolimits}
\newcommand{\Image}{\operatorname{\mathrm{Im}}\nolimits}
\newcommand{\Aut}{\mathord{\mathrm{Aut}}}
\newcommand{\closure}[1]{\overline{#1}}
\newcommand{\mystruth}[1]{\phantom{\hbox{\vrule height #1}}}
\newcommand{\mystrutd}[1]{\phantom{\hbox{\vrule depth #1}}}
\newcommand{\pione}{\pi_1}
\newcommand{\intf}[1]{\langle #1\rangle}
\newcommand{\intfvoid}{\intf{\phantom{\cdot}, \phantom{\cdot}}}
\newcommand{\RDP}{\mathord{\rm RDP}}
\newcommand{\ADE}{\mathord{\rm ADE}}
\newcommand{\Stab}{\mathord{\rm Stab}}
\newcommand{\Isom}{\mathord{\rm Isom}}
\newcommand{\Nef}{\mathord{\rm Nef}}
\newcommand{\Ybar}{\overline{Y}}
\newcommand{\SY}{S_Y}
\newcommand{\SX}{S_X}
\newcommand{\Lten}{L_{10}}
\newcommand{\Pten}{\PPP_{10}}
\newcommand{\discg}[1]{A_{#1}}
\newcommand{\discf}[1]{q_{#1}}
\newcommand{\refl}[1]{s_{#1}}
\newcommand{\primM}{\overline{M}}
\newcommand{\primR}{\overline{R}}
\newcommand{\Emb}{\mathord{\rm Emb}}
\newcommand{\baremb}{\overline{\mathord{\rm emb}}}
\newcommand{\OL}{\LLL}
\newcommand{\Sigmas}{\SSS}
\newcommand{\VVVS}{\VVV_{\Sigma}}
\newcommand{\sbpar}[1]{\sb{(#1)}}
\newcommand{\Delmin}{\Delta_{\min}}
\newcommand{\res}{\mathord{\rm res}}
\newcommand{\enrinvol}{\varepsilon}
\newcommand{\Roots}{\mathord{\rm Roots}}
\newcommand{\LK}{L_{\mathord{\rm K3}}}
\newcommand{\condone}{\mathord{\rm (C1)}}
\newcommand{\condtwo}{\mathord{\rm (C2)}}
\newcommand{\condthree}{\mathord{\rm (C3)}}
\newcommand{\condfour}{\mathord{\rm (C4)}}
\begin{document}
\title[Rational double points on Enriques surfaces]%
{Rational double points on Enriques surfaces}
\author{Ichiro Shimada}
\address{Department of Mathematics, 
Graduate School of Science, 
Hiroshima University,
1-3-1 Kagamiyama, 
Higashi-Hiroshima, 
739-8526 JAPAN}
\email{ichiro-shimada@hiroshima-u.ac.jp}
\thanks{This work was supported by JSPS KAKENHI Grant Number 16H03926 and 16K13749.}

\begin{abstract}
We classify, up to some lattice-theoretic equivalence,
all possible configurations of rational double points that 
can appear on a surface whose minimal resolution is  a complex Enriques surface.
\end{abstract}

\subjclass[2010]{14J28, 14Q10}
%14J  (1973-now) Surfaces and higher-dimensional varieties [For analytic theory, see 32Jxx]
%14J28  (1980-now) K3 surfaces and Enriques surfaces
%14J  (1973-now) Surfaces and higher-dimensional varieties [For analytic theory, see 32Jxx]
%14J50  (1980-now) Automorphisms of surfaces and higher-dimensional varieties
%14Q  (1991-now) Computational aspects in algebraic geometry [See also 12Y05, 13Pxx, 68W30]
%14Q10  (1991-now) Surfaces, hypersurfaces

\keywords{Enriques surface, rational double points, hyperbolic lattice}
\maketitle
\section{Introduction}
We work over the complex number field $\C$.
\par
The automorphism group of an Enriques surface
changes in a complicated way under specializations of 
the surface~(\cite[Section 3.1]{BarthPeters1983},~\cite[Remark 7.17]{ShimadaHessian}),
and smooth rational curves on the surface  control the change of the automorphism group.
Hence the study of configurations of smooth rational curves
are important for the explicit description  of 
variations of  automorphism groups on the family of Enriques surfaces (see Nikulin~\cite{Nikulin1984}).
On the other hand, 
in the investigation of $\Q$-homology projective planes,
Hwang-Keum-Ohashi~\cite{HKO2015} and Sch\"utt~\cite{Schutt2015}
classified all possible maximal root systems of smooth rational curves on Enriques surfaces.
\par
In this paper, we classify all root systems of smooth rational curves on Enriques surfaces
up to certain equivalence relation.
The list we obtain (see Theorem~\ref{thm:strongmain} and Table~\ref{table:main1}) includes,
of course,  the $31$ root systems 
of rank $9$ classified in~\cite{HKO2015} and~\cite{Schutt2015}.
Our method is purely lattice-theoretic and  algorithmic.
The main tool is the generalized 
Borcherds algorithm~(\cite{Borcherds1987},~\cite{Borcherds1998},~\cite{ShimadaIMRN}) 
to calculate the orthogonal group of a hyperbolic lattice.
An advantage of our method is that 
we can obtain the whole result by a single set of algorithms,
and that
we are exempted from the case-by-case investigation of possible root systems.
%\par
\par
\medskip
A lattice $L$ of rank $n$ is said to be \emph{hyperbolic} 
if the real quadratic space $L\tensor\R$ is of signature $(1, n-1)$.
Let $L$ be a hyperbolic lattice.
A \emph{positive cone} of $L$ is one of the two connected components
of $\shortset{x\in L\tensor\R}{\intf{x,x}>0}$.
Let $ \OG^+(L)$ denote the group of isometries of $L$ that preserve a positive cone.
\par
A \emph{root} of a lattice $L$ is a vector of square-norm $-2$.
We say that $L$ is a \emph{root lattice} if $L$ is generated by roots.
An \emph{$\ADE$-configuration of roots} of $L$ is a finite set $\Phi$ of roots of $L$
such that each connected component of the Dynkin diagram of $\Phi$ 
is of type $A_l$ ($l\ge 1$), $D_m$ ($m\ge 4$), or $E_n$ ($n=6,7,8$).
The \emph{$\ADE$-type $\tau(\Phi)$} of an $\ADE$-configuration $\Phi$ of roots
is the $\ADE$-type of  the Dynkin diagram of $\Phi$.
It is well-known that a negative-definite root lattice $R$ has an $\ADE$-configuration $\Phi_R$ of roots
that form a basis of $R$.
We define the $\ADE$-type $\tau(R)$ of $R$ to be $\tau(\Phi_R)$.
Conversely,
for  an $\ADE$-type  $t$,
%we denote by $R(\tau)$ a negative-definite root lattice
we have a negative-definite root lattice $R(t)$,
unique up to isomorphism, 
such that $\tau(R(t))= t$.
\par
Let $\Lten$ be an even unimodular hyperbolic lattice of rank $10$,
which is unique up to isomorphism,
and which has a basis $E_{10}:=\{e_1, \dots, e_{10}\} $ 
consisting of roots whose Dynkin diagram is given in Figure~\ref{fig:E10}.
\begin{figure}
\def\ha{40}
\def\hav{37}
\def\hd{25}
\def\hdv{22}
\def\he{10}
\def\hev{7}
\setlength{\unitlength}{1.5mm}
{\small
\begin{picture}(80,11)(-5, 6)
\put(22, 16){\circle{1}}
\put(23.5, 15.5){$e\sb 1$}
\put(22, 10.5){\line(0,1){5}}
\put(9.5, \hev){$e\sb 2$}
\put(15.5, \hev){$e\sb 3$}
\put(21.5, \hev){$e\sb 4$}
\put(27.5, \hev){$e\sb 5$}
\put(33.5, \hev){$e\sb 6$}
\put(39.5, \hev){$e\sb 7$}
\put(45.5, \hev){$e\sb {8}$}
\put(51.5, \hev){$e\sb {9}$}
\put(57.5, \hev){$e\sb {10}$}
%\put(50, \hev){$f-\theta$}
\put(10, \he){\circle{1}}
\put(16, \he){\circle{1}}
\put(22, \he){\circle{1}}
\put(28, \he){\circle{1}}
\put(34, \he){\circle{1}}
\put(40, \he){\circle{1}}
\put(46, \he){\circle{1}}
\put(52, \he){\circle{1}}
\put(58, \he){\circle{1}}
%\put(52, \he){\circle{1}}
\put(10.5, \he){\line(5, 0){5}}
\put(16.5, \he){\line(5, 0){5}}
\put(22.5, \he){\line(5, 0){5}}
\put(28.5, \he){\line(5, 0){5}}
\put(34.5, \he){\line(5, 0){5}}
\put(40.5, \he){\line(5, 0){5}}
\put(46.5, \he){\line(5, 0){5}}
\put(52.5, \he){\line(5, 0){5}}
%\put(66, \hev){$f:=f_U$}
\end{picture}
}
\caption{Dynkin diagram of the basis $E_{10}$ of $\Lten$}\label{fig:E10}
\end{figure}
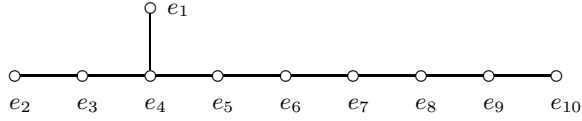
\par
For a smooth projective  surface $Z$, 
we denote by $S_Z$ the lattice of numerical equivalence classes of divisors of $Z$,
and by $\PPP_{Z}$ the positive cone of 
the hyperbolic lattice $S_Z$ containing an ample class.
Note that, if $Y$ is an Enriques surface, then  $\SY$ is isomorphic to $\Lten$.
\par
\medskip
An \emph{$\RDP$-Enriques surface}
is a pair $(Y, \rho)$ of an Enriques surface $Y$ and a birational morphism $\rho\colon Y\to \Ybar$
to a surface $\Ybar$ that has only rational double points as its singularities.
Let $(Y, \rho)$ be an $\RDP$-Enriques surface,
and let $C_1, \dots, C_n$ be the smooth rational curves contracted by $\rho$.
We denote  by $\Phi_{\rho}\subset \SY$ the set of classes of 
$C_1, \dots, C_n$.
Then  $\Phi_{\rho}$ is an $\ADE$-configuration of roots of $\SY$,
and $\tau(\Phi_{\rho})$ is the $\ADE$-type of the rational double points on $\Ybar$.
%(See~Section~\ref{sec:preliminaries} for the definitions.)
%
\begin{definition}
Two $\RDP$-Enriques surfaces 
$(Y, \rho)$ and $(Y\sprime, \rho\sprime)$
are said to be \emph{equivalent}
if there exists an isometry   $\SY\isom S_{Y\sprime}$ 
of lattices that maps $\PPP_{Y}$ to $\PPP_{Y\sprime}$ and 
$\Phi_{\rho}$ to $\Phi_{\rho\sprime}$ bijectively.
\end{definition}
In this paper, we classify all 
equivalence classes of $\RDP$-Enriques surfaces $(Y, \rho)$.
The $\ADE$-type $\tau(\Phi_{\rho})$
is a principal invariant of
the equivalence class,
but this invariant is not enough to distinguish  all the equivalence classes.
\par 
Our first main theorem  is the following purely lattice-theoretic result.
%First we classify all $\ADE$-configurations of roots in $\Lten$ up to the action of  $ \OG^+(\Lten)$.
Let $\Phi_{f}$ be an  $\ADE$-configuration of roots in $\Lten$.
We denote by $R_f$ the root sublattice of $\Lten$ generated by  $\Phi_f$, and by
 $\primR_f$ the primitive closure of 
$R_f$  in $\Lten$.
\begin{definition}\label{def:L10equiv1}
Two $\ADE$-configurations of  roots $\Phi_{f}\subset \Lten$ 
and $\Phi_{f\sprime}\subset  \Lten$
are said to be \emph{equivalent} 
if there exists an isometry $g\in \OG^+(\Lten)$ that maps $\Phi_f$ to $\Phi_{f\sprime}$
bijectively.  %(See also Definition~\ref{def:equivL10} for another formulation.) 
\end{definition}
\begin{theorem}\label{thm:L10main}
{\rm (1)}
For any  $\ADE$-configuration $\Phi_{f}$ of roots in $\Lten$, 
the lattice   $\primR_f$   is a root lattice.
%\par
%{\rm (2)}
Two $\ADE$-configurations of  roots $\Phi_{f}\subset \Lten$ 
and $\Phi_{f\sprime}\subset  \Lten$
are equivalent  if and only if
$\tau(\Phi_{f})=\tau(\Phi_{f\sprime})$ and  $\tau(\primR_f)=\tau(\primR_{f\sprime})$.
\par
{\rm (2)} Let $(t, \bar{t})$ be a pair of $\ADE$-types.
Then there exists an $\ADE$-configuration of  roots $\Phi_{f}\subset \Lten$
such that  $(\tau(\Phi_{f}), \tau(\primR_f))=({t}, \bar{t})$
if and only if the following hold;
\begin{enumerate}[{\rm (i)}]
\item $R({t})$ is of rank $< 10$, 
\item $R(\bar{t})$ is an even overlattice of $R({t})$, and 
\item the Dynkin diagram of $\bar{t}$ is a sub-diagram of the  Dynkin diagram %~{\rm (}Figure~\ref{fig:E10}{\rm )}
 of $E_{10}$.
\end{enumerate}
\end{theorem}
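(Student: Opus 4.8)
The plan is to derive the whole statement from the reflective geometry of $\Lten\isom E_{10}$. I will use Vinberg's theory, which shows that $\Lten$ is reflective: its Weyl group $W$, generated by the reflections in all roots, lies in $\OG^+(\Lten)$ and acts on the positive cone with a fundamental chamber whose walls are exactly the hyperplanes perpendicular to the basis $E_{10}=\{e_1,\dots,e_{10}\}$ of Figure~\ref{fig:E10}. After fixing the signs of the $e_i$, this chamber is $\overline D=\shortset{x\in\Lten\tensor\R}{\intf{x,x}\ge 0,\ \intf{x,e_i}\ge 0\ \text{for all } i}$, every positive-cone vector is $W$-equivalent into $\overline D$, and every root of $\Lten$ is $\pm$(a nonnegative integral combination of $e_1,\dots,e_{10}$).

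The engine of the proof is the following, which will give both the assertion in~(1) that $\primR_f$ is a root lattice and the necessity of~(iii). Put $M:=\primR_f$; it is negative-definite, and its roots span $M\tensor\R$ by construction. The orthogonal complement $M^\perp$ has signature $(1,9-\rank M)$, so I can choose $a\in M^\perp$ in the positive cone that is \emph{generic}, meaning the only roots of $\Lten$ orthogonal to $a$ are those in $M\tensor\Q$; such $a$ exists because the finitely many hyperplanes $r^\perp$, for roots $r\notin M\tensor\Q$, meet $M^\perp\tensor\R$ in proper subspaces (when $\rank M=9$ the complement is positive-definite of rank one and genericity is automatic). Pick $w\in W$ with $w(a)\in\overline D$ and set $I:=\shortset{i}{\intf{e_i,w(a)}=0}$. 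Since $\intf{e_i,w(a)}\ge 0$, any root $r=\sum c_i e_i$ with all $c_i\ge 0$ and $\intf{r,w(a)}=0$ must be supported on $I$; hence the roots of $w(M)$, which are precisely the roots orthogonal to $w(a)$, all lie in the sublattice $P_I:=\langle e_i : i\in I\rangle$, while conversely each $e_i$ with $i\in I$ is such a root. Thus the root sublattice of $w(M)$ equals $P_I$. As $P_I$ is spanned by part of a basis of the unimodular lattice $\Lten$ it is primitive, and since $w(M)$ is negative-definite with $w(M)\tensor\Q=P_I\tensor\Q$ (the roots of $w(M)$ spanning this space), primitivity forces $w(M)=P_I$. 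Applying $w^{-1}$ shows $\primR_f\isom P_I$ is a root lattice, proving~(1), and that $\tau(\primR_f)$ is the type of the subdiagram $I$ of $E_{10}$, which is the necessity of~(iii). Necessity of~(i) is the rank bound for negative-definite sublattices of a lattice of signature $(1,9)$, and necessity of~(ii) is immediate from $R_f\subseteq\primR_f$.

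For sufficiency in~(2), condition~(iii) lets me realize $R(\bar t)$ as a standard sublattice $P_I=\langle e_i:i\in I\rangle$, where $I$ spans a subdiagram of type $\bar t$; this embedding is automatically primitive. Condition~(ii) supplies a finite-index inclusion $R(t)\hookrightarrow R(\bar t)=P_I$, and I take $\Phi_f$ to be the image of a root basis of $R(t)$. Then $R_f\isom R(t)$ has $\ADE$-type $t$, and because $P_I$ is primitive and $R_f\tensor\Q=P_I\tensor\Q$, the primitive closure $\primR_f$ equals $P_I\isom R(\bar t)$; hence $(\tau(\Phi_f),\tau(\primR_f))=(t,\bar t)$, as wanted.

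It remains to prove the \emph{if} direction of the equivalence in~(1), which I expect to be the main obstacle; the \emph{only if} direction is routine, since any $g\in\OG^+(\Lten)$ carrying $\Phi_f$ to $\Phi_{f\sprime}$ carries $R_f$ to $R_{f\sprime}$ and hence $\primR_f$ to $\primR_{f\sprime}$, preserving both types. For the converse, the previous paragraphs let me assume, after applying elements of $\OG^+(\Lten)$, that $\primR_f$ and $\primR_{f\sprime}$ are standard sublattices of the common type $\bar t$. First I must show that any two primitive embeddings of $R(\bar t)$ into $\Lten$ are conjugate under $\OG^+(\Lten)$; this follows from Nikulin's theory of primitive embeddings into unimodular lattices, once one checks that the orthogonal complement is unique in its genus and that $\OG$ of the complement surjects onto the orthogonal group of its discriminant form. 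These hold for the indefinite complements of rank $\ge 3$ and must be verified directly in the finitely many small-corank cases. Having arranged $\primR_f=\primR_{f\sprime}=:P$, it suffices to show that the image in $\mathrm{Aut}(P)$ of the stabilizer of $P$ in $\OG^+(\Lten)$ acts transitively on the $\ADE$-configurations of type $t$ in $P$ whose span is all of $P\tensor\Q$. The Weyl group $W(P)\subseteq W\subseteq\OG^+(\Lten)$ is available, so the heart of the matter is that two such \emph{spanning} sub-configurations of the same type $t$ in the root lattice $R(\bar t)$ are conjugate under $W(P)$ together with the diagram automorphisms that extend to $\Lten$. This last transitivity, which is exactly what makes the pair $(t,\bar t)$ a complete invariant, is the delicate combinatorial point where I expect the real work to lie.
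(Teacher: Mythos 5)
Your handling of the first claim of (1) and of part (2) is sound, and it is essentially the paper's own argument: Lemma~\ref{lem:primR} likewise uses Vinberg's theorem to move the cone orthogonal to $R_f$ into the chamber $\Delta_0$ and identifies $\primR_f$ with $\gen{\Sigma}$ for a subset $\Sigma\subset E_{10}$; your generic vector $a$ is the same mechanism in different clothing. (One small slip: there are infinitely many roots outside $M\tensor\Q$, so the existence of the generic $a$ rests on local finiteness of the root-hyperplane arrangement in the positive cone, not on finiteness; this is easily repaired.)

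The genuine gap is the ``if'' direction of the equivalence in (1), which you yourself leave open. Your reduction to the two statements --- (a) transitivity of $\OG^+(\Lten)$ on primitive negative-definite root sublattices of a fixed $\ADE$-type, and (b) transitivity of the image of $\Stab(\gen{\Sigma},\Lten)$ on spanning type-$t$ configurations inside $\gen{\Sigma}$ --- is the correct skeleton, but neither statement is proved, and neither follows from soft theory. For (a), the appeal to Nikulin is insufficient even where you claim it works: for example, the type $5A_1$ is realized primitively (e.g.\ by $\{e_1,e_3,e_5,e_7,e_9\}$), and its orthogonal complement is indefinite of rank $5$ with discriminant group $(\Z/2\Z)^5$ of length $5>5-2$, so Nikulin's uniqueness and surjectivity criteria do not apply; the corank-$8$ and corank-$9$ cases (rank-$2$ and rank-$1$ complements) are worse, and these occur throughout Table~\ref{table:main1}. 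The paper instead proves (a) as Theorem~\ref{thm:TTT}, by running the generalized Borcherds algorithm (Algorithm~\ref{algo:main}) over all $1021$ subsets $\Sigma\in\Sigmas$. For (b), the acting group is not $W(\gen{\Sigma})\rtimes\Aut(\Sigma)$ but only $W(\gen{\Sigma})$ extended by the subgroup $H_{\Sigma}\subset\Aut(\Sigma)$ of diagram automorphisms that extend to $\Lten$ (see~\eqref{eq:HSigma}); determining $H_{\Sigma}$ requires exactly the stabilizer generators $\GGG_{\Sigma}$ produced by that same algorithm, and the transitivity is then the conjunction of the computational Theorems~\ref{thm:OL} (root-lattice overlattices of equal type are $\Aut(\Phi)$-conjugate) and~\ref{thm:emb} (the double coset $H_{\Phi}\backslash\Aut(\Sigma)/H_{\Sigma}$ is a singleton). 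Neither is a formal consequence of root-system combinatorics: a priori the double coset could have several elements, in which case $(\tau(\Phi_f),\tau(\primR_f))$ would fail to be a complete invariant and the theorem would be false as stated. So the ``delicate point'' you defer is not a finishing touch --- it is the actual content of the theorem, and your proposal does not establish it.
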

There exist exactly $184$ equivalence classes of 
 $\ADE$-configurations of roots in $\Lten$.
They are given in Table~\ref{table:main1}.
%where the columns $\Phi$ and $\Sigma$ denote the $\ADE$-types of $\Phi_f$ and $\primR_f$,
%respectively.
In the case where $R_f=\primR_f$,
the item $\tau(\primR_f)$ is simply denoted by $\Phi$.
%Thus we obtain the following:
%
\begin{corollary}\label{cor:main1}
Let $(Y, \rho)$ be an $\RDP$-Enriques surface, and 
$R_{\rho}$  the sublattice of $\SY$ generated by the set $\Phi_{\rho}$ of  classes of smooth rational curves contracted by $\rho$.
Then 
the primitive closure $\primR_{\rho}$ of $R_{\rho}$
in $\SY$
 is a root lattice.
 % and hence its $\ADE$-type $\tau(\primR_{\rho})$
%is defined.
%Let $(Y\sprime, \rho\sprime)$ be another $\RDP$-Enriques surface.
%Then 
Two $\RDP$-Enriques surfaces $(Y, \rho)$ and $(Y\sprime, \rho\sprime)$ are equivalent  if and only if
$\tau(\Phi_{\rho})=\tau(\Phi_{\rho\sprime})$ and $\tau(\primR_{\rho})=\tau(\primR_{\rho\sprime})$.
\end{corollary}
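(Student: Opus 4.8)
The plan is to deduce the Corollary from Theorem~\ref{thm:L10main} by transporting all the data through a lattice isometry $\SY\isom\Lten$, which exists because $\SY$ is an even unimodular hyperbolic lattice of rank $10$ for every Enriques surface $Y$. Since $\Phi_{\rho}$ is, by the discussion preceding Definition, an $\ADE$-configuration of roots in $\SY$, its image under such an isometry is an $\ADE$-configuration of roots in $\Lten$, and every quantity appearing in the Corollary is invariant under isometry; so the task reduces to matching the data and invoking Theorem~\ref{thm:L10main}.

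First I would fix, for each Enriques surface under consideration, an isometry $\phi\colon\SY\isom\Lten$. A hyperbolic lattice has exactly two positive cones, interchanged by the isometry $x\mapsto -x$; hence, after composing $\phi$ with this isometry if necessary, I may assume that $\phi$ carries the positive cone $\PPP_Y$ to one fixed positive cone $\PPP$ of $\Lten$. Writing $\Phi_f:=\phi(\Phi_{\rho})$, an isometry preserves square-norms, Dynkin diagrams, the sublattice generated by a given set, and primitive closures; therefore $\tau(\Phi_f)=\tau(\Phi_{\rho})$ and $\phi(\primR_{\rho})=\primR_f$, whence $\tau(\primR_f)=\tau(\primR_{\rho})$. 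Part (1) is then immediate: $\primR_f$ is a root lattice by Theorem~\ref{thm:L10main}(1), and being a root lattice is preserved by the inverse isometry $\phi^{-1}$, so $\primR_{\rho}$ is a root lattice.

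For part (2), the \emph{only if} direction is formal: an equivalence of $(Y,\rho)$ and $(Y\sprime,\rho\sprime)$ is by definition realized by an isometry $\SY\isom S_{Y\sprime}$ carrying $\Phi_{\rho}$ bijectively to $\Phi_{\rho\sprime}$, and such an isometry preserves both the $\ADE$-type and the primitive closure, giving $\tau(\Phi_{\rho})=\tau(\Phi_{\rho\sprime})$ and $\tau(\primR_{\rho})=\tau(\primR_{\rho\sprime})$. For the \emph{if} direction I would choose isometries $\phi\colon\SY\isom\Lten$ and $\phi\sprime\colon S_{Y\sprime}\isom\Lten$ as above, both sending their ample positive cones to the same positive cone $\PPP$. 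Then $\Phi_f:=\phi(\Phi_{\rho})$ and $\Phi_{f\sprime}:=\phi\sprime(\Phi_{\rho\sprime})$ are $\ADE$-configurations of roots in $\Lten$ with $\tau(\Phi_f)=\tau(\Phi_{f\sprime})$ and $\tau(\primR_f)=\tau(\primR_{f\sprime})$, so by the equivalence criterion of Theorem~\ref{thm:L10main}(1) there is $g\in\OG^+(\Lten)$ mapping $\Phi_f$ bijectively onto $\Phi_{f\sprime}$. The composite ${\phi\sprime}^{-1}\circ g\circ\phi\colon\SY\isom S_{Y\sprime}$ then carries $\Phi_{\rho}$ bijectively to $\Phi_{\rho\sprime}$, and since $\phi$, $g$, and ${\phi\sprime}^{-1}$ each respect the chosen positive cones, it carries $\PPP_Y$ to $\PPP_{Y\sprime}$; this is precisely an equivalence of $\RDP$-Enriques surfaces.

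The argument is essentially a bookkeeping transport of Theorem~\ref{thm:L10main}, so I expect no deep obstacle. The single point requiring care is the consistent treatment of positive cones: one must ensure that the auxiliary isometries $\phi,\phi\sprime$ and the element $g\in\OG^+(\Lten)$ are all compatible with one choice of positive cone of $\Lten$, so that the final composite lies in the group respecting the ample cones and hence genuinely witnesses the equivalence.
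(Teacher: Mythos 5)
Your proposal is correct and is essentially the paper's own (implicit) argument: the paper states Corollary~\ref{cor:main1} as an immediate consequence of Theorem~\ref{thm:L10main}, using the isomorphism $\SY\cong\Lten$ to transport $\Phi_{\rho}$, its generated sublattice, and its primitive closure, exactly as you do. Your extra care in normalizing the positive cones (composing with $x\mapsto -x$ if needed, and noting that $g\in\OG^+(\Lten)$ respects the chosen cone) is precisely the bookkeeping the paper leaves to the reader.
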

%
%The finite abelian group $\primR_{\rho}/R_{\rho}$
%can be used to distinguish 
%the topological type of the smooth part $\Ybar\spcirc:=\Ybar\setminus \Sing(\Ybar)$ of $\Ybar$.
%In~\cite{KeumZhang2002},
%Keum and Zhang studied  configurations of $\ADE$-type $c A_{l}$,
%and the topological fundamental groups $\pi_1(\Ybar\spcirc)$.
%Rams and Sch\"utt~\cite{RamsSchuett2014} corrected a result of~\cite{KeumZhang2002}
%and showed that there exist at least three equivalence classes 
%of $\RDP$-Enriques surfaces with $\tau(\Phi_{\rho})=4A_2$ that are distinguished by 
%$\pi_1(\Ybar\spcirc)$.
%
%
% MainTable.tex
%
\begin{table}
$$
{\small
\hskip -1cm
\begin{array}{llll}
\textrm{No.} \phantom{aa} & \tau(\Phi_f) & \tau(\primR_f) &  Q_{(Y, \rho)}  \mystrutd{6pt}\\ 
\hline
1&A_{1}&\Phi&0 \mystruth{12pt}\\
2&2A_{1}&\Phi&0\\
3&A_{2}&\Phi&0\\
4&3A_{1}&\Phi&0\\
5&A_{1}+A_{2}&\Phi&0\\
6&A_{3}&\Phi&0\\
7&4A_{1}&\Phi&0\\
8&4A_{1}&D_{4}&0, 2\\
9&2A_{1}+A_{2}&\Phi&0\\
10&A_{1}+A_{3}&\Phi&0\\
11&2A_{2}&\Phi&0\\
12&A_{4}&\Phi&0\\
13&D_{4}&\Phi&0\\
14&5A_{1}&\Phi&0\\
15&5A_{1}&A_{1}+D_{4}&0, 2\\
16&3A_{1}+A_{2}&\Phi&0\\
17&2A_{1}+A_{3}&\Phi&0\\
18&2A_{1}+A_{3}&D_{5}&0, 2\\
19&A_{1}+2A_{2}&\Phi&0\\
20&A_{1}+A_{4}&\Phi&0\\
21&A_{1}+D_{4}&\Phi&0\\
22&A_{2}+A_{3}&\Phi&0\\
23&A_{5}&\Phi&0\\
24&D_{5}&\Phi&0\\
25&6A_{1}&2A_{1}+D_{4}&0, 2\\
26&6A_{1}&D_{6}&2, 22\\
27&4A_{1}+A_{2}&\Phi&0\\
28&4A_{1}+A_{2}&A_{2}+D_{4}&0, 2\\
29&3A_{1}+A_{3}&\Phi&0\\
30&3A_{1}+A_{3}&A_{1}+D_{5}&0, 2\\
31&2A_{1}+2A_{2}&\Phi&0\\
32&2A_{1}+A_{4}&\Phi&0\\
33&2A_{1}+D_{4}&\Phi&0\\
34&2A_{1}+D_{4}&D_{6}&0, 2\\
35&A_{1}+A_{2}+A_{3}&\Phi&0\\
36&A_{1}+A_{5}&\Phi&0\\
37&A_{1}+A_{5}&E_{6}&0, 2\\
38&A_{1}+D_{5}&\Phi&0\\
39&3A_{2}&\Phi&0, 3\\
40&3A_{2}&E_{6}&0\\
41&A_{2}+A_{4}&\Phi&0\\
42&A_{2}+D_{4}&\Phi&0\\
43&2A_{3}&\Phi&0\\
44&2A_{3}&D_{6}&0, 2, 2\\
45&A_{6}&\Phi&0\\
46&D_{6}&\Phi&0\\
47&E_{6}&\Phi&0\\
48&7A_{1}&A_{1}+D_{6}&2\\
49&7A_{1}&E_{7}&22\\
50&5A_{1}+A_{2}&A_{1}+A_{2}+D_{4}&0
\end{array}
\hskip .5cm%\hskip -1cm
\begin{array}{llll}
\textrm{No.}  \phantom{aa} &\tau(\Phi_f) & \tau(\primR_f) &  Q_{(Y, \rho)}   \mystrutd{6pt}\\ 
\hline
51&4A_{1}+A_{3}&2A_{1}+D_{5}&0, 2 \mystruth{12pt}\\
52&4A_{1}+A_{3}&A_{3}+D_{4}&0, 2\\
53&4A_{1}+A_{3}&D_{7}&2, 2, 2, 22\\
54&3A_{1}+2A_{2}&\Phi&0\\
55&3A_{1}+A_{4}&\Phi&0\\
56&3A_{1}+D_{4}&A_{1}+D_{6}&0, 2\\
57&3A_{1}+D_{4}&E_{7}&2, 22\\
58&2A_{1}+A_{2}+A_{3}&\Phi&0\\
59&2A_{1}+A_{2}+A_{3}&A_{2}+D_{5}&0, 2\\
60&2A_{1}+A_{5}&\Phi&0\\
61&2A_{1}+A_{5}&A_{1}+E_{6}&0, 2\\
62&2A_{1}+D_{5}&\Phi&0\\
63&2A_{1}+D_{5}&D_{7}&0, 2\\
64&A_{1}+3A_{2}&\Phi&0, 3\\
65&A_{1}+3A_{2}&A_{1}+E_{6}&0\\
66&A_{1}+A_{2}+A_{4}&\Phi&0\\
67&A_{1}+A_{2}+D_{4}&\Phi&0\\
68&A_{1}+2A_{3}&\Phi&0\\
69&A_{1}+2A_{3}&A_{1}+D_{6}&0, 2, 2, 4\\
70&A_{1}+2A_{3}&E_{7}&0, 2\\
71&A_{1}+A_{6}&\Phi&0\\
72&A_{1}+D_{6}&\Phi&0\\
73&A_{1}+D_{6}&E_{7}&0, 2\\
74&A_{1}+E_{6}&\Phi&0\\
75&2A_{2}+A_{3}&\Phi&0\\
76&A_{2}+A_{5}&\Phi&0, 3\\
77&A_{2}+A_{5}&E_{7}&0\\
78&A_{2}+D_{5}&\Phi&0\\
79&A_{3}+A_{4}&\Phi&0\\
80&A_{3}+D_{4}&\Phi&0\\
81&A_{3}+D_{4}&D_{7}&0, 2, 2\\
82&A_{7}&\Phi&0\\
83&A_{7}&E_{7}&0, 2, 2\\
84&D_{7}&\Phi&0\\
85&E_{7}&\Phi&0\\
86&8A_{1}&A_{1}+E_{7}&-\\
87&8A_{1}&D_{8}&22\\
88&8A_{1}&E_{8}&222\\
89&6A_{1}+A_{2}&A_{2}+D_{6}&-\\
90&5A_{1}+A_{3}&A_{1}+D_{7}&2\\
91&4A_{1}+A_{4}&A_{4}+D_{4}&0\\
92&4A_{1}+D_{4}&A_{1}+E_{7}&2\\
93&4A_{1}+D_{4}&D_{8}&2, 2\\
94&4A_{1}+D_{4}&E_{8}&22, 22\\
95&3A_{1}+A_{2}+A_{3}&A_{1}+A_{2}+D_{5}&0\\
96&3A_{1}+A_{5}&2A_{1}+E_{6}&0, 2\\
97&3A_{1}+D_{5}&A_{1}+D_{7}&0\\
98&2A_{1}+3A_{2}&2A_{1}+E_{6}&0\\
99&2A_{1}+A_{2}+A_{4}&\Phi&0\\
100&2A_{1}+A_{2}+D_{4}&A_{2}+D_{6}&0
\end{array}
}
$$
\vskip .5cm
\caption{$\ADE$-configurations of roots in $\Lten$ (continues)}
\label{table:main1}
\end{table}
\setcounter{table}{0}
\begin{table}
$$
\hskip -1cm
{\small
\begin{array}{llll}
\textrm{No.}  \phantom{aa} &\tau(\Phi_f) & \tau(\primR_f)&  Q_{(Y, \rho)}  \mystrutd{6pt} \\ 
\hline
101&2A_{1}+2A_{3}&A_{1}+E_{7}&0, 2, 4 \mystruth{12pt}\\
102&2A_{1}+2A_{3}&A_{3}+D_{5}&0, 2\\
103&2A_{1}+2A_{3}&D_{8}&2, 2, 2, 22, 22, 4, 42\\
104&2A_{1}+2A_{3}&E_{8}&2, 2, 2, 22\\
105&2A_{1}+A_{6}&\Phi&0\\
106&2A_{1}+D_{6}&A_{1}+E_{7}&0, 2\\
107&2A_{1}+D_{6}&D_{8}&0, 2\\
108&2A_{1}+D_{6}&E_{8}&2, 2, 2, 22\\
109&2A_{1}+E_{6}&\Phi&0\\
110&A_{1}+2A_{2}+A_{3}&\Phi&0\\
111&A_{1}+A_{2}+A_{5}&\Phi&0, 3\\
112&A_{1}+A_{2}+A_{5}&A_{1}+E_{7}&0\\
113&A_{1}+A_{2}+A_{5}&A_{2}+E_{6}&0, 2, 3, 6\\
114&A_{1}+A_{2}+A_{5}&E_{8}&0, 2\\
115&A_{1}+A_{2}+D_{5}&\Phi&0\\
116&A_{1}+A_{3}+A_{4}&\Phi&0\\
117&A_{1}+A_{3}+D_{4}&A_{1}+D_{7}&0, 2\\
118&A_{1}+A_{7}&\Phi&0\\
119&A_{1}+A_{7}&A_{1}+E_{7}&0, 2, 2, 4, 4\\
120&A_{1}+A_{7}&E_{8}&0, 2, 2\\
121&A_{1}+D_{7}&\Phi&0\\
122&A_{1}+E_{7}&\Phi&0\\
123&A_{1}+E_{7}&E_{8}&0, 2\\
124&4A_{2}&A_{2}+E_{6}&0, 3\\
125&4A_{2}&E_{8}&0\\
126&2A_{2}+A_{4}&\Phi&0\\
127&A_{2}+2A_{3}&A_{2}+D_{6}&0, 2\\
128&A_{2}+A_{6}&\Phi&0\\
129&A_{2}+D_{6}&\Phi&0\\
130&A_{2}+E_{6}&\Phi&0, 3\\
131&A_{2}+E_{6}&E_{8}&0\\
132&A_{3}+A_{5}&\Phi&0\\
133&A_{3}+D_{5}&\Phi&0\\
134&A_{3}+D_{5}&D_{8}&0, 2, 2, 4\\
135&A_{3}+D_{5}&E_{8}&0, 2\\
136&2A_{4}&\Phi&0, 5\\
137&2A_{4}&E_{8}&0\\
138&A_{4}+D_{4}&\Phi&0\\
139&A_{8}&\Phi&0, 3\\
140&A_{8}&E_{8}&0\\
141&2D_{4}&D_{8}&0, 2\\
142&2D_{4}&E_{8}&2, 2, 22
\end{array}
\hskip .5cm%\hskip -1cm
\begin{array}{llll}
\textrm{No.} \phantom{aa} & \tau(\Phi_f) & \tau(\primR_f) &  Q_{(Y, \rho)} \mystrutd{6pt}\\ 
\hline
143&D_{8}&\Phi&0 \mystruth{12pt}\\
144&D_{8}&E_{8}&0, 2, 2\\
145&E_{8}&\Phi&0\\
146&9A_{1}&A_{1}+E_{8}&-\\
147&7A_{1}+A_{2}&A_{2}+E_{7}&-\\
148&6A_{1}+A_{3}&D_{9}&-\\
149&5A_{1}+D_{4}&A_{1}+E_{8}&-\\
150&4A_{1}+D_{5}&D_{9}&-\\
151&3A_{1}+A_{2}+D_{4}&A_{2}+E_{7}&-\\
152&3A_{1}+2A_{3}&A_{1}+E_{8}&2\\
153&3A_{1}+D_{6}&A_{1}+E_{8}&2\\
154&2A_{1}+A_{2}+A_{5}&A_{1}+E_{8}&0\\
155&2A_{1}+A_{3}+A_{4}&A_{4}+D_{5}&0\\
156&2A_{1}+A_{3}+D_{4}&D_{9}&2, 2\\
157&2A_{1}+A_{7}&A_{1}+E_{8}&0, 2, 4\\
158&2A_{1}+D_{7}&D_{9}&0\\
159&2A_{1}+E_{7}&A_{1}+E_{8}&0\\
160&A_{1}+4A_{2}&A_{1}+E_{8}&-\\
161&A_{1}+A_{2}+2A_{3}&A_{2}+E_{7}&0\\
162&A_{1}+A_{2}+A_{6}&\Phi&0\\
163&A_{1}+A_{2}+D_{6}&A_{2}+E_{7}&0\\
164&A_{1}+A_{2}+E_{6}&A_{1}+E_{8}&0\\
165&A_{1}+A_{3}+A_{5}&A_{3}+E_{6}&0, 2\\
166&A_{1}+A_{3}+D_{5}&A_{1}+E_{8}&0\\
167&A_{1}+2A_{4}&A_{1}+E_{8}&0\\
168&A_{1}+A_{8}&\Phi&0, 3\\
169&A_{1}+A_{8}&A_{1}+E_{8}&0\\
170&A_{1}+2D_{4}&A_{1}+E_{8}&2\\
171&A_{1}+D_{8}&A_{1}+E_{8}&0, 2\\
172&A_{1}+E_{8}&\Phi&0\\
173&3A_{2}+A_{3}&A_{3}+E_{6}&0\\
174&2A_{2}+A_{5}&A_{2}+E_{7}&0, 3\\
175&A_{2}+A_{7}&A_{2}+E_{7}&0, 2\\
176&A_{2}+E_{7}&\Phi&0\\
177&3A_{3}&D_{9}&2, 22\\
178&A_{3}+D_{6}&D_{9}&0, 2\\
179&A_{3}+E_{6}&\Phi&0\\
180&A_{4}+A_{5}&\Phi&0\\
181&A_{4}+D_{5}&\Phi&0\\
182&A_{9}&\Phi&0\\
183&D_{4}+D_{5}&D_{9}&0\\
184&D_{9}&\Phi&0\\
\end{array}
}
$$
\vskip .5cm
\caption{$\ADE$-configurations of roots in $\Lten$ (continued)}
\label{table:main2}
\end{table}
\par
Our next problem  is to determine all 
equivalence classes of 
 $\ADE$-configurations $\Phi_f\subset \Lten$
that can be realized as the $\ADE$-configuration $\Phi_{\rho}\subset\SY\cong\Lten$
associated with an $\RDP$-Enriques surface $(Y, \rho)$.
%For this purpose,
%we  divide each equivalence class of $\RDP$-Enriques surfaces  into smaller sub-classes.
Let  $(Y, \rho)$ be an $\RDP$-Enriques surface,
and let $C_1, \dots, C_n$ be the smooth rational curves on $Y$ contracted by $\rho$,
so that $\Phi_{\rho}=\{[C_1], \dots, [C_n]\}$.
Let $\pi\colon X\to Y$ denote the universal covering of $Y$,
and $\enrinvol\colon X\to X$ the deck-transformation of the double covering $\pi$.
Then $\pi^*\colon \SY\to \SX$ is injective,
and the image $ \pi^*\SY$ is equal to the invariant sublattice in $\SX$
of the action  of  $\enrinvol$ in $\SX$.
The pull-back 
of $C_i$ by $\pi$ splits into the disjoint union 
 of two smooth rational curves 
$C_i\sprime$ and  $C_i\spprime$ on $X$.
We put 
$$
\Phi^{\sim}_{\rho}:=\{[C_1\sprime], [C_1\spprime], \dots, [C_n\sprime], [C_n\spprime]\}
\;\;\subset\;\; \SX,
$$
that is, $\Phi^{\sim}_{\rho}$ is the set of classes of smooth rational curves on $X$ contracted
by $\rho\circ\pi\colon X\to\Ybar$.
We denote by $M_{\rho}$ the sublattice of $\SX$ generated  by $ \pi^*\SY$ 
and $\Phi^{\sim}_{\rho}$.
Then the rank of $M_{\rho}$ is equal to $10+n$.
We then denote by $\primM_{\rho}$ the primitive closure of $M_{\rho}$ in $\SX$.
Note that the action of $\enrinvol$ on $S_X$ preserves the sublattices $M_{\rho}$ and $\primM_{\rho}$.
We put
$$
Q_{(Y, \rho)}:=\primM_{\rho}/ M_{\rho}.
$$
\begin{definition}\label{def:stronglyequivalent}
Let $(Y\sprime, \rho\sprime)$ be another $\RDP$-Enriques surface
with the universal covering $\pi\sprime\colon X\sprime\to Y\sprime$.
%and the deck-transformation $\enrinvol\sprime\colon X\sprime\to X\sprime$ of $\pi\sprime$.
We say that $(Y, \rho)$ and $(Y\sprime, \rho\sprime)$ are
\emph{strongly equivalent} if there exists an isometry
$$
\mu\colon \primM_{\rho}\isom \primM_{\rho\sprime}
$$
with the following properties;
the isometry $\mu$ maps $\pi^*S_Y$ to $\pi^{\prime*} S_{Y\sprime}$
isomorphically, and the isometry $\mu_Y\colon S_Y\isom S_{Y\sprime}$ 
induced by $\mu$ maps $\PPP_{Y}$ to $\PPP_{Y\sprime}$
and $\Phi_{\rho}$ to $\Phi_{\rho\sprime}$ bijectively.
An isometry $\mu\colon \primM_{\rho}\isom \primM_{\rho\sprime}$  satisfying 
these conditions is called a \emph{strong-equivalence isometry}.
 \end{definition}
 It is obvious that,  if $(Y, \rho)$ and $(Y\sprime, \rho\sprime)$ are strongly equivalent,
 then they are  equivalent.
 It is also obvious that a strong-equivalence isometry $\primM_{\rho}\isom \primM_{\rho\sprime}$ is compatible
 with the actions of the Enriques involutions on $\primM_{\rho}$ and on $ \primM_{\rho\sprime}$.
The following lemma is proved in Section~\ref{subsec:latticesassociatedwith}.
\begin{lemma}\label{lem:QYrho}
Let $\mu\colon \primM_{\rho}\isom \primM_{\rho\sprime}$ be a strong-equivalence isometry.
Then  $\mu$ maps $\Phi^{\sim}_{\rho}$ to $\Phi^{\sim}_{\rho\sprime}$
bijectively.
In particular, 
if $(Y, \rho)$ and $(Y\sprime, \rho\sprime)$ are
strongly equivalent, then we have 
$Q_{(Y, \rho)}\cong Q_{(Y\sprime, \rho\sprime)}$.
\end{lemma}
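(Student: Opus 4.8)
The plan is to give an intrinsic, lattice-theoretic description of $\Phi^{\sim}_{\rho}$ that uses only the data preserved by a strong-equivalence isometry $\mu$ --- the lattice $\primM_{\rho}$, its invariant sublattice $\pi^{*}\SY$, the involution induced by $\enrinvol$, and the classes $\pi^{*}[C_i]$ coming from $\Phi_{\rho}$ --- after which the two assertions follow formally.

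First I would record the geometry behind the construction. As $\pi$ is \'etale of degree $2$ and $\enrinvol$ is fixed-point-free, each $C_i$ pulls back to the disjoint pair $C_i\sprime\sqcup C_i\spprime$, so that
$$
[C_i\sprime]+[C_i\spprime]=\pi^{*}[C_i]=:v_i\in\pi^{*}\SY,\qquad \intf{[C_i\sprime],[C_i\spprime]}=0,
$$
and $\enrinvol^{*}$ interchanges $[C_i\sprime]$ and $[C_i\spprime]$. Since $\pi^{*}$ multiplies the form by $2$ we have $\intf{v_i,v_i}=-4$, hence $[C_i\sprime]$ has $\enrinvol^{*}$-invariant part $v_i/2$ and anti-invariant part $a_i/2$, where $a_i:=[C_i\sprime]-[C_i\spprime]$ satisfies $\intf{a_i,a_i}=-4$. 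Next, as observed after Definition~\ref{def:stronglyequivalent}, $\mu$ is compatible with the two Enriques involutions, and $\mu_Y$ maps $\Phi_{\rho}$ to $\Phi_{\rho\sprime}$; consequently $\mu(v_i)=\pi^{\prime*}(\mu_Y([C_i]))$ is again the pullback of an element of $\Phi_{\rho\sprime}$.

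The heart of the matter is the claim that for each $i$
$$
\{\,[C_i\sprime],\,[C_i\spprime]\,\}=\shortset{x\in\primM_{\rho}}{\intf{x,x}=-2,\ x+\enrinvol^{*}x=v_i};
$$
that is, the two curve classes are the only roots of $\primM_{\rho}$ whose invariant part is $v_i/2$. Writing such an $x$ as $x=[C_i\sprime]+w$ with $w:=x-[C_i\sprime]$ lying in the anti-invariant sublattice $N:=\shortset{v\in\primM_{\rho}}{\enrinvol^{*}v=-v}$, the condition $\intf{x,x}=-2$ becomes $\intf{w,w}+\intf{a_i,w}=0$. As $N$ is negative definite (the invariant part $\pi^{*}\SY$ already accounts for the positive part of the signature of $\primM_{\rho}$), a Cauchy--Schwarz estimate gives $-4\le\intf{w,w}\le 0$, and since $N$ is even this leaves only $\intf{w,w}\in\{0,-2,-4\}$; the value $0$ gives $w=0$ and hence $x=[C_i\sprime]$, while the equality case $\intf{w,w}=-4$ forces $w=-a_i$ and hence $x=[C_i\spprime]$.

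The main obstacle is therefore to exclude the intermediate value $\intf{w,w}=-2$, i.e.\ a spurious root of $N$ producing an extra solution. The decisive point is that $N$ contains no root at all: the pullback $\pi^{*}h$ of an ample class $h$ on $Y$ is ample on $X$ because $\pi$ is finite, and it is $\enrinvol^{*}$-invariant, hence orthogonal to every anti-invariant class; an anti-invariant root would then have zero intersection with an ample class, which is impossible. With the claim in hand, the displayed set depends only on $\mu$-invariant data, so $\mu$ carries the set attached to $v_i$ to the one attached to $\mu(v_i)$, giving $\mu(\Phi^{\sim}_{\rho})=\Phi^{\sim}_{\rho\sprime}$ bijectively. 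Finally, since $M_{\rho}$ is generated by $\pi^{*}\SY$ and $\Phi^{\sim}_{\rho}$, this yields $\mu(M_{\rho})=M_{\rho\sprime}$, and $\mu\colon\primM_{\rho}\isom\primM_{\rho\sprime}$ then induces the isomorphism $Q_{(Y,\rho)}\cong Q_{(Y\sprime,\rho\sprime)}$.
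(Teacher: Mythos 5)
Your proof is correct, and it shares the paper's overall strategy: both arguments rest on the equivariance of $\mu$ with respect to the two Enriques involutions together with an intrinsic characterization of $\Phi^{\sim}_{\rho}$ inside $\primM_{\rho}$, and your key claim is precisely the paper's Lemma~\ref{lem:Phirhotilde}, stated curve by curve. Where you genuinely diverge is in how that characterization is proved. The paper argues geometrically: for a root $r\sprime$ of $\primM_{\rho}$ with $r\sprime+r^{\prime\enrinvol}=\pi^*[C_i]$, the positivity $\intf{r\sprime,\pi^*a}>0$ produces an effective divisor $D\sprime$ on $X$ with $[D\sprime]=r\sprime$, whose image on $Y$ is forced to be $C_i$, whence $r\sprime\in\Phi^{\sim}_{\rho}$. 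You argue lattice-theoretically: writing $x=[C_i\sprime]+w$ with $w$ anti-invariant, negative definiteness, Cauchy--Schwarz, evenness, and the absence of roots in the anti-invariant part force $w=0$ or $w=-a_i$. This is in substance the argument the paper itself uses for a different statement, Lemma~\ref{lem:twolifts} (a liftable root has exactly two lifts), which rests on Lemma~\ref{lem:Npi}; indeed, a root $x$ of $\primM_{\rho}$ with $x+x^{\enrinvol}=\pi^*[C_i]$ automatically satisfies $\intf{x,x^{\enrinvol}}=0$, hence is a lift of $[C_i]$ with $t=x-x^{\enrinvol}\in\TTT_{Y}$, so your claim could also be quoted directly from that lemma. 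What your route buys is an argument that stays inside $\primM_{\rho}$ with a single geometric input (no anti-invariant class is a root, via ampleness and Riemann--Roch on the $K3$ cover); what the paper's route buys is brevity, since the ample class $a$ and the formalism of splitting divisors are already set up. Your concluding steps --- equivariance of $\mu$, and the observation that $M_{\rho}$ is generated by $\pi^*\SY$ and $\Phi^{\sim}_{\rho}$, so that $\mu(M_{\rho})=M_{\rho\sprime}$ and hence $Q_{(Y,\rho)}\cong Q_{(Y\sprime,\rho\sprime)}$ --- agree with the paper's.
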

Our second main result is as follows.
\begin{theorem}\label{thm:strongmain}
There exist exactly $265$ strong equivalence classes of $\RDP$-Enriques surfaces $(Y, \rho)$ with $\Phi_{\rho}\ne \emptyset$.
The invariants $\tau(\Phi_{\rho})$, $\tau(\primR_{\rho})$,
and $Q_{(Y, \rho)}$ are given
in Table~\ref{table:main1}.
\end{theorem}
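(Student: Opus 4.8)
The plan is to combine the lattice-theoretic classification of Theorem~\ref{thm:L10main} with a geometric realization argument on the K3 cover, reducing the entire statement to a finite, effective lattice computation performed by the generalized Borcherds algorithm. Theorem~\ref{thm:L10main} already supplies the complete list of the $184$ $\OG^+(\Lten)$-equivalence classes of $\ADE$-configurations $\Phi_f\subset\Lten$, each labelled by the pair $(\tau(\Phi_f),\tau(\primR_f))$. For each such class I would first confirm that it is realized as $\Phi_{\rho}$ for some $\RDP$-Enriques surface $(Y,\rho)$, identifying $\SY$ with $\Lten$, and then classify the strong-equivalence classes lying over it by analyzing the lattice $\primM_{\rho}\subset\SX$ together with the action of the Enriques involution $\enrinvol$.

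For realizability I would use the Torelli theorem and the surjectivity of the period map for Enriques surfaces. Having fixed the marking $\SY\isom\Lten$ and a positive cone $\PPP_Y$, the set of smooth rational curves contracted by $\rho$ is controlled by the tessellation of $\PPP_Y$ into induced chambers, obtained from the Conway chambers via a primitive embedding into the even unimodular hyperbolic lattice of signature $(1,25)$; running the generalized Borcherds algorithm enumerates the $(-2)$-classes bounding a fundamental chamber, which I would match against each $\Phi_f$. On the cover, each contracted curve lifts to a disjoint pair, so realizing $\Phi_f$ amounts to producing a period of the K3 cover $X$ for which exactly the classes of $\Phi^{\sim}_{\rho}\subset\SX$ are smooth rational curves, $\enrinvol$ is fixed-point-free, and a class ample on $\Ybar$ exists; by surjectivity of the period map this reduces to an $\enrinvol$-equivariant primitive embedding of the relevant lattice into the K3 lattice, a Nikulin-type discriminant-form problem, together with a genericity condition guaranteeing no spurious smooth rational curves.

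Given realizability, the strong-equivalence refinement is read off on the K3 side. Since $M_{\rho}$ is generated by $\pi^*\SY\cong\Lten(2)$ and the lifts $\Phi^{\sim}_{\rho}$, with $\enrinvol$ interchanging the two components of each lift, I would determine the primitive closure $\primM_{\rho}$ inside $\SX$, compute $Q_{(Y,\rho)}=\primM_{\rho}/M_{\rho}$ with its induced gluing, and classify the resulting lattices-with-involution-and-configuration up to strong-equivalence isometry. By Lemma~\ref{lem:QYrho} the group $Q_{(Y,\rho)}$ is a genuine strong-equivalence invariant, so it serves to separate most of the classes lying over a common pair $(\tau(\Phi_{\rho}),\tau(\primR_{\rho}))$. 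Summing the numbers of strong-equivalence classes over the $184$ equivalence classes yields the total $265$, and recording $\tau(\Phi_{\rho})$, $\tau(\primR_{\rho})$, and $Q_{(Y,\rho)}$ produces Table~\ref{table:main1}.

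The main obstacle lies in two places. First, realizability must be established uniformly and sharply: one must verify not merely that a configuration embeds, but that it occurs as \emph{exactly} the set of smooth rational curves meeting the contracted locus, with no additional effective $(-2)$-class either forcing extra curves or obstructing the existence of a class ample on $\Ybar$, and with $\enrinvol$ fixed-point-free, each of which is a delicate condition on the period that the Borcherds chamber enumeration must certify. Second, the $\enrinvol$-equivariant classification of the overlattices $\primM_{\rho}$ is subtle, since configurations sharing the same $(\tau(\Phi_{\rho}),\tau(\primR_{\rho}))$ can split into several strong-equivalence classes; the argument must both separate these classes, so that the count is exactly $265$, and verify that the recorded invariant $Q_{(Y,\rho)}$, alongside the pair $(\tau(\Phi_{\rho}),\tau(\primR_{\rho}))$, suffices to label them. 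It is precisely the finiteness and effectivity of the generalized Borcherds algorithm that render this classification decidable.
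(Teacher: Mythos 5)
Your proposal is, in substance, the paper's own strategy: start from the $184$ classes of Theorem~\ref{thm:L10main}; for each representative $f$ encode the K3 cover in the lattice generated by $\pi^*\SY$ and the lifted curve classes; settle realizability via the period map, the Torelli theorem, and Nikulin's theory of involutions; and obtain the count by classifying the admissible overlattices together with their involution, the whole procedure being made effective by Borcherds-type chamber computations. Two corrections are needed, though neither destroys this architecture. First, your Conway-chamber step is not a valid realizability test, and the paper does not use it: on an Enriques surface a root of $\SY$ need not be $\pm$effective (Riemann--Roch gives $\chi=0$ rather than $1$ for a class of square $-2$), so which roots are classes of smooth rational curves is invisible to $\SY\cong\Lten$ alone --- it depends on which roots lift to the covering K3 --- and consequently no chamber enumeration in $\Lten$, nor in the signature-$(1,25)$ lattice, can certify that $\Phi_f$ is exactly the contracted configuration. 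The paper replaces this by a purely lattice-theoretic criterion: $f$ is geometrically realizable if and only if the explicitly constructed lattice $M_f$ admits an even overlattice $\primM_{f}$ satisfying the four conditions $\condone$--$\condfour$ (Theorems~\ref{thm:geom1},~\ref{thm:geom2} and~\ref{thm:geom3}). This is precisely what the second half of your realizability paragraph amounts to, so the Conway-chamber detour should be deleted rather than repaired; note also that condition $\condfour$ is a condition on the overlattice itself, not a ``genericity'' condition on the period, since in the construction the marking identifies $\SX$ with $\primM_{f}$.

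Second, the number $265$ is not obtained by separating strong equivalence classes with the invariant $Q_{(Y, \rho)}$: that invariant is merely recorded (its invariance is Lemma~\ref{lem:QYrho}), and the theorem does not claim that the triple of invariants is complete. The actual count comes from Corollary~\ref{cor:Usprime}, which gives a bijection between the strong equivalence classes over a fixed $f$ and the orbits of the group $U(M_f)$ --- whose generators are supplied by Proposition~\ref{prop:U} and require the computation of $\Stab(\Phi_f,\Lten)$ in Section~\ref{subsec:stabPhiL}, which is where the Borcherds-type chamber-walking actually enters this theorem --- acting on the finite set of even overlattices of $M_f$ satisfying $\condone$--$\condfour$ (enumerated via totally isotropic subgroups of the discriminant form, with $\condone$ checked by the genus criterion of Proposition~\ref{prop:genus}). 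Summing these orbit counts over the $184$ classes yields $265$; your plan reaches the same bijection implicitly when you classify ``lattices-with-involution-and-configuration up to strong-equivalence isometry,'' but making the count rigorous requires exactly this orbit formulation rather than the invariant $Q_{(Y, \rho)}$.
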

In Table~\ref{table:main1},
the group $Q_{(Y, \rho)}$ is written in the following abbreviations:
$$
0=\{0\},
\;\;
n=\Z/n\Z\;\;(n\le 6),
\;\;
22=(\Z/2\Z)^2,
\;\;
222=(\Z/2\Z)^3,
\;\;
42=\Z/4\Z\times \Z/2\Z.
$$
When the fourth column $Q_{(Y, \rho)}$  is empty ($-$),
%it means that 
there exist no $\RDP$-Enriques surfaces $(Y, \rho)$
such that $\tau(\Phi_{\rho})$ and $\tau(\primR_\rho)$ are given in
the second and the third columns.
\begin{corollary}\label{cor:175}
There exist exactly $175$ equivalence classes of $\RDP$-Enriques surfaces
 $(Y, \rho)$ with $\Phi_{\rho}\ne \emptyset$.
\end{corollary}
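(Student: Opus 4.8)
The plan is to deduce the corollary as a bookkeeping consequence of Corollary~\ref{cor:main1} and Theorem~\ref{thm:strongmain}, reading the final number off Table~\ref{table:main1}.

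First I would use the coarser classification. By Corollary~\ref{cor:main1}, two $\RDP$-Enriques surfaces $(Y, \rho)$ and $(Y\sprime, \rho\sprime)$ with $\Phi_{\rho}, \Phi_{\rho\sprime}\ne\emptyset$ are equivalent if and only if $\tau(\Phi_{\rho})=\tau(\Phi_{\rho\sprime})$ and $\tau(\primR_{\rho})=\tau(\primR_{\rho\sprime})$. Hence the assignment
$$
(Y, \rho)\;\longmapsto\;(\tau(\Phi_{\rho}),\, \tau(\primR_{\rho}))
$$
descends to an injection from the set of equivalence classes into the set of pairs of $\ADE$-types, whose image is exactly the set of pairs $(t, \bar t)$ that are realized by at least one $\RDP$-Enriques surface. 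Thus counting equivalence classes is the same as counting realized pairs.

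Next I would locate the realized pairs inside Table~\ref{table:main1}. Since $\SY\cong\Lten$ and $\Phi_{\rho}\subset\SY$ is an $\ADE$-configuration of roots, Theorem~\ref{thm:L10main} shows that every pair $(\tau(\Phi_{\rho}), \tau(\primR_{\rho}))$ occurs among the $184$ equivalence classes of $\ADE$-configurations in $\Lten$ tabulated there. By Theorem~\ref{thm:strongmain} and the stated reading convention for the table, the pairs actually realized by an $\RDP$-Enriques surface are precisely those whose $Q_{(Y, \rho)}$-entry is nonempty, the entries marked $-$ recording the lattice-theoretically admissible but geometrically unrealizable pairs. Equivalently, the $265$ strong equivalence classes correspond bijectively to the triples $(\tau(\Phi_{\rho}), \tau(\primR_{\rho}), Q_{(Y, \rho)})$ with nonempty $Q$, and the forgetful map dropping $Q$ carries this set onto the set of realized pairs; its fibers record how a single coarse type may split into several strong classes (as for the three classes of type $4A_2$ of Rams--Sch\"utt).

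Finally I would carry out the count. Projecting the $265$ triples onto their first two coordinates and removing repetitions --- equivalently, counting those rows of Table~\ref{table:main1} whose $Q$-column is nonempty, after identifying rows that agree in both $\tau(\Phi_{\rho})$ and $\tau(\primR_{\rho})$ --- leaves exactly $175$ distinct pairs, matching the $184-9=175$ realizable types. The only labor here is this tally, which is mechanical once the table is in hand; the sole point demanding care, and the closest thing to an obstacle, is not to overcount a coarse pair $(t,\bar t)$ that is split by several values of $Q$, nor to include any pair whose $Q$-entry is uniformly $-$. This establishes the asserted $175$ equivalence classes.
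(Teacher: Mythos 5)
Your proposal is correct and is essentially the paper's own (implicit) argument: by Corollary~\ref{cor:main1} the equivalence classes are exactly the geometrically realized pairs $(\tau(\Phi_{\rho}),\tau(\primR_{\rho}))$, and by Theorem~\ref{thm:strongmain} together with the stated reading convention for Table~\ref{table:main1} these are precisely the rows whose $Q_{(Y,\rho)}$-column is nonempty, of which there are $175$. One parenthetical slip worth noting: the Rams--Sch\"utt $4A_2$ example concerns a single $\ADE$-type $\tau(\Phi_{\rho})$ splitting into several \emph{equivalence} classes (distinct values of $\tau(\primR_{\rho})$), not a single pair $(t,\bar{t})$ splitting into several strong classes, so it does not actually illustrate a fiber of your forgetful map; the paper's $2A_1+2A_3$ example (four equivalence classes refining into $3+2+7+4=16$ strong classes) is the right illustration.
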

\begin{example}
There exist no $\RDP$-Enriques surfaces $(Y, \rho)$ with singularities of type
 $\tau(\Phi_{\rho})=6A_1+A_2$.
\end{example}
\begin{example}
$\RDP$-Enriques surfaces $(Y, \rho)$ with singularities of type
 $\tau(\Phi_{\rho})=2A_1+2A_3$ are divided into $4$ equivalence classes 
 with
 $$
 \tau(\primR_\rho)= A_1 +E_7, \;\; A_3+D_5, \;\; D_8,\;\; E_8, 
 $$
 and into $3+2+7+4=16$ strong equivalence classes.
\end{example}
In~\cite{KeumZhang2002},
Keum and Zhang studied configurations $\AAA$ of 
smooth rational curves  on an Enriques surface $Y$
with $\ADE$-type
$cA_{p-1}$,
where $p$ is a prime,
and investigated 
the topological fundamental group $\pione (Y\setminus \AAA)$.
In~\cite{RamsSchuett2014},
Rams and Sch\"utt studied the case of $4A_2$,
and corrected the result of~\cite{KeumZhang2002}.
The relation of  the isomorphism classes of 
$\pione (Y\setminus \AAA)$
and our notion of strong equivalence relation
is not clear. 
We observe, however,  the following  fact 
by comparing 
our Table~\ref{table:main1}
with their results 
(Theorem~2 and Table~2 of~\cite{KeumZhang2002} and Theorem~4.3 of~\cite{RamsSchuett2014}):
Except for the case of the  $8A_1$ (and possibly for the case of the  $7A_1$),
the number of the isomorphism classes of 
$\pione (Y\setminus \AAA)$ and 
the number of strong equivalence classes coincide.
For example,  consider the case of $7A_1$.
The number of the  isomorphism classes of 
$\pione (Y\setminus \AAA)$ is two or three
(the authors of~\cite{KeumZhang2002} did not determine 
the realizability of the case  
$\pione (Y\setminus \AAA)\cong (\Z/2\Z)^4$),
whereas we have two strong equivalence classes
(Nos.~48 and~49).
Therefore we guess that the case $\pione (Y\setminus \AAA)\cong (\Z/2\Z)^4$
is not realizable.
For another example,  consider the case of $4A_2$.
Theorem~4.3 of~\cite{RamsSchuett2014}
says that there exist three  isomorphism classes of 
$\pione (Y\setminus \AAA)$,
two of which are
realized by  the same Enriques surface.
We also have $3=2+1$ strong equivalence classes
(Nos.~124 and~125).
For the case of  $8A_1$,
Table~2~of~\cite{KeumZhang2002} indicates 
only one 
isomorphism class of 
$\pione (Y\setminus \AAA)$,
whereas  we have two strong equivalence classes
(Nos.~87 and~88).

%
%\begin{itemize}
%\item 
%The 3rd and the 4th column of Table~2 of~\cite{KeumZhang2002}
%show whether 
% the sublattice $\gen{\Phi_{\rho}}$ of $S_Y$ 
%and the sublattice $\gen{\Phi^{\sim}_{\rho}}$ of $S_X$ 
%are primitive or not.
%There seem to be plenty of typos in these columns.
%\item Except for the case of the  $8A_1$,
%the number of the isomorphism classes of 
%$\pione (Y\setminus \AAA)$ and 
%the number of strong equivalence classes coincide.
%For example,  consider the case of $7A_1$.
%The number of the  isomorphism classes of 
%$\pione (Y\setminus \AAA)$ is $2$ or $3$
%(the authors of~\cite{KeumZhang2002} did not determine 
%the realizability of the case  
%$\pione (Y\setminus \AAA)\cong (\Z/2\Z)^4$),
%whereas we have $2$ strong equivalence classes
%(Nos.~48 and~49).
%For another example,  consider the case of $4A_2$.
%Theorem~4.3 of~\cite{RamsSchuett2014}
%says that there exist $3$  isomorphism classes of 
%$\pione (Y\setminus \AAA)$,
%and   we also have $3$ strong equivalence classes
%(Nos.~124 and~125).
%\item 
%For the case of the $\ADE$-type $8A_1$,
%Table~2~of~\cite{KeumZhang2002} indicates 
%only one 
%isomorphism class of 
%$\pione (Y\setminus \AAA)$,
%whereas  we have $2$ strong equivalence classes
%(Nos.~87 and~88).
%\end{itemize}
%%
%It would be  nice
%if the relation between 
%$\pione (Y\setminus \AAA)$ 
%and the strong equivalence class of $(Y, \AAA)$
%are clarified.
%
\par
An important class of involutions of  $K3$ surfaces 
other than Enriques involutions comes from the sextic double plane models.
The classification of $\ADE$-types of rational double points
on normal  sextic double planes
was given by Yang~\cite{Yang1996}.
%The classification of $\ADE$-types of rational double points
%on  normal $K3$ surfaces in general was given in~\cite{Shimada2007}.
%This work also includes  an investigation of 
%rational double points on normal \emph{supersingular} $K3$ surfaces.
A finer classification of  rational double points
on normal sextic double planes was given in~\cite{Shimada2010}
in the relation with the topology of Zariski pairs of plane curves (see~\cite{Artal1994},~\cite{Shimada2008AGEA}).
This classification was further refined  to 
the complete description of connected components 
of the equisingular families  of irreducible sextic plane curves with only simple singularities
by Akyol and Degtyarev~\cite{AkyolDegtyarev2015}.
%The same description was given for the connected components 
%of the moduli of certain quartic surfaces in~\cite{Aktas} and of  elliptic $K3$ surfaces in~\cite{ShimadaEll}.
The present article may be regarded as the Enriques  counterpart of
these studies of  plane sextic curves.
\par
This paper is organized as follows.
In Section~\ref{sec:preliminaries},
we review preliminary results about lattices.
Some algorithms for negative-definite root lattices are presented,
and the notion of chambers in a hyperbolic lattice 
is introduced.
In Section~\ref{sec:Lten}, 
we study the  lattice $\Lten$ in detail,
and prove Theorem~\ref{thm:L10main}
by the generalized Borcherds algorithm~(\cite{Borcherds1987},~\cite{Borcherds1998},~\cite{ShimadaIMRN}).
The classical result due to  Vinberg~\cite{Vinberg1975} plays an important role.
In Section~\ref{sec:geom},
we give a method to enumerate all the strong equivalence classes of $\RDP$-Enriques surfaces,
explain how to carry out this method, 
and prove Theorem~\ref{thm:strongmain}. 
%The main tool is the Torelli theorem 
%for $K3$ surfaces and the surjectivity of the period map of  $K3$ surfaces.
%(see~\cite[Chapter VIII]{BHPVBook}).
%In Section~\ref{sec:example},
%we investigate some examples in detail.
%
\par
For the computation,
we used GAP~\cite{GAP}.
A computational data is available from the author's webpage~\cite{EnrRcompdata}.
In particular, the isomorphism class of the lattice $\primM_{\rho}$
for each strongly equivalence class of $\RDP$-Enriques surfaces
is given explicitly in~\cite{EnrRcompdata}.
Since we have $\primM_{\rho}=\SX$
when $(Y, \rho)$ corresponds to a general  point of 
an irreducible component of the moduli of $\RDP$-Enriques surfaces
with a fixed $\ADE$-type, 
this data will be useful in the study of the moduli  %and the automorphism groups 
of $\RDP$-Enriques surfaces.
\par
\medskip
The author thanks Professors Matthias~Sch\"utt and Hisanori~Ohashi 
for many discussions.
Thanks are also due to the referees 
for many helpful comments on the first version of the manuscript.
\section{Preliminaries}\label{sec:preliminaries}
%
%%%%%%%%%%%%%%%%%%
%
\subsection{An $\ADE$-configuration}\label{subsec:ADEconfig}
A \emph{Dynkin configuration} is a finite set $\Phi$ with a map
$$
\intfvoid\colon \Phi\times \Phi\to \{-2,0,1\}
$$
such that $\intf{x, y}=\intf{y, x}$ for all $x, y\in \Phi$,
$\intf{x, x}=-2$ for all $x\in \Phi$, and
$\intf{x, y}\in \{0, 1\}$ for all $x,y\in \Phi$ with $x\ne y$.
With a Dynkin configuration $\Phi=\{r_1, \dots, r_n\}$,
we associate  its \emph{Dynkin diagram},
which is a graph whose set of vertices is $\Phi$ 
and whose set of edges is the set of pairs $\{r_i, r_j\}$ such that $\intf{r_i, r_j}=1$.
We say that a Dynkin configuration  is an \emph{$\ADE$-configuration}
if every connected component of its Dynkin diagram is of type
$A_{l}\; (l\ge 1)$, $D_m \;(m\ge 4)$, or $E_n\;  (n=6,7,8)$.
%(See Figure~\ref{fig:ADE}.)
%An \emph{$\ADE$-type} is a finite formal sum of the symbols
%$A_{l}$, $D_m$, and $E_n$.
We define the \emph{$\ADE$-type} $\tau(\Phi)$ of an $\ADE$-configuration $\Phi$ 
to be the sum of the types of the connected components of its Dynkin diagram.
Let $\Phi$ and $\Phi\sprime$ be Dynkin configurations.
An isomorphism from $\Phi$  to $\Phi\sprime$ is a bijection $\Phi\isom \Phi\sprime$  that preserves 
$\intfvoid$.
The isomorphism class of an $\ADE$-configuration is uniquely determined by its $\ADE$-type.
We denote by $\Aut(\Phi)$ the group of automorphisms of $\Phi$,
which we let act on $\Phi$ from the \emph{right}.
%
%
%
%%%%%%%%%%%%%%%%%%
\subsection{A lattice}\label{subsec:lattice}
%%%%%%%%%%%%%%%%%%
%
Let $L$ be a free $\Z$-module of finite rank,
and $R$ a submodule of $L$.
The \emph{primitive closure} $\primR$ of $R$ in $L$ is 
the intersection of $R\tensor\Q$ and $L$ in $L\tensor\Q$.
We say that $R$ is \emph{primitive in $L$} if $R=\primR$.
%
%
%%%%%%%%%%%%%%%%%%
%
\par
%
%%%%%%%%%%%%%%%%%%
%

A \emph{lattice} is a free $\Z$-module $L$ of finite rank with a non-degenerate symmetric bilinear form
$$
\intfvoid\colon L\times L\to \Z.
$$
Let $L$ be a lattice.
We say that  $L$ is \emph{even} if $\intf{x, x}\in 2\Z$ for all $x\in L$.
The group of isometries of  $L$ is denoted by $\OG(L)$.
We let $\OG(L)$ act on $L$ from the \emph{right}.
An \emph{embedding} of a Dynkin configuration $\Phi$ into  $L$ is an injection $\Phi\inj L$
that preserves $\intfvoid$.
We define the \emph{dual lattice} $L\dual$ of $L$ by 
 $$
 L\dual:=\set{v\in L\tensor\Q}{\intf{x, v}\in \Z\;\; \textrm{for all}\;\; x\in L}.
 $$
The finite abelian group $\discg{L}:=L\dual/L$ is called the \emph{discriminant group} of $L$.
The group $\OG(L)$ acts on $\discg{L}$ from the right.
We say that  $L$ is \emph{unimodular} if $\discg{L}$ is trivial.
The \emph{signature} of  $L$ is the signature of the real quadratic space $L\tensor\R$.
Suppose that  $L$ is of rank $n>0$.
We say that  $L$  is \emph{hyperbolic} if the signature is $(1, n-1)$,
and is \emph{negative-definite} if the signature is $(0, n)$.
%
%
%%%%%%%%%%%%%%%%%%
\subsection{Roots and reflections}\label{subsec:roots}
%%%%%%%%%%%%%%%%%%
%
Let $L$ be an even lattice.
A \emph{root} of $L$ is a vector $r\in L$ such that $\intf{r, r}=-2$.
The set of roots of $L$ is denoted by  $\Roots (L)$.
A root $r$ of $L$ defines an isometry
$$
\refl{r}\colon x\mapsto x+\intf{x, r}r
$$
of $L$, which is called the \emph{reflection} associated with  $r$.
We denote by $W(L)$ the subgroup of $\OG(L)$ generated by
all the reflections associated with the roots, 
and call it the \emph{Weyl group} of $L$.
We say that $L$ is a \emph{root lattice} if $L$ is generated by roots.
Let $\Phi$ be a subset of $\Roots(L)$.
We denote by $W(\Phi, L)$
the subgroup of $W(L)$ generated by all the reflections $s_r$  associated with  $r\in \Phi$.
%
%
%%%%%%%%%%%%%%%%%%
%
\subsection{A negative-definite root lattice}\label{subsec:negdefroot}
%
%%%%%%%%%%%%%%%%%%
%
%It is well-known that 
Let $\Phi$ be an $\ADE$-configuration.
Extending $\intfvoid\colon\Phi\times\Phi\to \Z$ by linearity 
to the bilinear form on the free $\Z$-module generated by $\Phi$,
we obtain  a  negative-definite root lattice  of rank $|\Phi|$,
which we will denote by $\gen{\Phi}$.
Conversely,
let $R$ be a negative-definite root lattice.
Then  $R$
has a basis $\Phi_R$ consisting of roots 
that form an $\ADE$-configuration,
which we call an \emph{$\ADE$-basis} of $R$.
We define the $\ADE$-type $\tau(R)$ of  $R$
to be  the $\ADE$-type $\tau(\Phi_R)$ of an $\ADE$-basis $\Phi_R$ of $R$.
In the following, 
we describe the set of all $\ADE$-bases of a negative-definite root lattice.
See~\cite[Chapter~1]{EbelingBook} or~\cite[Chapter~1]{HumphreysBook} for the proof.
\par
Let $R$ be a negative-definite root lattice.
For a root $r$ of $R$, we denote by  $r\sperp$ the hyperplane of $R\tensor\R$
defined by $\intf{x, r}=0$.
We then put
$$
(R\tensor\R)\spcirc:=(R\tensor\R)\;\setminus\; \bigcup\, r\sperp,
$$
where $r$ runs through the finite set $\Roots(R)$. 
Let $\Gamma$ be a connected component of $(R\tensor\R)\spcirc$,
and $\closure{\Gamma}$ the closure of $\Gamma$ in $R\tensor\R$.
Then the set $\Phi_{\Gamma}$ of all $r\in \Roots(R)$ such that
$\intf{x, r}>0$ for any $x\in \Gamma$ and that
$r\sperp\cap \closure{\Gamma}$ contains a non-empty open subset of $r\sperp$
form an $\ADE$-basis of $R$,
and the mapping $\Gamma\mapsto \Phi_{\Gamma}$ gives a bijection from the set of connected components 
of $(R\tensor\R)\spcirc$ to the set of $\ADE$-bases of $R$.
For $x\in (R\tensor\R)\spcirc$, we denote by 
$\Gamma(x)$ the connected component of $(R\tensor\R)\spcirc$ containing $x$.
Let  $\Phi=\{r_1, \dots, r_n\}$ be an $\ADE$-basis of $R$.
We put
$$
c:=r_1\dual+\cdots+r_n\dual\;\;\in\;\;(R\tensor\R)\spcirc,
$$
where $r_1\dual, \dots, r_n\dual$ are the basis of $R\dual$ dual to 
the basis $r_1, \dots, r_n$ of $R$.
Then the connected component of $(R\tensor\R)\spcirc$ 
corresponding to the $\ADE$-basis $\Phi$ is $\Gamma(c)$.
It is obvious that we have a natural embedding
$\Aut(\Phi)\inj \OG(R)$ whose image is
$$
\Stab (\Gamma(c), R):=\set{g\in \OG(R)}{\Gamma(c)^g=\Gamma(c)}=\set{g\in \OG(R)}{c^g=c}.
$$
Since $W(R)$ acts on the set of connected components of $(R\tensor\R)\spcirc$ simple-transitively,
we have a \emph{splitting}  exact sequence
\begin{equation}\label{eq:WRORkappa}
1\;\maprightsp{}\; W(R)\;\maprightsp{}\; \OG(R)\;\maprightsp{\kappa}\; \Aut(\Phi)\;\maprightsp{} \; 1.
\end{equation}
\begin{algorithm}\label{algo:Gammauv}
Let $u$ and $v$ be points of $(R\tensor \Q)\cap (R\tensor\R)\spcirc$.
This algorithm finds the unique element $g\in W(R)$
that maps $\Gamma(u)$ to $\Gamma(v)$.
Let $\xi$ be a sufficiently general element of $R\tensor\Q$,
and let $\varepsilon$ be a sufficiently small positive rational number.
We consider the open  line segment in $R\tensor\R$ drawn by the point
$$
p(t):=u+ t (v+\varepsilon \xi), 
$$
where $t$ moves in the set of positive real numbers.
Let $\{r_1, \dots, r_N\}$ be the set of roots $r_i$ of $R$
such that $\intf{u,r_i}<0$ and  $\intf{v,r_i}>0$.
For each $r_i$,
let  $t_i$ be the unique rational number such that
$\intf{p(t_i), r_i}=0$.
Since the perturbation vector $\xi$ is general,
we can assume that $t_1, \dots, t_N$ are distinct.
We sort $r_1, \dots, r_N$ in such a way that $t_1<\dots<t_N$.
Then $g:=s_{r_1}\dots s_{r_N}\in W(R)$ satisfies $\Gamma(u)^g=\Gamma(v)$.
\end{algorithm}
As applications of Algorithm~\ref{algo:Gammauv},
we obtain the following algorithms.
%Suppose  that $\Phi=\{r_1, \dots, r_n\}$.
As noted above,  the stabilizer subgroup $\Stab (\Gamma(c), R)$  is %of $\Gamma(w)$ in $\OG(R)$ is
canonically  identified with $\Aut(\Phi)$.
\begin{algorithm}\label{algo:phi}
Let an isometry $g\in \OG(R)$ be given.
The image $\kappa (g)\in \Aut(\Phi)$ of $g$ by the homomorphism
$\kappa$ in~\eqref{eq:WRORkappa}  is calculated by 
applying Algorithm~\ref{algo:Gammauv} to $u=c^g$ and $v=c$.
We find $h\in W(R)$ such that $\Gamma(c)^{gh}=\Gamma(c)$.
Hence we have $\kappa(g)=gh$.
\end{algorithm}
\begin{algorithm}\label{algo:oppR}
Applying Algorithm~\ref{algo:Gammauv} to $u=c$ and $v=-c$,
we find the element $l\in W(R)$ such that $\Gamma(c)^{l}=-\Gamma(c)$.
This element $l$ is the \emph{longest element} of the Coxeter group $W(R)$. 
(See~\cite[Section~1.8]{HumphreysBook}.)
\end{algorithm}
\par
Another method to obtain an $\ADE$-basis of $R$ is as follows.
We put
$$
\Hom(R, \R)\spcirc:=\set{\ell \in \Hom(R, \R)}{\ell (r)\ne 0\;\;\textrm{ for any $r\in \Roots(R)$}}, 
$$
and for $\ell \in \Hom(R, \R)\spcirc$, we put $\Roots(R)_{\ell>0}:=\shortset{r\in \Roots(R)}{\ell(r)>0}$.
\begin{definition}\label{def:indecomp}
Let $S$ be a subset of $\Roots(R)_{\ell>0}$.
We say that $r\in S$ is \emph{indecomposable in $S$}
if $r$ is not written as a linear combination $\sum a_i r_i$
of elements $r_i\in S$ with $a_i\in \Z_{\ge 0}$ such that $\sum a_i>1$.
%Note that, by definition,
%every element of $S$ is a linear combination of indecomposable elements of $S$.
\end{definition}
Let  $\Phi_{\ell>0}$  be  the set of roots $r\in \Roots(R)_{\ell>0}$ indecomposable in $\Roots(R)_{\ell>0}$.
Then  $\Phi_{\ell>0}$ 
is an $\ADE$-basis  of $R$, and the mapping $\ell\mapsto \Phi_{\ell>0}$
 gives a bijection from the set of connected components 
of $\Hom(R, \R)\spcirc$ to the set of $\ADE$-bases of $R$.
This correspondence $\ell\mapsto \Phi_{\ell>0}$
will be used in Section~\ref{sec:geom}.
%
%%%%%%%%%%%%%%%%%%
%
\subsection{An even hyperbolic  lattice}\label{subsec:hyperbolic}
%
%%%%%%%%%%%%%%%%%%
%
Let $L$ be an even hyperbolic lattice.
A \emph{positive cone} is one of the two  connected components
of the space $\shortset{x\in L\tensor\R}{\intf{x, x}>0}$.
Let $\PPP$ be a positive cone.
We denote by $\OG^+(L)$ the stabilizer subgroup of $\PPP$ in $\OG(L)$.
For a non-zero vector $v\in L\tensor\R$, we put
%$$
%[v]\sperp:=\set{x\in L\tensor\R}{\intf{v, x}=0},
%$$
%and
$$
H^+_v:=\set{x\in \PPP}{\intf{v, x}\ge 0},
\quad  
(v)\sperp:=\set{x\in \PPP}{\intf{v, x}=0}.
$$
Then $(v)\sperp\ne \emptyset$ if and only if $\intf{v,v}<0$.
Note that the Weyl group $W(L)$ acts on $\PPP$.
A \emph{standard fundamental domain of the action  of $W(L)$ on $\PPP$} is 
the closure in $\PPP$ of a connected component of
$$
\PPP\;\;\setminus\;\; \bigcup\; (r)\sperp,
$$
where $r$ runs through $\Roots(L)$.
Then $W(L)$ acts on the set of standard fundamental domains simple-transitively.
Let $\Delta$ be a standard fundamental domain.
We put
$\Aut(\Delta):=\shortset{g\in \OG^+(L)}{\Delta^g=\Delta}$.
Then we have a \emph{splitting} exact sequence
\begin{equation}\label{eq:splithyp}
1\;\maprightsp{}\; W(L)\;\maprightsp{}\; \OG^+(L)\;\maprightsp{}\; \Aut(\Delta)\;\maprightsp{}\; 1,
\end{equation}
and we have a tessellation
\begin{equation}
\PPP=\bigcup_{g\in W(L)}\; \Delta^{g}.
\label{eq:tess}
\end{equation}
%
%where $\gamma\in W(L)$.
%
%%%%%%%%%%%%%%%%%%
%
\subsection{Chambers}\label{subsec:chambers}
%
%%%%%%%%%%%%%%%%%%
%
Let $L$ be an even hyperbolic lattice, and $\PPP$ a positive cone of $L$.
A closed subset $\CCC$ of $\PPP$ is called a \emph{chamber} if 
the interior $\CCC\spcirc$ of $\CCC$ in $\PPP$ is non-empty
and there exists a set of vectors $\HHH\subset L\tensor\R$  
such that the family of hyperplanes $\shortset{(v)\sperp}{v\in \HHH}$ of $\PPP$
is locally finite in $\PPP$ and that 
\begin{equation}\label{eq:CCC}
\CCC=\bigcap_{v\in \HHH} H^+_v.
\end{equation}
Let $\CCC$ be a chamber defined by a set of vectors $\HHH\subset L\tensor\R$ as in~\eqref{eq:CCC}.
A  closed subset $F$ of $\CCC$ is called a \emph{face} of $\CCC$ if
$F\ne \emptyset$, $F\cap \CCC\spcirc=\emptyset$,  and 
there exists a subset $\HHH_F$ of $\HHH$ such that
$$
F=\CCC\cap  \bigcap_{v\in \HHH_F} (v)\sperp.
$$
If $F$ is a face, then we have a  unique  linear subspace $V$ of $L\tensor\R$
such that $V\cap F$ contains a non-empty open subset of $V$.
We say that $V$ or $\PPP\cap V$  the \emph{supporting linear subspace} of $F$.
The \emph{codimension} of a face is defined to be the codimension of 
its supporting linear subspace in $L\tensor\R$ or in $\PPP$.
A face of codimension $1$ is called a \emph{wall}.
Let $F$ be a wall of $\CCC$.
A vector $v\in L\tensor\R$ is said to \emph{define the wall} $F$ if 
 $\CCC$ is contained in $H^+_{v}$ and $F= \CCC\cap (v)\sperp$ holds.
%
%%%%%%%%%%%%%%%%%%
%
\subsection{The discriminant form and overlattices}\label{subsec:discform}
Let $L$ be an even lattice.
Recall that $\discg{L}:=L\dual/L$.
Then the natural $\Q$-valued symmetric bilinear form on $L\dual$ defines
a finite quadratic form
$$
\discf{L}\colon \discg{L}\to \Q/2\Z,
$$
which we call the \emph{discriminant form} of $L$.
See Nikulin~\cite{Nikulin1979} for the basic properties of the discriminant form.
We have a natural homomorphism
$\OG(L)\to \OG(\discf{L})$,
where $\OG(\discf{L})$ is 
the automorphism group  of the finite quadratic form $\discf{L}$.
An \emph{even overlattice} of $L$ is a submodule $M$ of $L\dual$ containing $L$ such that 
the restriction of the natural $\Q$-valued symmetric bilinear form on $L\dual$ makes $M$ an even lattice.
By definition, we have the following:
\begin{proposition}\label{prop:overlattices}
The map $M\mapsto M/L$ gives a bijection from the set of even overlattices $M$ of $L$
to the set of  totally isotropic subgroups of $\discf{L}$.
\qed
\end{proposition}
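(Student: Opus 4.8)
The plan is to construct the inverse map explicitly and verify that both directions land in the correct sets. Throughout I identify $\discg{L}$ with $L\dual/L$ and write $\bar v\in \discg{L}$ for the class of $v\in L\dual$, so that $\discf{L}(\bar v)=\intf{v, v}\bmod 2\Z$ while the associated $\Q/\Z$-valued bilinear form is $b(\bar v,\bar w)=\intf{v, w}\bmod\Z$.

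First I would check that $M\mapsto M/L$ is well defined and lands in the set of totally isotropic subgroups. If $M$ is an even overlattice, then $L\subseteq M\subseteq L\dual$, so $M/L$ is a subgroup of $L\dual/L=\discg{L}$. For any $v\in M$, evenness of $M$ gives $\intf{v, v}\in 2\Z$, hence $\discf{L}(\bar v)=0$; thus $\discf{L}$ vanishes on $M/L$, which is therefore totally isotropic.

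Next I would build the candidate inverse. Given a totally isotropic subgroup $H\subseteq \discg{L}$, let $M_H$ be the preimage of $H$ under the projection $L\dual\to L\dual/L$. Then $L\subseteq M_H\subseteq L\dual$ automatically, so it remains to see that the restriction of the $\Q$-valued form to $M_H$ is integral and even. This is the only point requiring a short argument. Since $\discf{L}$ vanishes on $H$, the polarization identity $\discf{L}(x+y)=\discf{L}(x)+\discf{L}(y)+2\,b(x,y)$ forces $2\,b(x,y)=0$ in $\Q/2\Z$; as the doubling map $\Q/\Z\to\Q/2\Z$ is injective, this gives $b(x,y)=0$ in $\Q/\Z$ for all $x,y\in H$. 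Hence for $v,w\in M_H$ we obtain $\intf{v, w}\in\Z$, so the form is integral on $M_H$, and $\intf{v, v}\in 2\Z$ because $\discf{L}(\bar v)=0$; thus $M_H$ is an even overlattice.

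Finally I would verify that the two constructions are mutually inverse, which is immediate from the elementary correspondence between subgroups of the quotient $\discg{L}$ and submodules of $L\dual$ containing $L$: the preimage of $M/L$ equals $M$ since $M\supseteq L$, and $M_H/L=H$ since $L\dual\to \discg{L}$ is surjective with kernel $L$. The only genuinely substantive step is the passage from isotropy of the quadratic form $\discf{L}$ to isotropy of its associated bilinear form, which is what secures integrality of $M_H$; everything else is bookkeeping.
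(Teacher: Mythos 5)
Your proof is correct. The paper offers no proof at all --- it introduces even overlattices and then states the proposition with the words ``By definition, we have the following'' --- so your argument simply supplies, correctly, the standard verification that the paper leaves implicit; in particular you rightly isolate the only substantive step, namely that total isotropy of the quadratic form $\discf{L}$ on a subgroup forces vanishing of the associated bilinear form (via polarization and the injectivity of the doubling map $\Q/\Z\to\Q/2\Z$), which is exactly what secures integrality of the pairing on the preimage $M_H$, with everything else being the elementary correspondence between subgroups of $\discg{L}$ and submodules of $L\dual$ containing $L$.
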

%
%The following proposition plays an important role in Section~\ref{sec:geom}.
Note that $\OG(L)$ acts on the set of even overlattices of $L$ from the right.
\begin{proposition}\label{prop:genus}
Suppose that the signature of $L$ is $(s_+, s_-)$.
Let $H$ be an even unimodular lattice of signature $(h_+, h_-)$.
Then $L$ can be embedded into $H$ primitively if and only if
there exists an even lattice of signature
$(h_+-s_+, h_- - s_-)$ whose discriminant form is isomorphic to
$-\discf{L}$.
\qed
\end{proposition}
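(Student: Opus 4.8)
The plan is to prove this by Nikulin's discriminant-form gluing technique, with Proposition~\ref{prop:overlattices} serving as the main bookkeeping device and signature additivity under orthogonal direct sums as the other basic ingredient. In both directions the even lattice $M$ of signature $(h_+-s_+, h_--s_-)$ will play the role of an orthogonal complement of $L$, and the isomorphism $\discf{M}\cong -\discf{L}$ will encode how $L$ and $M$ are glued together inside the unimodular lattice.

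For the \emph{only if} direction, I would start from a primitive embedding $L\inj H$ and set $M:=L^\perp$ in $H$. Since $H$ is even, so is $M$, and since $L\tensor\R$ and $M\tensor\R$ are orthogonal complements inside $H\tensor\R$, signature additivity gives $M$ the signature $(h_+-s_+, h_--s_-)$. The sublattice $L\oplus M\subseteq H$ has finite index, so $H$ is an even overlattice of $L\oplus M$; by Proposition~\ref{prop:overlattices} it corresponds to a totally isotropic subgroup $\Gamma\subseteq \discg{L\oplus M}=\discg{L}\oplus\discg{M}$ with respect to $\discf{L}\oplus\discf{M}$. Primitivity of $L$, together with the automatic primitivity of the orthogonal complement $M$, forces both projections $\Gamma\to\discg{L}$ and $\Gamma\to\discg{M}$ to be injective; comparing orders via $|\discg{L\oplus M}|=|\Gamma|^2\cdot|\discg{H}|=|\Gamma|^2$ and using $|\Gamma|\le|\discg{L}|$, $|\Gamma|\le|\discg{M}|$ then forces $|\Gamma|=|\discg{L}|=|\discg{M}|$, so both projections are in fact isomorphisms. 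Thus $\Gamma$ is the graph of an isomorphism $\gamma\colon\discg{L}\isom\discg{M}$, and the isotropy of $\Gamma$ translates into $\discf{M}(\gamma(x))=-\discf{L}(x)$, i.e.\ $\gamma$ is an isometry $\discf{L}\isom-\discf{M}$. This produces the desired even lattice $M$.

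For the \emph{if} direction I would simply reverse the construction. Given an even $M$ of the prescribed signature and an isometry $\gamma\colon\discf{L}\isom-\discf{M}$, put $N:=L\oplus M$ and consider the graph $\Gamma_\gamma:=\shortset{(x,\gamma(x))}{x\in\discg{L}}\subseteq\discg{N}$. Since $\discf{N}(x,\gamma(x))=\discf{L}(x)+\discf{M}(\gamma(x))=0$, the subgroup $\Gamma_\gamma$ is totally isotropic, so by Proposition~\ref{prop:overlattices} it determines an even overlattice $H\sprime$ of $N$. An order count gives $|\discg{H\sprime}|=|\discg{N}|/|\Gamma_\gamma|^2=(|\discg{L}|\,|\discg{M}|)/|\discg{L}|^2=1$, so $H\sprime$ is even unimodular of signature $(h_+,h_-)$, and $L$ lies in $H\sprime$ primitively because $\Gamma_\gamma\cap\discg{L}=0$. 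To conclude with the \emph{given} $H$ rather than merely some even unimodular lattice of the same signature, I would invoke the uniqueness of the indefinite even unimodular lattice of a prescribed signature, obtaining an isometry $H\sprime\isom H$ that carries this primitive copy of $L$ into $H$.

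The hard part, and the only genuinely delicate point, is the order count in the \emph{only if} direction that pins down $\Gamma$ as the graph of an isometry onto $-\discf{M}$: one must verify simultaneously that the two projections of $\Gamma$ are injective (from primitivity of $L$ and of $M$) and surjective (from unimodularity of $H$), so that the gluing group is exactly ``half'' the total discriminant group. The \emph{if} direction is then essentially formal once Proposition~\ref{prop:overlattices} is available, its one external input being the uniqueness theorem for indefinite even unimodular lattices; this is applicable in every situation arising here, since the relevant $H$ (namely $\Lten$ and the N\'eron--Severi lattices of the covering $K3$ surfaces) are hyperbolic, hence indefinite.
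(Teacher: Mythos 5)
Your proof is correct. The paper gives no proof of this proposition at all --- it is quoted as a known result of Nikulin~\cite{Nikulin1979} --- and your argument is precisely the standard gluing argument behind that result: realizing the unimodular overlattice of $L\oplus L^{\perp}$ as the graph of an anti-isometry of discriminant forms via Proposition~\ref{prop:overlattices} and an index count, and reversing the construction for the converse. You were also right to isolate the one genuine subtlety: the ``if'' direction lands in \emph{some} even unimodular lattice of signature $(h_+,h_-)$, so embedding into the \emph{given} $H$ requires uniqueness in its genus; your appeal to the uniqueness of indefinite even unimodular lattices covers every case the paper needs ($\Lten$ and the $K3$ lattice $\LK$ are indefinite), whereas for definite $H$ the statement as written can actually fail (a vector of square $-2$ generates a lattice admitting no primitive embedding into the Leech lattice, even though the complementary genus is nonempty).
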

\begin{remark}\label{rem:genus}
The signature $(h_+-s_+, h_- - s_-)$
and the isomorphism class of the discriminant form $-\discf{L}$ 
determine a genus of even lattices.
There exist various versions of  the criterion to determine
whether a genus given by  signature and a finite quadratic form is empty or not.
See, for example, Nikulin~\cite{Nikulin1979}, Conway-Sloane~\cite[Chapter~15]{ConwaySloaneBook}, Miranda-Morrison~\cite{MirandaMorrisonBook}.
This criterion has been applied to many problems of $K3$ surfaces.
See, for example,~\cite{AkyolDegtyarev2015,  %Shimada2000, 
Shimada2007, Shimada2010, ShimadaConnected,
Yang1996}.
\end{remark}
%
%%%%%%%%%%%%%%%%%%
%
%
%%%%%%%%%%%%%%%%%%
%
%%%%%%%%%%%%%%%%%%%
%
\section{The lattice $\Lten$}\label{sec:Lten}  %
%
%We rephrase Definition~\ref{def:L10equiv1}  
%of the equivalence relation of $\ADE$-configurations of roots in $\Lten$
%as follows.
For an embedding $f\colon \Phi\inj\Lten$ of an $\ADE$-configuration $\Phi$ into $\Lten$,
we denote by $\Phi_f$ the image of $\Phi$ by $f$.
We extend  Definition~\ref{def:L10equiv1} to the equivalence relation of
embeddings of $\ADE$-configurations into $\Lten$.
%Then Definition~\ref{def:equivL10}  below coincides with Definition~\ref{def:L10equiv1}.
%to the equivalence relation of
%embeddings of $\ADE$-configurations into $\Lten$ as follows.
%
\begin{definition}\label{def:equivL10}
Let $f\colon \Phi\inj\Lten$  and  $f\sprime\colon \Phi\sprime\inj \Lten$ be
embeddings 
of  $\ADE$-configurations $\Phi$ and $\Phi\sprime$.
We say that $f$ and $f\sprime$ 
are \emph{equivalent} 
if there exist an isomorphism  $\gamma\colon \Phi\isom \Phi\sprime$
and an isometry $g\in \OG^+(\Lten)$ that make the following diagram commutative:
$$
\begin{array}{ccc}
\Phi &\maprightinjsp{f} & \Lten\mystruth{12pt} \\
\llap{\scriptsize $\gamma$\;}\mapdown && \mapdown{\rlap{\scriptsize $g$}}\\
\Phi\sprime  &\maprightinjsp{f\sprime} & \Lten\rlap{.}
\end{array}
$$
\end{definition}
The purpose of this section is to prove Theorem~\ref{thm:L10main}.
\subsection{Negative-definite primitive root sublattices of $\Lten$}\label{subsec:primrootsublattices}
\begin{definition}
Let $\NNN$ denote the set of all negative-definite primitive root sublattices of $\Lten$,
on which $\OG^+(\Lten)$ acts from the right.
\end{definition}
We calculate the set $\NNN/\OG^+(\Lten)$  of orbits of this action. %, and 
%for each orbit $o\in \NNN/\OG^+(\Lten)$,
%we choose a representative $\primR$ of $o$, and calculate 
%the stabilizer subgroup $\Stab(\primR, \Lten)$ of $\primR$ in $\OG^+(\Lten)$.
%\par
Recall that the lattice $\Lten$ has a basis 
$E_{10}=\{e_1, \dots, e_{10}\}$
consisting of roots that form the Dynkin diagram in Figure~\ref{fig:E10}.
We fix, once and for all,
 the positive cone $\Pten$ that contains the vector 
\begin{equation*}\label{eq:w0}
c_0:=e_1\dual+ \dots + e_{10}\dual
\end{equation*}
of square-norm $1240$,
where $e_1\dual, \dots, e_{10}\dual$ are the basis of $\Lten\dual=\Lten$
dual to the basis $e_1, \dots, e_{10}$.
%In the following, 
For simplicity, 
we put,  for a subset $S$ of $\Lten\tensor\R$,
\begin{eqnarray}
[S]\sperp &:=& \set{v\in \Lten}{\intf{v, x}=0\;\;\textrm{for all}\;\; x\in S}, \label{eq:Sperp1}\\
(S)\sperp &:=& \set{v\in \Pten}{\intf{v, x}=0\;\;\textrm{for all}\;\; x\in S}. \label{eq:Sperp2}
\end{eqnarray}
We consider the chamber
$$
\Delta_0:=\set{x\in \Pten}{\intf{x, e_i}\ge 0\;\;\textrm{for all}\;\; i=1. \dots, 10},
$$
which contains $c_0$ in its interior.
Vinberg~\cite{Vinberg1975} proved the following:
\begin{theorem}[Vinberg]
Each $e_i$ defines a wall of $\Delta_0$.
The chamber $\Delta_0$ is a standard fundamental domain of the action of $W(\Lten)$ on $\Pten$.
\qed
\end{theorem}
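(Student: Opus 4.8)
The plan is to exploit the fact that the $e_i$ form not merely a set of roots but a \emph{basis} of $\Lten$, and then to invoke Vinberg's algorithm to control all remaining roots. Since $c_0=e_1\dual+\cdots+e_{10}\dual$ satisfies $\intf{c_0, e_i}=1>0$ for every $i$, the vector $c_0$ lies in the interior of $\Delta_0$, so $\Delta_0$ is a genuine chamber. Because $\{e_1,\dots,e_{10}\}$ is a basis of $\Lten$, the linear functionals $x\mapsto \intf{x, e_i}$ are linearly independent; hence $\Delta_0$ is a simplicial cone in $\Pten$ and each inequality $\intf{x, e_i}\ge 0$ is irredundant. As $\intf{e_i, e_i}=-2<0$, the set $(e_i)\sperp$ is non-empty, and $F_i:=\Delta_0\cap (e_i)\sperp$ is a codimension-one face disjoint from $\Delta_0\spcirc$. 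This already proves the first assertion: each $e_i$ defines a wall of $\Delta_0$.

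First I would show that $\Delta_0$ is a fundamental domain for the subgroup $W_0:=\langle \refl{e_1},\dots,\refl{e_{10}}\rangle\subseteq W(\Lten)$. The off-diagonal Gram entries satisfy $\intf{e_i, e_j}\in\{0,1\}$, so the dihedral angle between the walls $(e_i)\sperp$ and $(e_j)\sperp$ is $\pi/2$ when $\intf{e_i, e_j}=0$ and $\pi/3$ when $\intf{e_i, e_j}=1$. Thus $\Delta_0$ is a Coxeter polyhedron, and the standard theory of linear reflection groups (Vinberg, Tits) gives that $W_0$ is the Coxeter group generated by the reflections in these walls and that $\Delta_0$ is a fundamental domain for the $W_0$-action on $\Pten$, tessellating $\Pten$ by its $W_0$-translates as in~\eqref{eq:tess}.

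The crux is the \emph{completeness step}: $W_0=W(\Lten)$, equivalently that no root hyperplane $(r)\sperp$ meets the interior $\Delta_0\spcirc$. I would prove the sharper statement that every $r\in\Roots(\Lten)$ is, up to sign, a non-negative integer combination of $e_1,\dots,e_{10}$. For a root $r$ with $\intf{c_0, r}>0$ one computes $\intf{c_0, \refl{e_i}(r)}=\intf{c_0, r}+\intf{r, e_i}$, so a reflection $\refl{e_i}$ with $\intf{r, e_i}<0$ strictly decreases the positive integer $\intf{c_0, r}$; iterating yields a descent that terminates at one of the $e_j$, \emph{provided} that at every stage with $r\notin\{e_1,\dots,e_{10}\}$ some index $i$ with $\intf{r, e_i}<0$ exists. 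Ruling out an extra dominant root $r$ (one with $\intf{r, e_i}\ge 0$ for all $i$) that is not a wall vector is precisely Vinberg's completeness criterion, and it is the main obstacle: it is the genuine content of the theorem. For $\Lten$ it reduces to the finite verification, carried out by Vinberg's algorithm from the controlling vector $c_0$, that the simplex cut out by $e_1,\dots,e_{10}$ already has finite volume in $\Pten$, so that the algorithm halts after the first batch of roots and no further walls appear; the $E_{10}$ Coxeter diagram of Figure~\ref{fig:E10} is exactly the classical finite-covolume simplex in hyperbolic $9$-space that makes this succeed.

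Granting completeness, the conclusion is immediate. For any root $r$, write $\pm r=\sum a_i e_i$ with all $a_i\ge 0$; then $\intf{x, \pm r}=\sum a_i\intf{x, e_i}\ge 0$ for every $x\in\Delta_0$, so $\intf{\,\cdot\,, r}$ has constant sign on $\Delta_0$ and $(r)\sperp$ does not cross $\Delta_0\spcirc$. Hence $\Delta_0\spcirc$ lies in a single connected component of $\Pten\setminus\bigcup_{r}(r)\sperp$, and since $\Delta_0$ is bounded exactly by the root hyperplanes $(e_i)\sperp$, it is the closure of that component. Therefore $\Delta_0$ is a standard fundamental domain of the action of $W(\Lten)$ on $\Pten$, and consequently $W_0=W(\Lten)$.
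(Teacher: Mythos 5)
The paper does not prove this statement at all --- it quotes it with a citation to Vinberg's 1975 paper --- so your attempt has to stand on its own, and its architecture is indeed the standard (Vinberg) route: Coxeter--Tits theory makes $\Delta_0$ a fundamental domain for the subgroup $W_0$ generated by $\refl{e_1},\dots,\refl{e_{10}}$, and everything then hinges on the completeness statement $W_0=W(\Lten)$, i.e.\ that no root hyperplane crosses $\Delta_0\spcirc$. You isolate this correctly, but you do not prove it: you replace it by the inference ``the simplex cut out by $e_1,\dots,e_{10}$ has finite volume, hence Vinberg's algorithm halts after the first batch of roots and no further walls appear.'' That inference is invalid as stated. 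A finite-volume Coxeter polyhedron all of whose walls are root hyperplanes need \emph{not} be a standard fundamental domain of the full Weyl group. Concretely, in $U\oplus A_1$ (with $f,g$ the standard isotropic basis of $U$ and $h$ the generator of $A_1$), the roots $f+h$, $f-h$, $g-f$ have pairwise products $2,1,1$, so their hyperplanes bound a Coxeter triangle with angles $0,\pi/3,\pi/3$ of finite area; yet the vector $2f+g$ lies in its interior and is orthogonal to the root $h$, so a root hyperplane crosses this triangle, and the reflection group generated by these three roots is a proper (index-two) subgroup of $W(U\oplus A_1)$. What makes Vinberg's termination criterion sound is the invariant maintained by his algorithm: roots are processed in increasing order of distance from the controlling vector $c_0$, and each accepted root is thereby \emph{certified to define a wall of the true chamber containing $c_0$}; only for such a certified collection does ``finite volume'' imply ``equal to the chamber.'' Nothing in your write-up certifies this for $e_1,\dots,e_{10}$ --- equivalently, nothing rules out the ``extra dominant root'' you yourself flag, or a root hyperplane through $\Delta_0\spcirc$ closer to $c_0$ than the $(e_i)\sperp$. (The one extra structure you have and the counterexample lacks, namely that the $e_i$ form a $\Z$-basis of $\Lten$, is never used in your completeness step, so it cannot rescue the inference as written.) This is the genuine content of the theorem, and it remains unproved: to close it you must either actually run Vinberg's distance-ordered enumeration from $c_0$ and invoke his termination theorem, or give an $E_{10}$-specific argument that every vector of square-norm $-2$ is $W_0$-conjugate to some $e_i$.

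Two further, smaller gaps. First, even the finite-volume property is asserted (``the classical finite-covolume simplex'') rather than checked; the check is the finite computation that every $e_j\dual$ has square-norm $\ge 0$ (equivalently, that deleting any one vertex from the diagram of Figure~\ref{fig:E10} leaves an elliptic or parabolic diagram --- deleting $e_{10}$ leaves affine $E_8$, which accounts for the single ideal vertex $\R_{\ge 0}e_{10}\dual$ recorded in the paper). Second, your proof of the first assertion is also incomplete: linear independence of the $e_i$ makes each inequality irredundant for the simplicial cone in $\Lten\tensor\R$, but a wall of $\Delta_0$ must meet the positive cone $\Pten$, so you still need the facet on $(e_i)\sperp$ to contain vectors of positive square-norm. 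This again comes down to the signs of the norms of the dual basis vectors $e_j\dual$ --- the same data as the finite-volume verification --- so both halves of the statement ultimately rest on a computation that your proposal never performs.
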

\begin{definition}
We call a standard fundamental domain of the action of $W(\Lten)$ on $\Pten$
a \emph{Vinberg chamber}.
We denote by $\VVV$ the set of all Vinberg chambers.
\end{definition}
The following easy fact is used frequently in this section:
%
%\begin{lemma}\label{lem:faceVinberg}
Let $r_1, \dots, r_n$ be roots of $\Lten$
such that the linear subspace $\PPP\sprime:=(\{r_1, \dots, r_n\})\sperp$ of $\PPP_{10}$ 
is non-empty.
Let $\Delta$ be a Vinberg chamber.
If $\PPP\sprime\cap \Delta$ contains a non-empty open subset of $\PPP\sprime$,
then $\PPP\sprime\cap \Delta$ is a face of $\Delta$,
and  its supporting linear subspace  is $\PPP\sprime$.
%\end{lemma}
\par
\medskip
Since the Dynkin diagram in Figure~\ref{fig:E10} has no symmetries,
we see from~\eqref{eq:splithyp} that $\OG^+(\Lten)$ is equal to $W(\Lten)$. %,
%which  is generated by the ten reflections $\refl{e_1}, \dots, \refl{e_{10}}$.
In particular, we have the following:
\begin{proposition}
The map $g\mapsto \Delta_0^{g}$ is a bijection 
from $\OG^+(\Lten)$ to $\VVV$.
\qed
\end{proposition}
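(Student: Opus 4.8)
The plan is to deduce the statement directly from two ingredients that are already in place: the simple-transitivity of the Weyl-group action on standard fundamental domains, and the identification $\OG^+(\Lten)=W(\Lten)$ recorded immediately above the statement.

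First I would recall from Subsection~\ref{subsec:hyperbolic} that, for any even hyperbolic lattice $L$, the Weyl group $W(L)$ acts simple-transitively on the set of standard fundamental domains of its action on a positive cone. Since a Vinberg chamber is by definition exactly a standard fundamental domain of the action of $W(\Lten)$ on $\Pten$, the set $\VVV$ is precisely the set to which this simple-transitivity statement applies. Hence the orbit map $w\mapsto \Delta_0^{w}$ from $W(\Lten)$ to $\VVV$ is already a bijection: transitivity gives surjectivity, while freeness gives injectivity, because $\Delta_0^{w_1}=\Delta_0^{w_2}$ forces $\Delta_0^{w_1 w_2^{-1}}=\Delta_0$ and therefore $w_1=w_2$.

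Next I would invoke the equality $\OG^+(\Lten)=W(\Lten)$ established just before the statement. This rests on the splitting exact sequence~\eqref{eq:splithyp} together with the observation, read off from Figure~\ref{fig:E10}, that the Dynkin diagram of $E_{10}$ admits no nontrivial symmetry; consequently $\Aut(\Delta_0)$ is trivial and the sequence degenerates to $\OG^+(\Lten)=W(\Lten)$. The orbit map $g\mapsto \Delta_0^{g}$ on $\OG^+(\Lten)$ thus coincides with the orbit map on $W(\Lten)$, and the desired bijection follows at once.

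I do not expect any genuine obstacle for this particular proposition, since it merely packages results already proved: Vinberg's identification of $\Delta_0$ as a standard fundamental domain and the triviality of $\Aut(\Delta_0)$ constitute all the substantive work, and both precede the statement. The single point deserving explicit mention is the terminological one, namely that $\VVV$ is literally the set of standard fundamental domains of $W(\Lten)$ on $\Pten$, so that the simple-transitivity of Subsection~\ref{subsec:hyperbolic} transfers verbatim and no separate verification of transitivity or freeness is required.
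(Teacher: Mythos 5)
Your proof is correct and follows exactly the paper's route: the paper derives this proposition as an immediate consequence ("In particular\dots") of the equality $\OG^+(\Lten)=W(\Lten)$, obtained from the splitting sequence~\eqref{eq:splithyp} and the absence of symmetries of the $E_{10}$ diagram, combined with the simple-transitivity of $W(\Lten)$ on the set of standard fundamental domains, i.e.\ on $\VVV$. Your explicit spelling-out of surjectivity from transitivity and injectivity from freeness is just a slightly more detailed rendering of the same argument.
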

We denote by $\gamma\colon \VVV\to \OG^+(\Lten)$ the inverse  map of $g\mapsto \Delta_0^{g}$, that is, $\gamma(\Delta)$
is the unique element of $\OG^+(\Lten)$ such that 
$$
\Delta_0^{\gamma(\Delta)}=\Delta.
$$
The following lemma is easy to prove:
\begin{lemma}
{\rm (1)} 
A subset $\Sigma$ of $E_{10}$ is an $\ADE$-configuration of roots if and only if
$\Sigma\ne\emptyset$, $\Sigma\ne E_{10}$, and $\Sigma\ne \{e_1, \dots, e_{9}\}$.
{\rm (2)}
Let $\closure{\PPP}_{10}$ and $\closure{\Delta}_{0}$ denote  the closure of $\Pten$ and $\Delta_0$ in $\Lten\tensor\R$,
respectively.
Then $\closure{\Delta}_{0}\cap (\closure{\PPP}_{10}\setminus \Pten)$ is equal to the half-line
$\R_{\ge 0}e_{10}\dual=( [\{e_1, \dots, e_{9}\}]\sperp \tensor\R)\cap \closure{\PPP}_{10}$.
\qed
\end{lemma}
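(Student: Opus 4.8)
The plan is to prove the two assertions in order, using (1) as the combinatorial input that drives (2).

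For (1), the point is that the Dynkin diagram of $E_{10}$ in Figure~\ref{fig:E10} is a \emph{tree} whose only vertex of degree $3$ is $e_4$, from which three legs issue: $\{e_1\}$, $\{e_3,e_2\}$, $\{e_5,\dots,e_{10}\}$, of lengths $1,2,6$. I would first note that the induced subdiagram on any $\Sigma\subseteq E_{10}$ is a forest, so it contains no cycle (hence no $\widetilde A_n$); it inherits maximal degree $\le 3$ with at most one branch vertex, so it contains neither a vertex of degree $\ge 4$ nor two branch vertices (hence no $\widetilde D_n$). Therefore a connected component of $\Sigma$ can fail to be of type $A$, $D$, or $E$ only when it is a tree with the single branch vertex $e_4$ and all three legs present; since the legs from $e_4$ are bounded by lengths $(1,2,6)$, the only non-$\ADE$ leg-types available are $(1,2,5)$, which is the affine diagram $\widetilde E_8$, and $(1,2,6)$, which is $E_{10}$ itself. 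A direct check then shows these arise exactly for $\Sigma=\{e_1,\dots,e_9\}$ and $\Sigma=E_{10}$, while every other nonempty $\Sigma$ is an $\ADE$-configuration. This gives (1).

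For (2) I would first pin down the distinguished ray. By (1) the subdiagram $\{e_1,\dots,e_9\}$ is of affine type $\widetilde E_8$, so it spans a negative semi-definite sublattice of rank $9$ with a one-dimensional radical, generated by the null root $\delta=\sum_{k=1}^{9}a_k e_k$ with all marks $a_k>0$. Hence $[\{e_1,\dots,e_9\}]\sperp\tensor\R$ is exactly the line $\R\delta$. Since $e_{10}\dual$ is orthogonal to $e_1,\dots,e_9$ it lies on this line, and from $\intf{e_{10}\dual, e_{10}}=1>0$ together with $\intf{\delta, e_{10}}=a_9>0$ I conclude that $e_{10}\dual$ is a \emph{positive} multiple of $\delta$; in particular $\intf{e_{10}\dual, e_{10}\dual}=0$. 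Because $\intf{\delta, c_0}=\sum_k a_k>0$ and $c_0\in\Pten$, the vector $\delta$, and hence $e_{10}\dual$, lies in $\closure{\PPP}_{10}$. As $\intf{e_{10}\dual, e_i}$ equals $0$ for $i\le 9$ and $1$ for $i=10$, the ray $\R_{\ge 0}e_{10}\dual$ lies in $\closure{\Delta}_0$ and, being isotropic, in $\closure{\PPP}_{10}\setminus\Pten$. This yields the inclusion $\supseteq$ and the second equality $\R_{\ge 0}e_{10}\dual=([\{e_1,\dots,e_9\}]\sperp\tensor\R)\cap\closure{\PPP}_{10}$.

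The heart of the proof is the reverse inclusion. I would use that $\closure{\Delta}_0$, being cut out by the ten inequalities $\intf{x,e_i}\ge 0$ with linearly independent normals $e_1,\dots,e_{10}$, is the simplicial cone $\set{\sum_i t_i e_i\dual}{t_i\ge 0}$ on the dual basis. By (1), deleting any $e_i$ with $i\le 9$ leaves an $\ADE$-configuration, so the rank-$9$ lattice it generates is negative definite and its orthogonal line is positive definite; thus each $e_i\dual$ with $i\le 9$ is timelike, and from $\intf{e_i\dual,\delta}=a_i>0$ (and $\delta$ forward isotropic) it is in fact a forward vector, so $e_i\dual\in\Pten$, while $e_{10}\dual$ is the forward isotropic vector found above. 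Now take $x\in\closure{\Delta}_0$ with $\intf{x,x}=0$ and write $x=q+t_{10}e_{10}\dual$ with $q=\sum_{i=1}^{9}t_i e_i\dual$ and all $t_i\ge 0$. If $q\ne 0$, then $q$ is a nonzero nonnegative combination of vectors of $\Pten$, hence $q\in\Pten$ and $\intf{q,q}>0$; since $q$ and $e_{10}\dual$ both lie in $\closure{\PPP}_{10}$ we have $\intf{q, e_{10}\dual}\ge 0$, whence $\intf{x,x}=\intf{q,q}+2t_{10}\intf{q, e_{10}\dual}\ge\intf{q,q}>0$, contradicting $\intf{x,x}=0$. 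Therefore $q=0$ and $x\in\R_{\ge 0}e_{10}\dual$, proving $\subseteq$. The main obstacle is precisely this last step: ruling out that an isotropic point of $\closure{\Delta}_0$ pairs positively with any $e_i$ ($i\le 9$). Everything turns on feeding (1) into the signature dichotomy for the extreme rays $e_i\dual$ (timelike for $i\le 9$, isotropic for $i=10$) and then invoking convexity of $\Pten$; the positivity of the marks $a_k$ and the elementary Lorentzian inequality $\intf{q,e_{10}\dual}\ge 0$ for two vectors of $\closure{\PPP}_{10}$ are the facts I would state carefully.
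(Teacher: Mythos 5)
Your proof is correct. Note that the paper itself offers no argument for this lemma---it is introduced with ``The following lemma is easy to prove''---so there is no authorial proof to compare yours with; what you supply is a genuine verification of both assertions. Part (1) is the right combinatorics: every connected subdiagram of the $E_{10}$ tree is either a path or a star centered at $e_4$ with legs bounded by lengths $(1,2,6)$, and the only non-$\ADE$ stars available are $(1,2,5)=\widetilde{E}_8$, realized exactly by $\{e_1,\dots,e_9\}$, and $(1,2,6)$, realized by $E_{10}$ itself. Part (2) rests on the correct Lorentzian facts, each of which you establish: $e_i\dual\in\Pten$ for $i\le 9$ (timelike by (1), forward by pairing with the null root $\delta$), $e_{10}\dual$ a forward isotropic positive multiple of $\delta$, and the convexity inequality that kills any isotropic point with some $t_i>0$, $i\le 9$. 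One step is asserted faster than it is proved: the identification of $\closure{\Delta}_{0}$ with the simplicial cone $\{\sum_i t_i e_i\dual : t_i\ge 0\}$ is not a formal consequence of the normals $e_1,\dots,e_{10}$ being a basis, because $\Delta_0$ is by definition the intersection of those half-spaces with the \emph{open} cone $\Pten$, so a priori its closure is only \emph{contained} in the simplicial cone. That trivial inclusion is all your hard direction $\subseteq$ actually uses, so no harm is done there; but for the inclusion $\supseteq$ you should add one line: since $e_{10}\dual\in\closure{\PPP}_{10}$ (which you prove) and $c_0\in\Pten$, the points $t e_{10}\dual+\varepsilon c_0$ with $\varepsilon>0$ lie in $\Delta_0$ and converge to $t e_{10}\dual$ as $\varepsilon\to 0$, so the ray does lie in $\closure{\Delta}_{0}$. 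With that limiting argument made explicit, the proof is complete.
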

Let $2^{E_{10}}$ be the power set of $E_{10}=\{e_1, \dots, e_{10}\}$.
We consider the set 
$$
\Sigmas:=2^{E_{10}}\setminus \{\emptyset, E_{10},  \{e_1, \dots, e_{9}\}\}.
$$
Since $\closure{\Delta}_{0}$  is the cone over a $9$-dimensional simplex, we see that
$$
\Sigma\;\;\mapsto\;\; F_{\Sigma}:=(\Sigma)\sperp\, \cap \, \Delta_0
$$
is a bijection from $\Sigmas$ to the set of faces of $\Delta_0$.
For an element $\Sigma$ of $\Sigmas$,
we denote by $\gen{\Sigma}$ the sublattice of $\Lten$ generated by $\Sigma$.
Since $\Sigma$ is a subset of the basis $E_{10}$ of $\Lten$,
the sublattice $\gen{\Sigma}$ is primitive in $\Lten$.
Therefore we obtain the following:
\begin{lemma}\label{lem:genSigma}
Let $\Sigma$ be an element of $\Sigmas$.
%and $\gen{\Sigma}$  the sublattice of $\Lten$ generated by $\Sigma$.
Then  $\gen{\Sigma} \in \NNN$ and $\gen{\Sigma}=[F_{\Sigma}]\sperp$.
\qed
\end{lemma}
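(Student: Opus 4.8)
The plan is to establish the two assertions in turn: first that $\gen{\Sigma}$ lies in $\NNN$, and then the identity $\gen{\Sigma}=[F_{\Sigma}]\sperp$. For the membership in $\NNN$ I would argue as follows. By part (1) of the lemma stated just above, every $\Sigma\in\Sigmas$ is an $\ADE$-configuration of roots; in particular $\gen{\Sigma}$ is generated by the roots in $\Sigma$, so it is a root lattice. Because $\Sigma$ is a subset of the basis $E_{10}$ of $\Lten$, its elements are linearly independent, so the Gram matrix of $\Sigma$ inside $\Lten$ coincides with that of the $\ADE$-configuration $\Sigma$; hence, by the construction recalled in Section~\ref{subsec:negdefroot}, the form on $\gen{\Sigma}$ is negative-definite. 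Finally, being spanned by part of a basis of $\Lten$, the sublattice $\gen{\Sigma}$ is primitive, as already observed just before the statement. Thus $\gen{\Sigma}\in\NNN$.

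For the identity I would pass through the supporting linear subspace of the face $F_{\Sigma}$. Write $V:=\set{x\in\Lten\tensor\R}{\intf{x,e_i}=0\ \text{for all}\ e_i\in\Sigma}$ for the orthogonal complement of $\mathrm{span}_{\R}\Sigma$ in $\Lten\tensor\R$. As $\gen{\Sigma}$ is negative-definite, $(\Sigma)\sperp$ is non-empty; and since $\closure{\Delta}_0$ is the cone over a $9$-simplex with walls defined by $e_1,\dots,e_{10}$, the face $F_{\Sigma}=(\Sigma)\sperp\cap\Delta_0$ cut out by the walls indexed by $\Sigma$ contains a non-empty open subset of $(\Sigma)\sperp$, so by the easy fact recalled above its supporting linear subspace is $V$. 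In particular $F_{\Sigma}$ spans $V$, so a vector of $\Lten$ is orthogonal to every element of $F_{\Sigma}$ if and only if it is orthogonal to every element of $V$; thus $[F_{\Sigma}]\sperp=[V]\sperp$. By non-degeneracy of the form, the orthogonal complement of $V$ in $\Lten\tensor\R$ is $\mathrm{span}_{\R}\Sigma$, so $[V]\sperp=(\mathrm{span}_{\R}\Sigma)\cap\Lten=(\gen{\Sigma}\tensor\Q)\cap\Lten$, which is precisely the primitive closure of $\gen{\Sigma}$. Since $\gen{\Sigma}$ is primitive by the first part, this closure equals $\gen{\Sigma}$, giving $\gen{\Sigma}=[F_{\Sigma}]\sperp$.

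The one step that requires genuine care is the determination of the supporting linear subspace of $F_{\Sigma}$, i.e. checking that $F_{\Sigma}$ spans all of $V$ rather than a proper subspace. This rests on the simplicial structure of $\Delta_0$ furnished by Vinberg's theorem, together with the bijection $\Sigma\mapsto F_{\Sigma}$ onto the set of faces of $\Delta_0$, and it is most cleanly handled by invoking the easy fact recorded above, which was tailored precisely for this situation. Once the supporting subspace is identified, the remaining manipulations with orthogonal complements in the non-degenerate lattice $\Lten$ are purely formal.
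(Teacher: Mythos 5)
Your proof is correct and follows essentially the same route as the paper: the paper states this lemma with no separate proof, treating it as an immediate consequence of exactly the facts you invoke --- that $\Sigma$ is an $\ADE$-configuration of roots (the easy lemma), that $\gen{\Sigma}$ is primitive because $\Sigma$ is part of the basis $E_{10}$, and that the simplicial structure of $\closure{\Delta}_0$ makes $F_{\Sigma}$ a face whose supporting linear subspace is the full orthogonal complement of $\mathrm{span}_{\R}\Sigma$. Your write-up simply fills in the routine details (negative-definiteness, the double orthogonal complement, and the identification of $[F_{\Sigma}]\sperp$ with the primitive closure) that the paper leaves implicit.
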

%
%\begin{proof}
%Since $\Sigma$ is a subset of the basis $E_{10}$ of $\Lten$,
%the sublattice $\gen{\Sigma}$ is primitive in $\Lten$.
%Since the face $F_{\Sigma}$ of $\Delta_0$ contains a nonempty open subset of $(\Sigma)\sperp$,
%we have  $\gen{\Sigma}=[F_{\Sigma}]\sperp$.
%\end{proof}
%
\begin{lemma}\label{lem:primR}
Let $R$ be a negative-definite root sublattice of $\Lten$,
and $\primR$ the primitive closure of $R$ in $\Lten$.
Then there exists an isometry $g\in \OG^+(\Lten)$ 
such that $\primR^g=\gen{\Sigma}$
for some  $\Sigma\in \Sigmas$.
In particular, the lattice $\primR$ is a root lattice.
\end{lemma}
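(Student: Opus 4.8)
The plan is to realise the subspace $(\primR)\sperp$ as the supporting linear subspace of a face of some Vinberg chamber, and then to carry that chamber onto $\Delta_0$ by an element of $\OG^+(\Lten)=W(\Lten)$; reading off the corresponding $\Sigma\in\Sigmas$ will produce the desired isometry. Throughout I use that $\primR$ is negative-definite, since $\primR\tensor\R=R\tensor\R$, so that $[\primR]\sperp$ has signature $(1,9-\operatorname{rank}\primR)$ and $(\primR)\sperp=\Pten\cap([\primR]\sperp\tensor\R)$ is a non-empty open cone in the subspace $[\primR]\sperp\tensor\R$. I also record that the roots contained in $\primR$ span $\primR\tensor\R$, because already the roots of $R$ do and $R\tensor\R=\primR\tensor\R$.

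First I would choose a point $w_0$ in the relative interior of $(\primR)\sperp$ that is \emph{generic}, meaning that $\intf{w_0,r}\ne 0$ for every root $r\notin\primR$. Such a $w_0$ exists by Baire category: for a root $r\notin\primR$ the hyperplane $(r)\sperp$ does not contain $(\primR)\sperp$, since otherwise $r$ would be orthogonal to all of $[\primR]\sperp\tensor\R$ and hence lie in $\primR$ by primitivity; thus $(r)\sperp\cap(\primR)\sperp$ is a proper subspace of $(\primR)\sperp$, and the countably many such subspaces cannot cover the open cone $(\primR)\sperp$. For this $w_0$, the roots $r$ with $\intf{w_0,r}=0$ are exactly the roots contained in $\primR$.

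Next I would examine the root-hyperplane arrangement near $w_0$. By local finiteness, a small neighbourhood $U$ of $w_0$ in $\Pten$ meets only the hyperplanes $(r)\sperp$ with $r\in\Roots(\primR)$; under the orthogonal decomposition $\Lten\tensor\R=(\primR\tensor\R)\oplus([\primR]\sperp\tensor\R)$ these are the preimages of the root hyperplanes of $\primR$ under the projection onto $\primR\tensor\R$. Hence, locally at $w_0$, the tessellation is the product of the $W(\primR)$-decomposition of $\primR\tensor\R$ into Weyl chambers with the full space $[\primR]\sperp\tensor\R$, and $(\primR)\sperp$ corresponds to the apex $\{0\}\times([\primR]\sperp\tensor\R)$, a common face of every local chamber. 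Fixing one Weyl chamber of $W(\primR)$ singles out a unique Vinberg chamber $\Delta$ with $w_0\in\Delta$ whose trace on $U$ is the matching local chamber. Then $(\primR)\sperp\cap\Delta$ contains the neighbourhood $(\primR)\sperp\cap U$ of $w_0$, so it contains a non-empty open subset of $(\primR)\sperp$; by the elementary fact recalled above, $F:=(\primR)\sperp\cap\Delta$ is a face of $\Delta$ with supporting linear subspace $(\primR)\sperp$, and $[F]\sperp=\primR$ by primitivity of $\primR$.

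Finally I would transport $F$ into $\Delta_0$. Setting $g:=\gamma(\Delta)^{-1}\in\OG^+(\Lten)$ gives $\Delta^g=\Delta_0$, so $F^g$ is a face of $\Delta_0$ and therefore equals $F_\Sigma$ for a unique $\Sigma\in\Sigmas$. Since $g$ is an isometry it commutes with taking orthogonal complements, whence $\primR^g=([F]\sperp)^g=[F^g]\sperp=[F_\Sigma]\sperp=\gen{\Sigma}$ by Lemma~\ref{lem:genSigma}; as $\gen{\Sigma}$ is generated by the roots in $\Sigma$, this also shows that $\primR$ is a root lattice. I expect the heart of the argument to be the middle step, namely confirming that $(\primR)\sperp$ really arises as the supporting subspace of a face and is not subdivided by further root hyperplanes. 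This is precisely what the genericity of $w_0$ (via the Baire argument) together with the local product structure of the arrangement are designed to guarantee; granting this, the passage to $\Delta_0$ and the identification with $\gen{\Sigma}$ are purely formal.
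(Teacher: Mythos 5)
Your proof is correct and follows essentially the same route as the paper: exhibit a Vinberg chamber $\Delta$ whose intersection with $(\primR)\sperp$ is a face with supporting subspace $(\primR)\sperp$, transport it by $\gamma(\Delta)^{-1}$ to a face $F_\Sigma$ of $\Delta_0$, and take orthogonal complements via Lemma~\ref{lem:genSigma}. The only difference is that your generic-point and local-product-structure argument spells out in detail the step the paper derives directly from the tessellation of $\Pten$ by Vinberg chambers.
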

\begin{proof}
Suppose that $R$ is generated by roots $r_1, \dots, r_n$.
%Since $[R]\sperp$ is a hyperbolic lattice,
%we see that
Then 
$$
\PPP(R):=(\{r_1, \dots, r_n\})\sperp=([R]\sperp\tensor \R )\cap \Pten
$$
is non-empty, 
because $\PPP(R)$ is a positive cone of the hyperbolic lattice $[R]\sperp$.
Since $\PPP(R)$ is contained in a hyperplane $(r_1)\sperp$,
we see that $\PPP(R)$ is disjoint from the interior of any Vinberg chamber.
Since $\Pten$ is tessellated by Vinberg chambers,
there exists a Vinberg chamber 
$\Delta$ such that $\PPP(R)\cap \Delta$ contains a non-empty open subset of $\PPP(R)$.
Therefore $\PPP(R)\cap \Delta$  is a face of $\Delta$.
%Let $g\in \OG^+(\Lten)$ be the isometry that maps $\Delta$ to $\Delta_0$.
Then $\gamma(\Delta)\inv\in \OG^+(\Lten)$ maps $\PPP(R)\cap \Delta$ to a face $F_{\Sigma}$ of $\Delta_0$
associated with some $\Sigma\in \SSS$.
Hence $\gamma(\Delta)\inv$ maps $\primR=[\PPP(R)\cap \Delta]\sperp$ to  $\gen{\Sigma}=[F_{\Sigma}]\sperp$.
\end{proof}
%
%Note that the first assertion of Theorem~\ref{thm:L10main} is now proved.
%Moreover we also obtain the following:
%
\begin{corollary}
The map $\Sigma\mapsto \gen{\Sigma}$ induces a surjection from $\Sigmas$ to $\NNN/\OG^+(\Lten)$.
\qed
\end{corollary}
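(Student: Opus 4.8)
The plan is to deduce the surjectivity directly from Lemma~\ref{lem:primR}, into which all of the substantive work has already been placed. First I would check that the asserted map is well defined at the level of orbit spaces. By Lemma~\ref{lem:genSigma}, every $\Sigma\in\Sigmas$ satisfies $\gen{\Sigma}\in\NNN$, so composing $\Sigma\mapsto\gen{\Sigma}$ with the natural projection $\NNN\to\NNN/\OG^+(\Lten)$ yields a well-defined map $\Sigmas\to\NNN/\OG^+(\Lten)$; this is the map whose surjectivity is claimed.

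For surjectivity I would take an arbitrary $N\in\NNN$ and exhibit a preimage. By the definition of $\NNN$, the lattice $N$ is a negative-definite root sublattice of $\Lten$ that is primitive, so its primitive closure is $N$ itself. Applying Lemma~\ref{lem:primR} with $R=N$ then produces an isometry $g\in\OG^+(\Lten)$ together with a subset $\Sigma\in\Sigmas$ such that $N^g=\gen{\Sigma}$. Thus $N$ and $\gen{\Sigma}$ lie in one and the same $\OG^+(\Lten)$-orbit, so the class of $N$ in $\NNN/\OG^+(\Lten)$ coincides with the image of $\Sigma$. Since $N$ was arbitrary, the induced map is surjective.

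I expect no genuine obstacle in the corollary itself: the geometric core---that $\OG^+(\Lten)=W(\Lten)$ tessellates $\Pten$ by Vinberg chambers, and that for a root sublattice $R$ the subspace $\PPP(R)\cap\Delta$ lies in a face of some Vinberg chamber $\Delta$ and is therefore carried onto a face $F_\Sigma$ of $\Delta_0$ by an element of $\OG^+(\Lten)$---has already been discharged in Lemmas~\ref{lem:genSigma} and~\ref{lem:primR}. The only matters that demand care are bookkeeping ones: to invoke Lemma~\ref{lem:primR} one must note that an element of $\NNN$ equals its own primitive closure, and, because $\OG^+(\Lten)$ acts on the right, the equality $N^g=\gen{\Sigma}$ has to be read as saying that $N$ and $\gen{\Sigma}$ belong to the same orbit rather than that they are literally equal.
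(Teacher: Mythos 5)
Your proof is correct and follows exactly the route the paper intends: the corollary is an immediate consequence of Lemma~\ref{lem:genSigma} (well-definedness, since $\gen{\Sigma}\in\NNN$) and Lemma~\ref{lem:primR} applied to an arbitrary $N\in\NNN$, which equals its own primitive closure by primitivity, yielding $N^g=\gen{\Sigma}$ for some $g\in\OG^+(\Lten)$ and $\Sigma\in\Sigmas$. The paper leaves this deduction implicit, and your write-up supplies precisely the missing bookkeeping.
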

For $\Sigma\in \Sigmas$, we put 
$$
\PPP(\Sigma):=(\Sigma)\sperp=([\Sigma]\sperp\tensor \R )\cap \Pten.
$$
\begin{definition}
For $\Sigma, \Sigma\sprime\in \Sigmas$,
we write $\Sigma\sim \Sigma\sprime$
if there exists an isometry $g\in \OG^+(\Lten)$
such that $\gen{\Sigma}^g=\gen{\Sigma\sprime}$, or equivalently, $\PPP(\Sigma)^g=\PPP(\Sigma\sprime)$.
\end{definition}
In the following, 
we fix an element $\Sigma\in \Sigmas$,
and calculate the set $\shortset{\Sigma\sprime\in \Sigmas}{\Sigma\sprime\sim\Sigma}$
and a finite generating set of the stabilizer subgroup 
$$
\Stab(\gen{\Sigma}, \Lten):=\shortset{g\in \OG^+(\Lten)}{\gen{\Sigma}^g=\gen{\Sigma}}=\shortset{g\in \OG^+(\Lten)}{\PPP(\Sigma)^g=
\PPP(\Sigma)}
$$ 
of $\gen{\Sigma}$ in $\OG^+(\Lten)$.
We put
$$
\VVVS:=\set{\Delta\in \VVV}{\textrm{$\PPP(\Sigma)\cap\Delta$ contains a non-empty open subset of $\PPP(\Sigma)$}}.
$$
\begin{definition}\label{def:induced}
If $\Delta\in \VVVS$, 
then $\PPP(\Sigma)\cap\Delta$ is a chamber in the positive cone $\PPP(\Sigma)$ of 
the hyperbolic lattice $[\Sigma]\sperp$.
A closed subset $D$ of $\PPP(\Sigma)$ is said to be an \emph{induced chamber} if $D$ is written as $\PPP(\Sigma)\cap\Delta$ 
by some $\Delta\in \VVVS$.
\end{definition}
By definition, the positive cone  $\PPP(\Sigma)$ of the hyperbolic lattice $[\Sigma]\sperp$ is covered by induced chambers 
in such a way that, if $D$ and $D\sprime$ are distinct induced chambers, 
then their interiors in $\PPP(\Sigma)$ are disjoint.
Let $\Delta$ be an arbitrary element of $\VVVS$.
Looking at the tessellation of $\Pten$ by Vinberg chambers
locally around an interior point of the induced chamber  $\PPP(\Sigma)\cap\Delta$,   %of $\PPP(\Sigma)$,
we obtain the following:
\begin{lemma}\label{lem:WSigma}
Let $W(\Sigma, \Lten)$ denote  the subgroup of $\OG^+(\Lten)$ generated by
the reflections $s_r$ associated with the roots $r\in \Sigma$.
%$\shortset{s_r}{r\in \Sigma}$.
%the reflections $s_r$, where $r$ runs through $\Sigma$.
Let $\Delta$ be an element of $\VVVS$.
Then $W(\Sigma, \Lten)$ acts on the set
$\shortset{\Delta\sprime\in\VVVS}{\PPP(\Sigma)\cap\Delta=\PPP(\Sigma)\cap\Delta\sprime}$
%$$
%\set{\Delta\sprime\in\VVVS}{\PPP(\Sigma)\cap\Delta=\PPP(\Sigma)\cap\Delta\sprime}
%$$
simple-transitively.
\qed
\end{lemma}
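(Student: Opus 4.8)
The plan is to reduce the statement to the classical fact that the Weyl group of a finite root system acts simply transitively on its chambers, by examining the tessellation of $\Pten$ by Vinberg chambers \emph{locally} near an interior point of the induced chamber $D:=\PPP(\Sigma)\cap\Delta$. First I would fix a point $p$ in the relative interior of $D$ in $\PPP(\Sigma)$. Since the induced chambers tessellate $\PPP(\Sigma)$ with pairwise disjoint interiors and $p$ lies in the interior of $D$, a chamber $\Delta\sprime\in\VVVS$ satisfies $\PPP(\Sigma)\cap\Delta\sprime=D$ precisely when $p\in\Delta\sprime$; moreover I will see below that every Vinberg chamber whose closure contains $p$ automatically lies in $\VVVS$. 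Thus the set in the statement is
$$
S_\Delta:=\set{\Delta\sprime\in\VVV}{p\in\Delta\sprime}.
$$
Next I would determine the root hyperplanes through $p$. As $p$ lies in the relative interior of $D$, no wall of the induced tessellation of $\PPP(\Sigma)$ passes through $p$, so a root $r\in\Roots(\Lten)$ satisfies $p\in(r)\sperp$ if and only if $(r)\sperp\supseteq\PPP(\Sigma)$, i.e.\ $r\perp[\Sigma]\sperp$, i.e.\ $r\in([\Sigma]\sperp)\sperp=\gen{\Sigma}$, the last equality holding because $\gen{\Sigma}$ is primitive in $\Lten$. Hence the root hyperplanes through $p$ are exactly $\set{(r)\sperp}{r\in\Roots(\gen{\Sigma})}$.

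Then, using the local finiteness of the arrangement $\set{(r)\sperp}{r\in\Roots(\Lten)}$, I would pick a neighborhood $U$ of $p$ in $\Pten$ so small that the only root hyperplanes meeting $U$ are those through $p$. On $U$ the Vinberg tessellation therefore coincides with the chamber decomposition of $\set{(r)\sperp}{r\in\Roots(\gen{\Sigma})}$. I would then use the orthogonal decomposition $\Lten\tensor\R=(\gen{\Sigma}\tensor\R)\oplus([\Sigma]\sperp\tensor\R)=:V_1\oplus V_2$, valid since $\gen{\Sigma}$ is negative definite and hence nondegenerate. For $r\in\Roots(\gen{\Sigma})\subset V_1$ the hyperplane $(r)\sperp$ depends only on the $V_1$-component, being the preimage in $\Pten$ of $\set{y\in V_1}{\intf{y,r}=0}$ under the orthogonal projection onto $V_1$. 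Thus near $p$ the arrangement is a product, and its chambers are (connected components of $(V_1)\spcirc$, i.e.\ the Weyl chambers of the finite root system $\Roots(\gen{\Sigma})$) times a ball in $V_2$. Because every closed chamber of this central arrangement in $V_1$ contains the apex $0$, each Vinberg chamber meeting $p$ indeed meets $\PPP(\Sigma)$ in a full-dimensional neighborhood of $p$ and so lies in $\VVVS$; this also yields a bijection between $S_\Delta$ and the set of connected components of $(\gen{\Sigma}\tensor\R)\spcirc$.

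Finally I would transport the group action across this bijection. Each generator $s_r$ ($r\in\Sigma$) of $W(\Sigma,\Lten)$ acts on $V_1$ as the corresponding reflection and fixes $V_2$ pointwise; since $W(\Sigma,\Lten)\subseteq\OG^+(\Lten)=W(\Lten)$ permutes Vinberg chambers and fixes $p$, it preserves $S_\Delta$. Restriction to $V_1$ identifies $W(\Sigma,\Lten)$ with the Weyl group $W(\gen{\Sigma})$ of $\Roots(\gen{\Sigma})$: the map is surjective because $\Sigma$ is an $\ADE$-basis of $\gen{\Sigma}$, and injective because an element acting trivially on both $V_1$ and $V_2$ is the identity. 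The bijection $S_\Delta\isom\shortset{\text{components of }(\gen{\Sigma}\tensor\R)\spcirc}{}$ is equivariant for this identification, so the simple transitivity of $W(\gen{\Sigma})$ on those components (recalled in Subsection~\ref{subsec:negdefroot}, cf.~\cite[Chapter~1]{HumphreysBook}) transfers to simple transitivity of $W(\Sigma,\Lten)$ on $S_\Delta$.

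The main obstacle is the local-to-global bookkeeping in the middle step: verifying that a sufficiently small $U$ sees only the finitely many root hyperplanes through $p$, that the traces on $U$ of Vinberg chambers are exactly the chambers of the finite arrangement, and—most delicately—that a Vinberg chamber whose closure merely contains $p$ genuinely lies in $\VVVS$ and induces $D$, rather than touching $\PPP(\Sigma)$ only along a lower-dimensional transverse face. The product structure $V_1\oplus V_2$ together with the observation that every closed Weyl chamber contains the apex is precisely what rules out this degeneracy.
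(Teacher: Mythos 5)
Your proposal is correct and takes essentially the same route as the paper: the paper's entire proof consists of the remark that the lemma follows by ``looking at the tessellation of $\Pten$ by Vinberg chambers locally around an interior point of the induced chamber $\PPP(\Sigma)\cap\Delta$,'' which is precisely the local analysis you carry out. Your write-up simply makes explicit the details the paper leaves implicit --- the identification of the root hyperplanes through the interior point with $\Roots(\gen{\Sigma})$ via primitivity of $\gen{\Sigma}$, the local product structure $(\gen{\Sigma}\tensor\R)\oplus([\Sigma]\sperp\tensor\R)$, and the equivariant bijection onto the Weyl chambers of the finite root system, whose simple transitivity under $W(\gen{\Sigma})\cong W(\Sigma,\Lten)$ gives the claim.
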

Note that,
if $\Delta\in \VVVS$,
then $\gamma(\Delta)\inv$ maps the induced chamber $\PPP(\Sigma)\cap\Delta$ of $\PPP(\Sigma)$ 
to a face of $\Delta_0$.
Hence we can define  a mapping $\sigma\colon \VVVS\to \Sigmas$ by the property
\begin{equation}\label{eq:defsigma}
(\PPP(\sigma(\Delta))\cap \Delta_0)^{\gamma(\Delta)}=\PPP(\Sigma)\cap\Delta,
\end{equation}
or equivalently, by the equality ${F_{\sigma(\Delta)}}^{\gamma(\Delta)}=\PPP(\Sigma)\cap\Delta$.
Taking the orthogonal complement of the both sides,
we obtain
\begin{equation}\label{eq:defsigma2}
\gen{\sigma(\Delta)}^{\gamma(\Delta)}=\gen{\Sigma}
\end{equation}
\begin{lemma}\label{lem:sigmaVVVS}
We have
 $\shortset{\Sigma\sprime\in\Sigmas}{\Sigma\sprime\sim\Sigma}=\shortset{\sigma(\Delta)}{\Delta\in\VVVS}$.
\end{lemma}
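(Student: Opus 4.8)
The plan is to establish the two set-theoretic inclusions separately. The inclusion $\supseteq$ will follow immediately from the defining property~\eqref{eq:defsigma2} of $\sigma$, while for $\subseteq$ I would transport the face $F_{\Sigma\sprime}$ of $\Delta_0$ along an isometry realizing the relation $\sim$ and then read off the answer from the bijection $\Sigma\mapsto F_{\Sigma}$.

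For $\supseteq$, I would fix an arbitrary $\Delta\in\VVVS$. Since $\sigma$ takes values in $\Sigmas$ and~\eqref{eq:defsigma2} gives $\gen{\sigma(\Delta)}^{\gamma(\Delta)}=\gen{\Sigma}$ with $\gamma(\Delta)\in\OG^+(\Lten)$, this is exactly the assertion that $\sigma(\Delta)\sim\Sigma$; hence $\sigma(\Delta)\in\shortset{\Sigma\sprime\in\Sigmas}{\Sigma\sprime\sim\Sigma}$.

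For $\subseteq$, I would start from $\Sigma\sprime\in\Sigmas$ with $\Sigma\sprime\sim\Sigma$ and produce a chamber $\Delta\in\VVVS$ with $\sigma(\Delta)=\Sigma\sprime$. By the definition of $\sim$ there is an isometry $g\in\OG^+(\Lten)$ with $\PPP(\Sigma\sprime)^{g}=\PPP(\Sigma)$, and I would set $\Delta:=\Delta_0^{g}$. Since $\OG^+(\Lten)=W(\Lten)$, this $\Delta$ is a Vinberg chamber, and $\gamma(\Delta)=g$ by the definition of $\gamma$ as the inverse of $g\mapsto\Delta_0^{g}$. Because $\Sigma\sprime\in\Sigmas$, the face $F_{\Sigma\sprime}=\PPP(\Sigma\sprime)\cap\Delta_0$ of the simplicial cone $\Delta_0$ has supporting linear subspace $\PPP(\Sigma\sprime)$ and therefore contains a non-empty open subset of it; applying the isometry $g$ shows $F_{\Sigma\sprime}^{g}=\PPP(\Sigma\sprime)^{g}\cap\Delta_0^{g}=\PPP(\Sigma)\cap\Delta$ contains a non-empty open subset of $\PPP(\Sigma\sprime)^{g}=\PPP(\Sigma)$, so $\Delta\in\VVVS$. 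Finally, comparing the defining relation $F_{\sigma(\Delta)}^{\gamma(\Delta)}=\PPP(\Sigma)\cap\Delta$ with $F_{\Sigma\sprime}^{g}=\PPP(\Sigma)\cap\Delta$ and using $\gamma(\Delta)=g$ yields $F_{\sigma(\Delta)}=F_{\Sigma\sprime}$, and the bijectivity of $\Sigma\mapsto F_{\Sigma}$ on $\Sigmas$ forces $\sigma(\Delta)=\Sigma\sprime$.

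I expect no serious obstacle in this argument; the two places demanding care are keeping the right action consistent, so that $\gamma(\Delta_0^{g})=g$, and invoking that an isometry preserves the property of containing a non-empty open subset of the relevant supporting subspace, which is precisely what certifies $\Delta_0^{g}\in\VVVS$. Both the well-definedness of $\sigma$ and the fact that $\PPP(\Sigma)\cap\Delta$ is a face of $\Delta$ for each $\Delta\in\VVVS$ have already been recorded above, so they may simply be cited rather than reproved.
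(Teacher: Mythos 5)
Your proof is correct and follows essentially the same route as the paper: the inclusion $\supseteq$ from~\eqref{eq:defsigma2}, and for $\subseteq$ setting $\Delta:=\Delta_0^{g}$ so that $\gamma(\Delta)=g$, verifying $\Delta\in\VVVS$ via the open-subset condition, and concluding $\sigma(\Delta)=\Sigma\sprime$ from the defining property of $\sigma$. The extra details you supply (why $F_{\Sigma\sprime}$ contains an open subset of its supporting subspace, and the appeal to injectivity of $\Sigma\mapsto F_{\Sigma}$) are just explicit spellings-out of steps the paper leaves implicit.
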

\begin{proof}
By~\eqref{eq:defsigma2},
we have $\sigma(\Delta)\sim \Sigma$
for any $\Delta\in \VVVS$.
Suppose that $\Sigma\sprime\sim\Sigma$,
and let $h\in \OG^+(\Lten)$ be an isometry such that $\PPP(\Sigma\sprime)^h=\PPP(\Sigma)$.
We put $\Delta:=\Delta_0^h$ so that $\gamma(\Delta)=h$.
Since  $(\PPP(\Sigma\sprime)\cap\Delta_0)^h=\PPP(\Sigma)\cap \Delta$
contains a non-empty open subset of $\PPP(\Sigma)$, we have $\Delta\in \VVVS$
and  $\Sigma\sprime=\sigma(\Delta)$ by the defining property~\eqref{eq:defsigma} of $\sigma$.
\end{proof}
Using Lemma~\ref{lem:WSigma}, we can prove the following:
\begin{lemma}\label{lem:sigmaWSigma}
For $\Delta, \Delta\sprime\in \VVVS$, 
if $\PPP(\Sigma)\cap\Delta=\PPP(\Sigma)\cap\Delta\sprime$,
then $\sigma(\Delta)=\sigma(\Delta\sprime)$.
\qed
\end{lemma}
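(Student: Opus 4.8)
The plan is to reduce the statement to the simple-transitivity asserted in Lemma~\ref{lem:WSigma}, together with the injectivity of the correspondence $\Sigma\mapsto\gen{\Sigma}$ on $\Sigmas$. Suppose $\Delta,\Delta\sprime\in\VVVS$ share the induced chamber
$D:=\PPP(\Sigma)\cap\Delta=\PPP(\Sigma)\cap\Delta\sprime$. Then both $\Delta$ and $\Delta\sprime$ lie in the set $\shortset{\Delta\spprime\in\VVVS}{\PPP(\Sigma)\cap\Delta=\PPP(\Sigma)\cap\Delta\spprime}$, so Lemma~\ref{lem:WSigma} produces a (unique) $w\in W(\Sigma,\Lten)$ with $\Delta^{w}=\Delta\sprime$. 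Since $g\mapsto\Delta_0^{g}$ is a bijection and $\OG^+(\Lten)$ acts on the right, the chain $\Delta_0^{\gamma(\Delta)\,w}=(\Delta_0^{\gamma(\Delta)})^{w}=\Delta^{w}=\Delta\sprime=\Delta_0^{\gamma(\Delta\sprime)}$ forces
$$
\gamma(\Delta\sprime)=\gamma(\Delta)\,w.
$$

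The key observation is that $w$ stabilizes $\gen{\Sigma}$. Indeed, each generator $s_r$ of $W(\Sigma,\Lten)$ with $r\in\Sigma$ preserves $\gen{\Sigma}$, because $r\in\gen{\Sigma}$ and $s_r$ adds an integral multiple of $r$ to each vector; equivalently, $s_r$ fixes $\PPP(\Sigma)=(\Sigma)\sperp$ pointwise. Hence $w\in\Stab(\gen{\Sigma},\Lten)$, that is, $\gen{\Sigma}^{w}=\gen{\Sigma}$. I would then apply the defining relation~\eqref{eq:defsigma2} to $\Delta\sprime$ and substitute $\gamma(\Delta\sprime)=\gamma(\Delta)\,w$:
\begin{equation*}
\gen{\sigma(\Delta\sprime)}^{\gamma(\Delta)\,w}=\gen{\Sigma},
\qquad\text{hence}\qquad
\gen{\sigma(\Delta\sprime)}^{\gamma(\Delta)}=\gen{\Sigma}^{w\inv}=\gen{\Sigma}.
\end{equation*}
Comparing this with $\gen{\sigma(\Delta)}^{\gamma(\Delta)}=\gen{\Sigma}$, which is~\eqref{eq:defsigma2} for $\Delta$, and applying $\gamma(\Delta)\inv$ on the right yields $\gen{\sigma(\Delta\sprime)}=\gen{\sigma(\Delta)}$.

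Finally, I would close the argument by recovering $\Sigma$ from $\gen{\Sigma}$: since $E_{10}$ is a basis of $\Lten$, a basis vector $e_i$ can lie in $\gen{\Sigma}$ only if $e_i\in\Sigma$, so $\Sigma=E_{10}\cap\gen{\Sigma}$ and the map $\Sigma\mapsto\gen{\Sigma}$ is injective on $\Sigmas$. Therefore $\gen{\sigma(\Delta\sprime)}=\gen{\sigma(\Delta)}$ gives $\sigma(\Delta)=\sigma(\Delta\sprime)$, as required. The only genuine content is imported from Lemma~\ref{lem:WSigma}; once the element $w\in W(\Sigma,\Lten)$ with $\Delta^{w}=\Delta\sprime$ is in hand, the main points to get right are the right-action bookkeeping $\gamma(\Delta\sprime)=\gamma(\Delta)\,w$ and the fact that $w$ fixes $\gen{\Sigma}$, after which the injectivity of $\Sigma\mapsto\gen{\Sigma}$ finishes the proof with no further computation.
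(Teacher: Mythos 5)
Your proof is correct and follows exactly the route the paper intends: the paper states this lemma with the remark ``Using Lemma~\ref{lem:WSigma}, we can prove the following'' and omits the details, and you supply precisely those details --- the unique $w\in W(\Sigma,\Lten)$ with $\Delta^{w}=\Delta\sprime$, the bookkeeping $\gamma(\Delta\sprime)=\gamma(\Delta)w$, the fact that $w$ stabilizes $\gen{\Sigma}$, and the injectivity of $\Sigma\mapsto\gen{\Sigma}$ on subsets of the basis $E_{10}$. All steps, including the right-action conventions, check out.
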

%%
%\begin{proof}
%By Lemma~\ref{lem:WSigma}, we have $h\in W(\Sigma, \Lten)$ such that $\Delta\sprime=\Delta^h$.
%Then $\gamma(\Delta\sprime)$ is equal to $\gamma(\Delta)h$.
%Since $\PPP(\Sigma)^h=\PPP(\Sigma)$, we have
%$$
%(\PPP(\sigma(\Delta))\cap \Delta_0)^{\gamma(\Delta\sprime)}=(\PPP(\Sigma)\cap \Delta)^h=\PPP(\Sigma)\cap \Delta\sprime,
%$$
%and thus $\sigma(\Delta\sprime)=\sigma(\Delta)$ holds by the defining property~\eqref{eq:defsigma2} of $\sigma$.
%\end{proof}
%
Let $\Delta$ be an element of $\VVV_{\Sigma}$, and let $D:=\PPP(\Sigma)\cap\Delta$ be the corresponding induced chamber of $\PPP(\Sigma)$.
Looking at the isomorphism 
$$ 
F_{\sigma(\Delta)}=(\PPP(\sigma(\Delta))\cap \Delta_0)\isom \PPP(\Sigma)\cap\Delta=D
$$
induced by $\gamma(\Delta)\in \OG^+(\Lten)$, 
we see that the set of walls of   $D$  is equal to
$$
\set{{F_{\Xi}}^{\gamma(\Delta)}}{\textrm{$\Xi\in \Sigmas$ satisfies $\Sigma\subset \Xi$ and $|\Sigma|+1=|\Xi|$}}.
$$
\begin{definition}\label{def:adjacent}
Let $w:={F_{\Xi}}^{\gamma(\Delta)}$ be a wall of 
the induced chamber $D$ of $\PPP(\Sigma)$.
An induced chamber \emph{adjacent to $D$ across the wall $w$} is the unique 
 induced chamber $D\sprime$  of $\PPP(\Sigma)$ such that 
$D\ne D\sprime$  and that $w$ is a wall of $D\sprime$.
We say that an induced chamber $D\sprime$ is \emph{adjacent to $D$}
if $D\sprime$ is adjacent to $D$ across some  wall of $D$.
\end{definition}
%
%It is obvious that an  induced chamber adjacent to $D$ across the wall $w$ is unique.
%
\begin{algorithm}\label{algo:adjacent}
Let $D$ and $w$ be as in Definition~\ref{def:adjacent}.
By the following method, 
we can obtain a Vinberg chamber  $\Delta\sprime\in \VVVS$ and 
an isometry $\gamma(\Delta\sprime)\in \OG^+(\Lten)$ such that $\PPP(\Sigma)\cap\Delta\sprime$ is 
the  induced chamber adjacent to $D$ across  $w$. %={F_{\Xi}}^{\gamma(\Delta)}$.
Let $\Xi$ be the element of $\Sigmas$ such that $w={F_{\Xi}}^{\gamma(\Delta)}$, 
and let $W(\Xi, \Lten)$ denote the subgroup of $\OG^+(\Lten)$ generated by the reflections $s_r$
associated with  the roots $r\in \Xi$.
Let
$\xi$ be the longest element of the Coxeter group $W(\Xi, \Lten)\cong W(\gen{\Xi})$,
which can be calculated by~Algorithm~\ref{algo:oppR}.
Then, if $\UUU$ is a sufficiently small neighborhood in $\Pten$ of a general point of 
the face  $F_{\Xi}$  of $\Delta_0$, we have
$$
\Delta_0^{\xi}\cap \UUU=\set{x\in \UUU}{\intf{x, r}\le 0\;\;\textrm{for all}\;\; r\in \Xi},
$$
that is, the Vinberg chamber $\Delta_0^{\xi}$ is opposite to $\Delta_0$ with respect to 
the face $F_{\Xi}$.
Then $\Delta\sprime:=\Delta_0^{\xi\gamma(\Delta)}$
is the desired Vinberg chamber, and 
we have $\gamma(\Delta\sprime)=\xi\gamma(\Delta)$.
\end{algorithm}
We present the main algorithm of this section,
which is based on the generalized Borcherds method~(\cite{Borcherds1987},~\cite{Borcherds1998},~\cite{ShimadaIMRN}).
\begin{algorithm}\label{algo:main}
Let an element $\Sigma$ of $\Sigmas$ be given.
This algorithm calculates the set $\shortset{\Sigma\sprime\in \Sigmas}{\Sigma\sprime\sim\Sigma}$
and a generating set  of the stabilizer subgroup $\Stab(\gen{\Sigma}, \Lten)$ of $\gen{\Sigma}$ in $\OG^+(\Lten)$.
We set
$$
\DDD:=[\Delta_0],
\quad
\gamma(\DDD):=[\id], 
\quad
\sigma(\DDD):=[\Sigma],
\quad
\GGG:=\{\},
\quad
i:=0,
$$
where $\id$ is the identity of $\OG^+(\Lten)$.
During the calculation,
we have the following:
\begin{enumerate}[(i)]
\item $\DDD$ is a list $[\Delta_0, \dots, \Delta_j]$ of elements of $\VVVS$
such that $\sigma(\Delta_{\mu})\ne \sigma(\Delta_{\nu})$ if $\mu\ne \nu$,
where $\sigma\colon \VVV_{\Sigma}\to \Sigmas$ is defined by~\eqref{eq:defsigma}, 
\item $\gamma(\DDD)$ is the list $[\gamma(\Delta_0), \dots, \gamma(\Delta_j)]$ of elements of $\OG^+(\Lten)$,
\item $\sigma(\DDD)$ is the list $[\sigma(\Delta_0), \dots, \sigma(\Delta_j)]$  of distinct elements  of $\SSS$, and
\item $\GGG$ is a set of elements of $\Stab(\gen{\Sigma}, \Lten)$.
\end{enumerate}
While $i+1\le j+1=|\DDD|$, we execute the following calculation.
\begin{enumerate}[(1)]
\item Let $\Delta_i$ be the $(i+1)$st element of $\DDD$.
By~Algorithm~\ref{algo:adjacent}, 
we calculate  Vinberg chambers $\Delta\spar{1}, \dots, \Delta\spar{k}$  in $\VVVS$
such that
$\shortset{\PPP(\Sigma)\cap\Delta\spar{\kappa}}{\kappa=1, \dots, k}$ is the set of induced chambers in $\PPP(\Sigma)$ adjacent  to 
the induced chamber $\PPP(\Sigma)\cap\Delta_i$.
Note that, in~Algorithm~\ref{algo:adjacent},
we also calculate
$\gamma(\Delta\spar{\kappa})\in \OG^+(\Lten)$ for $\kappa=1, \dots, k$. % by using $\gamma(\Delta_i)$.
\item For $\kappa=1, \dots, k$, we calculate   
$\Sigma\spar{\kappa}:=\sigma(\Delta\spar{\kappa})$ by $\gamma(\Delta\spar{\kappa})$.
\begin{itemize}
\item[(2-1)]
If $\Sigma\spar{\kappa}\notin \sigma(\DDD)$,
then we add $\Delta\spar{\kappa}$ to $\DDD$,  $\gamma(\Delta\spar{\kappa})$ to $\gamma(\DDD)$,
and $\Sigma\spar{\kappa}$ to $\sigma(\DDD)$ at the end of each list.
\item[(2-2)]
If $\Sigma\spar{\kappa}$ appears at the $(m+1)$st position of $\sigma(\DDD)$,
%where $m+1\le |\DDD|=|\sigma(\DDD)|$, 
then we have $\Sigma\spar{\kappa}=\sigma(\Delta_m)$, where $\Delta_m\in \DDD$.
We add $g:=\gamma(\Delta\spar{\kappa})\inv \cdot \gamma(\Delta_m)$ to $\GGG$.
Note that, since $\PPP(\Sigma\spar{\kappa})^{\gamma(\Delta\spar{\kappa})}=\PPP(\Sigma)=\PPP(\sigma(\Delta_m))^{\gamma(\Delta_m)}$,
we have $g\in \Stab(\gen{\Sigma}, \Lten)$.
\end{itemize}
\item We increment $i$ to $i+1$.
\end{enumerate}
Since $|\DDD|=|\sigma(\DDD)|$ cannot exceed $|\Sigmas|=1021$,
this algorithm terminates.
When it terminates, we output 
 $\DDD$ as $\DDD_{\Sigma}$, 
 $\sigma(\DDD)$ as $\sigma(\DDD_{\Sigma})$, 
and $\GGG$ as $\GGG_{\Sigma}$.
\end{algorithm}
%
%\begin{remark}
%In the actual computation,
%the list $\DDD=[\Delta_0, \dots, \Delta_j]$
%is expressed as the list $[\,\id, \gamma(\Delta_1),  \dots, \gamma(\Delta_j)]$
%of matrices in $\OG^+(\Lten)$.
%\end{remark}
%
%
\begin{proposition}\label{prop:algomain}
We have $\sigma(\DDD_{\Sigma})=\shortset{\Sigma\sprime\in \Sigmas}{\Sigma\sprime\sim\Sigma}$.
Moreover,  the finite set  $\GGG_{\Sigma}$ together with $W(\Sigma, \Lten)$ generates $\Stab(\gen{\Sigma}, \Lten)$.
\end{proposition}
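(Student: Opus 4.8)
The plan is to read Algorithm~\ref{algo:main} as a breadth-first exploration of the tessellation of the positive cone $\PPP(\Sigma)$ of $[\Sigma]\sperp$ by induced chambers, and to deduce both assertions from a chamber-walking argument resting on two facts. \emph{Fact~A}: for $\Delta,\Delta\sprime\in\VVVS$ with induced chambers $D=\PPP(\Sigma)\cap\Delta$ and $D\sprime=\PPP(\Sigma)\cap\Delta\sprime$, one has $\sigma(\Delta)=\sigma(\Delta\sprime)$ if and only if $D^h=D\sprime$ for some $h\in\Stab(\gen{\Sigma},\Lten)$. For the forward direction take $h:=\gamma(\Delta)\inv\gamma(\Delta\sprime)$: by the defining property~\eqref{eq:defsigma} of $\sigma$, $\gamma(\Delta)\inv$ sends $D$ to the face $F_{\sigma(\Delta)}$ of $\Delta_0$ and $\gamma(\Delta\sprime)$ sends $F_{\sigma(\Delta\sprime)}=F_{\sigma(\Delta)}$ to $D\sprime$, while~\eqref{eq:defsigma2} shows $h$ preserves $\gen{\Sigma}$; for the reverse direction use $\gamma(\Delta^h)=\gamma(\Delta)h$ together with Lemma~\ref{lem:sigmaWSigma}. \emph{Fact~B}: the stabilizer in $\Stab(\gen{\Sigma},\Lten)$ of any single induced chamber $D$ equals $W(\Sigma,\Lten)$; indeed $W(\Sigma,\Lten)$ fixes $\PPP(\Sigma)$ pointwise, and conversely if $h$ fixes $D$ then Lemma~\ref{lem:WSigma} produces $w\in W(\Sigma,\Lten)$ with $\Delta^h=\Delta^w$, so $hw\inv$ fixes the Vinberg chamber $\Delta$ and hence equals $\id$, because $\OG^+(\Lten)=W(\Lten)$ forces $\Aut(\Delta)=\{\id\}$.

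For the first assertion I would observe that the induced chambers form a locally finite decomposition of the connected cone $\PPP(\Sigma)$, so their adjacency graph is connected. Passing to $\Stab(\gen{\Sigma},\Lten)$-orbits and using Fact~A to identify orbits with $\sigma$-values, I obtain a connected graph whose vertex set is, by Lemma~\ref{lem:sigmaVVVS}, precisely $\shortset{\Sigma\sprime\in\Sigmas}{\Sigma\sprime\sim\Sigma}$. Since Algorithm~\ref{algo:main} eventually processes every chamber it places in $\DDD$ and, through Algorithm~\ref{algo:adjacent}, inspects all of that chamber's neighbours, it performs a complete search of this graph starting from the vertex $\Sigma$; hence $\sigma(\DDD_{\Sigma})$ is the whole vertex set. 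The reverse containment $\sigma(\DDD_{\Sigma})\subseteq\shortset{\Sigma\sprime\in\Sigmas}{\Sigma\sprime\sim\Sigma}$ is immediate from~\eqref{eq:defsigma2}.

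For the second assertion, let $G\sprime$ be the subgroup of $\Stab(\gen{\Sigma},\Lten)$ generated by $W(\Sigma,\Lten)$ and $\GGG_{\Sigma}$, and write $D_m:=\PPP(\Sigma)\cap\Delta_m$ for the representatives in $\DDD_{\Sigma}$. Given $h\in\Stab(\gen{\Sigma},\Lten)$, I choose a path $D_0=A_0,A_1,\dots,A_N=D_0^{h}$ of pairwise adjacent induced chambers (possible by connectivity) and prove by induction that each $A_k$ equals $D_{m_k}^{g_k}$ for some $g_k\in G\sprime$, where $D_{m_k}$ is the representative of the orbit of $A_k$. In the inductive step, $A_{k+1}^{g_k\inv}$ is adjacent to $D_{m_k}$, hence coincides with one of the neighbours $\PPP(\Sigma)\cap\Delta\spar{\kappa}$ computed while the algorithm processes $\Delta_{m_k}$; if its $\sigma$-value is new then $\Delta\spar{\kappa}$ is itself a representative and $g_{k+1}:=g_k$ works, whereas if its $\sigma$-value already occurs at position $m\sprime$ the algorithm has recorded $g:=\gamma(\Delta\spar{\kappa})\inv\gamma(\Delta_{m\sprime})\in\GGG_{\Sigma}$ with $(\PPP(\Sigma)\cap\Delta\spar{\kappa})^{g}=D_{m\sprime}$, so $g_{k+1}:=g\inv g_k\in G\sprime$ works. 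Taking $k=N$ and noting, by Fact~A, that the orbit of $D_0^{h}$ has representative $D_0$, I obtain $D_0^{h}=D_0^{g_N}$ with $g_N\in G\sprime$; then $g_N h\inv$ fixes $D_0$, so Fact~B places it in $W(\Sigma,\Lten)\subseteq G\sprime$, whence $h\in G\sprime$ and $G\sprime=\Stab(\gen{\Sigma},\Lten)$.

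The main obstacle I anticipate is the interplay in this induction between Fact~B and the transport of chambers: lifting a path of induced chambers to isometries is ambiguous exactly by the choice of Vinberg chamber lying over each induced chamber, and one must check that this ambiguity is invariably absorbed into $W(\Sigma,\Lten)$. This is precisely where the triviality of $\Aut(\Delta)$ — equivalently $\OG^+(\Lten)=W(\Lten)$, valid because the diagram in Figure~\ref{fig:E10} has no symmetries — is indispensable. By comparison, the connectivity of the induced-chamber graph and the completeness of the neighbour enumeration in Algorithm~\ref{algo:adjacent} are routine inputs.
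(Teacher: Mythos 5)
Your proposal is correct and is essentially the paper's own argument: both proofs walk along an adjacent sequence of induced chambers, transport each chamber back to a representative in $\DDD_{\Sigma}$ using the elements recorded in step (2-2) together with $W(\Sigma, \Lten)$, and both rest on Lemma~\ref{lem:WSigma}, Lemma~\ref{lem:sigmaVVVS}, and the triviality of the stabilizer of a Vinberg chamber (equivalently $\OG^+(\Lten)=W(\Lten)$). The only difference is organizational: the paper proves the single master claim that every $\Delta\in\VVVS$ can be moved into $\DDD_{\Sigma}$ by the subgroup generated by $\GGG_{\Sigma}$ and $W(\Sigma, \Lten)$ (a minimal-counterexample version of your path induction, run on Vinberg chambers with Lemma~\ref{lem:WSigma} invoked inside the inductive step), and derives both assertions from it, whereas you run the induction directly on induced chambers and absorb the Vinberg-chamber ambiguity only at the end via your Fact~B.
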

\begin{proof}
Let $G$ be the subgroup of $\OG^+(\Lten)$ generated by the union of $\GGG_{\Sigma}$ and $W(\Sigma, \Lten)$.
Note that $G\subset \Stab(\gen{\Sigma}, \Lten)$,
and hence $\PPP(\Sigma)^g=\PPP(\Sigma)$ holds for all $g\in G$.
First we show that, for any $\Delta\in \VVVS$,
there exists an element $g\in G$ such that $\Delta^g\in \DDD_{\Sigma}$.
An \emph{adjacent sequence} is a sequence 
$D\sbpar{0}, \dots, D\sbpar{N}$
of induced chambers of $\PPP(\Sigma)$ such that $D\sbpar{\nu-1}$ and $D\sbpar{\nu}$ are adjacent
for each $\nu=1, \dots, N$.
The number $N$ is called the \emph{length} of the adjacent sequence
$D\sbpar{0}, \dots, D\sbpar{N}$.
For $\Delta\in \VVVS$, let $d(\Delta)$
be the minimum of the lengths of adjacent sequences from $\PPP(\Sigma)\cap\Delta_0$
to $\PPP(\Sigma)\cap \Delta$.
Since $\PPP(\Sigma)$ is connected and covered by induced chambers,
we have $d(\Delta)<\infty$ for any $\Delta\in \VVV_{\Sigma}$.
Suppose that 
$$
\BBB:=\set{\Delta\in \VVVS}{\textrm{$\Delta^g\notin \DDD_{\Sigma}$ for any $g\in G$}}
$$
is non-empty.
Let $\Delmin\in \BBB$ be an element 
such that $N:=d(\Delmin)\le d(\Delta)$ holds for any $\Delta\in \BBB$.
We put $\Delta\sbpar{0}:=\Delta_0$ and $\Delta\sbpar{N}:=\Delmin$.
Then we have an adjacent sequence
$$
\PPP(\Sigma)\cap\Delta\sbpar{0}, \;\; \PPP(\Sigma)\cap\Delta\sbpar{1}, \;\; \dots, \;\; \PPP(\Sigma)\cap\Delta\sbpar{N-1}, \;\; 
\PPP(\Sigma)\cap\Delta\sbpar{N}
$$
 of  length $N$ from $\PPP(\Sigma)\cap\Delta_0$ to $\PPP(\Sigma)\cap\Delmin$.
The minimality of  $N=d(\Delmin)$ implies that 
there exists an element $g$ of $G$ such that
$\Delta\sbpar{N-1}^g\in \DDD_{\Sigma}$.
We put $\Delta_i:=\Delta\sbpar{N-1}^g$.
Since $\PPP(\Sigma)^g=\PPP(\Sigma)$,
we see that $\PPP(\Sigma) \cap \Delta\sbpar{N}^g$ is an induced chamber adjacent to
$\PPP(\Sigma) \cap \Delta_i$.
Therefore,
when Algorithm~\ref{algo:main} processed $\Delta_i\in \DDD$,
there must be a Vinberg  chamber $\Delta\spar{\kappa}$ 
among $\Delta\spar{1}, \dots, \Delta\spar{k}$ such that
$\PPP(\Sigma) \cap \Delta\spar{\kappa}=\PPP(\Sigma) \cap \Delta\sbpar{N}^g$.
By Lemma~\ref{lem:WSigma},
we have an element $g\sprime\in W(\Sigma, \Lten)\subset G$
such that $\Delta\sbpar{N}^{gg\sprime}=\Delta\spar{\kappa}$.
Since $\Delmin\in \BBB$ implies that $\Delta\spar{\kappa}=\Delmin^{gg\sprime}$ is not in $ \DDD_{\Sigma}$, 
the case (2-2) must have occurred.
Therefore  $\sigma(\Delta\spar{\kappa})=\sigma(\Delmin^{gg\sprime})$ should have  been added to $\sigma(\DDD_{\Sigma})$,
and there exists an element $\Delta_m\in \DDD_{\Sigma}$
such that 
$$
g\spprime :=\gamma(\Delta\spar{\kappa})\inv \cdot \gamma(\Delta_m)\;\;\in\;\;  \GGG_{\Sigma}.
$$
Then $g g\sprime g\spprime\in G$ and $\Delmin^{g g\sprime g\spprime}=\Delta_m\in \DDD_{\Sigma}$,
which is a contradiction. Thus $\BBB=\emptyset$ is proved.
\par
Next we prove $\sigma(\DDD_{\Sigma})=\shortset{\Sigma\sprime\in \Sigmas}{\Sigma\sprime\sim\Sigma}$.
Lemma~\ref{lem:sigmaVVVS} implies  that all $\Sigma\sprime\in \sigma(\DDD_{\Sigma})$ satisfies $\Sigma\sprime\sim\Sigma$.
Suppose that $\Sigma\sprime\in \SSS$ satisfies $\Sigma\sprime\sim\Sigma$,
and let $h\in \OG^+(\Lten)$ be an isometry  such that $\gen{\Sigma}=\gen{\Sigma\sprime}^h$.
We put $\Delta:=\Delta_0^h$.
Since $(\PPP(\Sigma\sprime)\cap \Delta_0)^h=\PPP(\Sigma)\cap\Delta$,
we have  $\Delta\in \VVVS$.
Since $\BBB=\emptyset$,
we have an element $g\in G$ such that $\Delta^g\in \DDD_{\Sigma}$.
Since $\PPP(\Sigma)^g=\PPP(\Sigma)$, we have
$(\PPP(\Sigma\sprime)\cap \Delta_0)^{hg}=\PPP(\Sigma)\cap\Delta^g$,
which implies 
$\Sigma\sprime=\sigma(\Delta^g)$.
Therefore $\Sigma\sprime$ belongs to  $\sigma(\DDD_{\Sigma})$.
\par
We prove $G\supset\Stab(\gen{\Sigma}, \Lten)$.
%It is obvious that $G\subset\Stab(\gen{\Sigma}, \Lten)$.
Let $h$ be an  element of $\Stab(\gen{\Sigma}, \Lten)$.
Then we have $\Delta_0^h\in \VVVS$.
Since $\BBB=\emptyset$,
we have an element $g\in G$ such that $\Delta_0^{hg}\in \DDD_{\Sigma}$.
Since $\PPP(\Sigma)^{hg}=\PPP(\Sigma)$, we have
$(\PPP(\Sigma)\cap \Delta_0)^{hg}=\PPP(\Sigma)\cap\Delta_0^{hg}$,
and therefore $\sigma(\Delta_0^{hg})=\Sigma$.
Since elements of $\sigma(\DDD_{\Sigma})$ are all distinct,
we have $\Delta_0^{hg}=\Delta_0$ and hence $h=g\inv\in G$ holds.
\end{proof}
We execute Algorithm~\ref{algo:main} for all $\Sigma\in \Sigmas$,
and confirm that $\Sigma\sim \Sigma\sprime$ holds
if and only if  their $\ADE$-types $\tau(\Sigma)$ and $\tau(\Sigma\sprime)$ coincide.
Hence we obtain the following:
\begin{theorem}\label{thm:TTT}
The set $\NNN/\OG^+(\Lten)$ is identified with the set 
$\tau(\Sigmas)$ of $\ADE$-types of elements of $\Sigmas$.
In particular, we have $|\,\NNN/\OG^+(\Lten)|=86$.
\qed
\end{theorem}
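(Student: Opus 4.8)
The plan is to identify the orbit set $\NNN/\OG^+(\Lten)$ with a quotient of the finite combinatorial set $\Sigmas$, and then to read off both the identification with $\tau(\Sigmas)$ and the cardinality $86$ from the output of Algorithm~\ref{algo:main}.

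First I would set up the bijection $\NNN/\OG^+(\Lten)\cong \Sigmas/\!\sim$. The corollary following Lemma~\ref{lem:primR} already furnishes a surjection $q\colon \Sigmas\to \NNN/\OG^+(\Lten)$ sending $\Sigma$ to the orbit of $\gen{\Sigma}$, which lies in $\NNN$ by Lemma~\ref{lem:genSigma}. By the very definition of the relation $\sim$, we have $q(\Sigma)=q(\Sigma\sprime)$ precisely when there is $g\in\OG^+(\Lten)$ with $\gen{\Sigma}^g=\gen{\Sigma\sprime}$, that is, precisely when $\Sigma\sim\Sigma\sprime$. Hence $q$ descends to a bijection $\Sigmas/\!\sim\;\isom\;\NNN/\OG^+(\Lten)$, and the whole theorem is reduced to describing the $\sim$-classes on $\Sigmas$.

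Next I would pin those classes down. One inclusion is immediate: if $\Sigma\sim\Sigma\sprime$ then $\gen{\Sigma}$ and $\gen{\Sigma\sprime}$ are isometric, and since $\Sigma$ is itself an $\ADE$-basis of $\gen{\Sigma}$ we get $\tau(\Sigma)=\tau(\gen{\Sigma})=\tau(\gen{\Sigma\sprime})=\tau(\Sigma\sprime)$; thus $\sim$ refines the partition of $\Sigmas$ into the level sets of the type map $\tau$. The substance of the theorem is the reverse inclusion, namely that two subsets of $E_{10}$ of equal $\ADE$-type already lie in a single $\OG^+(\Lten)$-orbit. This is not a formal consequence of the easy direction --- a priori a fixed $\ADE$-type could sit inside $\Lten$ in several $\OG^+(\Lten)$-inequivalent ways --- so it is the crux of the argument.

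To establish the reverse inclusion I would run Algorithm~\ref{algo:main} on every $\Sigma\in\Sigmas$ (there are $|\Sigmas|=1021$ of them). By Proposition~\ref{prop:algomain} its output $\sigma(\DDD_{\Sigma})$ equals the entire $\sim$-class $\shortset{\Sigma\sprime\in\Sigmas}{\Sigma\sprime\sim\Sigma}$, so I can compare this class against the level set $\shortset{\Sigma\sprime\in\Sigmas}{\tau(\Sigma\sprime)=\tau(\Sigma)}$ and check that the two coincide for every $\Sigma$. The main obstacle is exactly this verification: it rests on the generalized Borcherds walk through the induced chambers of the positive cone $\PPP(\Sigma)$ of the hyperbolic lattice $[\Sigma]\sperp$, and on the correctness and termination of Algorithm~\ref{algo:main} (the latter guaranteed by the bound $|\DDD_{\Sigma}|\le 1021$ together with Proposition~\ref{prop:algomain}); the enumeration itself is carried out by machine. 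Once $\sim$ is confirmed to coincide with the fibers of $\tau$, the bijection of the first step yields $\NNN/\OG^+(\Lten)\isom\tau(\Sigmas)$, and a direct enumeration of the distinct $\ADE$-types realized by subsets of the Dynkin diagram of $E_{10}$ gives $|\tau(\Sigmas)|=86$, completing the proof.
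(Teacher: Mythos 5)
Your proposal is correct and follows essentially the same route as the paper: the surjection $\Sigma\mapsto\gen{\Sigma}$ from the corollary to Lemma~\ref{lem:primR}, the identification of its fibers with the $\sim$-classes, and the machine verification via Algorithm~\ref{algo:main} and Proposition~\ref{prop:algomain} that $\sim$ coincides with equality of $\ADE$-types, from which the count $86$ follows. The paper compresses this into one sentence before the theorem statement; your write-up merely makes the descent to a bijection and the easy direction ($\sim$ implies equal type) explicit.
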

Thus we have classified, up to the action of $\OG^+(\Lten)$,
all primitive sublattices of $\Lten$ that appear as $\primR_f$ in Theorem~\ref{thm:L10main}.
In order to complete the proof of  Theorem~\ref{thm:L10main},
we now enumerate all $\ADE$-configurations $\Phi_f$ in each $\primR_f$.
\subsection{Stabilizer subgroups}
Note that $W(\Sigma, \Lten)$ is a normal subgroup of the stabilizer subgroup $\Stab(\gen{\Sigma}, \Lten)$.
We put
$$
\Stab(\Sigma, \Lten):=\set{g\in \OG^+(\Lten)}{\Sigma^g=\Sigma},
$$
which is a subgroup of $\Stab(\gen{\Sigma}, \Lten)$ and is mapped isomorphically to
the quotient group $\Stab(\gen{\Sigma}, \Lten)/W(\Sigma, \Lten)$.
Hence the rows of the following commutative diagram are \emph{splitting} exact sequences:
\begin{equation}\label{eq:commsplits}
\begin{array}{ccccccccc}
1 & \to & W(\Sigma, \Lten) & \to & \Stab(\gen{\Sigma}, \Lten) & \maprightsp{\tilde{\kappa}} &\Stab(\Sigma, \Lten) & \to & 1\phantom{,} \\
  & &\mapdownright{\wr} & &\mapdownright{\res} & &\mapdownright{\res} &  &\\
1 & \to & W(\gen{\Sigma}) & \to & \OG(\gen{\Sigma}) & \maprightsp{\kappa} & \Aut(\Sigma) &\to &1,
\end{array}
\end{equation}
where the vertical arrows are the restriction homomorphisms. 
\begin{corollary}
The group $\Stab(\gen{\Sigma}, \Lten)$ is generated by the union of $W(\Sigma, \Lten) $ and $\tilde{\kappa}(\GGG_{\Sigma})$,
and the group  $\Stab(\Sigma, \Lten) $ is generated by $\tilde{\kappa}(\GGG_{\Sigma})$.
\qed
\end{corollary}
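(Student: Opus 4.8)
The plan is to deduce both assertions from Proposition~\ref{prop:algomain} together with the top (splitting exact) row of the commutative diagram~\eqref{eq:commsplits}. Proposition~\ref{prop:algomain} already provides that the union of $W(\Sigma, \Lten)$ and $\GGG_{\Sigma}$ generates $\Stab(\gen{\Sigma}, \Lten)$; so the entire task is to pass from the generating set $\GGG_{\Sigma}$ to its image $\tilde{\kappa}(\GGG_{\Sigma})$ under the retraction $\tilde{\kappa}\colon \Stab(\gen{\Sigma}, \Lten)\to \Stab(\Sigma, \Lten)$ without disturbing the subgroup generated. No new geometry is needed; the argument is a short computation in the split extension.

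First I would record the decisive property of $\tilde{\kappa}$. Since $\Stab(\Sigma, \Lten)$ is embedded in $\Stab(\gen{\Sigma}, \Lten)$ and is carried isomorphically onto the quotient by $W(\Sigma, \Lten)$, the composite $\Stab(\Sigma, \Lten)\inj \Stab(\gen{\Sigma}, \Lten)\maprightsp{\tilde{\kappa}}\Stab(\Sigma, \Lten)$ is the identity, so $\tilde{\kappa}$ is idempotent. Consequently, for any $g\in \Stab(\gen{\Sigma}, \Lten)$ we have $\tilde{\kappa}(g\cdot \tilde{\kappa}(g)\inv)=1$, whence $g\cdot \tilde{\kappa}(g)\inv$ lies in $\ker\tilde{\kappa}=W(\Sigma, \Lten)$. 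Writing $g=w_g\cdot \tilde{\kappa}(g)$ with $w_g\in W(\Sigma, \Lten)$ and applying this to each $g\in \GGG_{\Sigma}$, the two relations $\tilde{\kappa}(g)=w_g\inv g$ and $g=w_g\,\tilde{\kappa}(g)$ show that the subgroup generated by $W(\Sigma, \Lten)\cup \GGG_{\Sigma}$ and the subgroup generated by $W(\Sigma, \Lten)\cup \tilde{\kappa}(\GGG_{\Sigma})$ coincide. By Proposition~\ref{prop:algomain} this common subgroup is $\Stab(\gen{\Sigma}, \Lten)$, which is the first assertion.

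For the second assertion I would simply apply the surjection $\tilde{\kappa}$ to the generating set of $\Stab(\gen{\Sigma}, \Lten)$. Exactness of the top row of~\eqref{eq:commsplits} gives that $\tilde{\kappa}$ maps $\Stab(\gen{\Sigma}, \Lten)$ onto $\Stab(\Sigma, \Lten)$, so $\Stab(\Sigma, \Lten)$ is generated by $\tilde{\kappa}(W(\Sigma, \Lten))\cup \tilde{\kappa}(\GGG_{\Sigma})$. Since $W(\Sigma, \Lten)=\ker\tilde{\kappa}$, the first of these sets is trivial, leaving $\tilde{\kappa}(\GGG_{\Sigma})$ as a generating set for $\Stab(\Sigma, \Lten)$.

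I do not anticipate any genuine obstacle: all the substance is already contained in Proposition~\ref{prop:algomain} and in the splitting of the exact sequence, and the remainder is a one-line diagram chase. The only point deserving a moment of care is the left/right bookkeeping of cosets, but since $W(\Sigma, \Lten)$ is normal in $\Stab(\gen{\Sigma}, \Lten)$ the argument is insensitive to the convention and the same decomposition $g=w_g\,\tilde{\kappa}(g)$ is available on either side.
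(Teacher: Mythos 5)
Your proof is correct and takes essentially the same route the paper intends: the paper states this corollary without further argument as an immediate consequence of Proposition~\ref{prop:algomain} and the splitting of the top row of~\eqref{eq:commsplits}, and your computation with the retraction $\tilde{\kappa}$ (rewriting each $g\in\GGG_{\Sigma}$ as $w_g\,\tilde{\kappa}(g)$ with $w_g\in W(\Sigma,\Lten)$, then pushing the generating set forward along the surjection $\tilde{\kappa}$) just makes that omitted routine verification explicit.
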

We can calculate $\tilde{\kappa}(\GGG_{\Sigma})$ by the same method as Algorithm~\ref{algo:phi}.
We can also calculate a  generating set $\kappa(\res(\GGG_{\Sigma}))=\res(\tilde{\kappa}(\GGG_{\Sigma}))$
of the subgroup
\begin{equation}\label{eq:HSigma}
H_\Sigma:=\Image(\;\; \Stab(\gen{\Sigma}, \Lten)\maprightsp{\res}  \OG(\gen{\Sigma})\maprightsp{\kappa} \Aut(\Sigma)\;\; )
\end{equation}
of $\Aut(\Sigma)$.
%
%
%
%\begin{remark}
%In~\cite{EnrRcompdata},
%we present $\tilde{\kappa}(\GGG_{\Sigma})$ and  $\kappa(\res(\GGG_{\Sigma}))$ explicitly
%for each $\Sigma\in \SSS$.
%Moreover,
%for each $t\in \tau(\SSS)$,
%we choose a representative $\Sigma_{0, t}$ of the set 
%$\shortset{\Sigma\sprime\in \SSS}{\tau(\Sigma\sprime)=t}$
%and an isometry $g_{\Sigma}\in \OG^+(\Lten)$ that maps $\gen{\Sigma}$ to $\gen{\Sigma_{0, \tau(\Sigma)}}$ isomorphically.
%\end{remark}
%
\subsection{Embeddings of an $\ADE$-configuration}\label{subsec:embLten}
Let $\Phi$ be an $\ADE$-configuration with $|\Phi|<10$.
%An embedding $f\colon \Phi\inj \Lten$ is written as  $r\mapsto r^f$.
Let $\Emb(\Phi)$ denote the set of all embeddings of $\Phi$ into $\Lten$.
Then $\Aut(\Phi)$ acts on $\Emb(\Phi)$ from the left,
and $\OG^+(\Lten)$ acts on $\Emb(\Phi)$ from the right.
In this section, we calculate the set $\Aut(\Phi)\backslash \Emb(\Phi)/ \OG^+(\Lten)$,
and prove Theorem~\ref{thm:L10main}.
\par
%Let $\gen{\Phi}$ denote the free $\Z$-module with a basis $\Phi$.
%By extending $\intfvoid$ on $\Phi$ linearly, we regard $\gen{\Phi}$ 
%as a negative-definite root lattice.
Let $f\colon \Phi\inj\Lten$ be an embedding.
We denote by $\Phi_{f}$ the image of $f$, 
by $R_f$ the sublattice of $\Lten$ generated by  $\Phi_{f}$,
and by  $\primR_f$ the primitive closure of $R_f$ in $\Lten$.
Then  $\primR_f$  corresponds to  an even overlattice $R_{(f)}$ of $\gen{\Phi}$
via the isometry $\gen{\Phi} \isom R_f$ given by $f$,
and  we have $\primR_f\in \NNN$  by Lemma~\ref{lem:primR}.
Recall that $\tau(\Sigmas)$ is the set of $\ADE$-types of all $\Sigma\in \Sigmas$.
We consider the following condition on an even overlattice $\primR$
of $\gen{\Phi}$.
$$
\textrm{($\sharp$)\;\; $\primR$ is a root lattice whose  $\ADE$-type belongs to $\tau(\Sigmas)$.}
$$
Then Theorem~\ref{thm:TTT} implies the following:
\begin{enumerate}[(a)]
\item 
For any $f\in \Emb(\Phi)$,
the even overlattice $R_{(f)}$ of $\gen{\Phi}$ satisfies  ($\sharp$),
and there exist an element $g\in \OG^+(\Lten)$ and an element $\Sigma\in \SSS$
such that $\primR_{fg}=\gen{\Sigma}$.
\item 
Suppose that an even overlattice $\primR$  of $\gen{\Phi}$ satisfies  ($\sharp$).
Then there exist an element $\Sigma\in \SSS$ and an isometry $\bar{f}\colon \primR\isom \gen{\Sigma}$.
The restriction of $\bar{f}$ to $\Phi\subset \primR$ gives an embedding  $f\in \Emb(\Phi)$.
\end{enumerate}
Therefore we can calculate $\Aut(\Phi)\backslash \Emb(\Phi)/ \OG^+(\Lten)$
by the following method.
\par
\smallskip
(1) 
Let $\OL(\Phi)$ denote the set of even overlattices of $\gen{\Phi}$,
on which $\Aut(\Phi)$ acts from the right.
We calculate the set $\OL(\Phi)/\Aut(\Phi)$ of orbits of this action,
and for each orbit $o\in \OL(\Phi)/\Aut(\Phi)$,
we choose a representative $\primR\in o$
and calculate the stabilizer subgroup $\Stab(\primR, \Phi)$ of $\primR$ in 
the finite group $\Aut(\Phi)$.
\par
\smallskip
(2)
For $\primR\in \OL(\Phi)$,
let $[\primR]\in \OL(\Phi)/\Aut(\Phi)$ denote 
the orbit containing $\primR$.
Similarly,
for $\Sigma\in \Sigmas$,
let $[\Sigma]\in \NNN/\OG^+(\Lten)$ denote 
the orbit containing $\gen{\Sigma}\in \NNN$.
We define a subset $\III_{\Phi}$ of $ \OL(\Phi)/\Aut(\Phi)\times \NNN/\OG^+(\Lten)$ by 
$$
\III_{\Phi}:=\set{([\primR], [\Sigma])}{%
\textrm{$\primR$ satisfies ($\sharp$) and $\tau(\primR)=\tau(\Sigma)$}}.
$$
For each pair $([\primR], [\Sigma])\in \III_{\Phi}$,
we define  a set  $\baremb([\primR], [\Sigma])$ as follows.
Let $\Isom(\primR,\gen{\Sigma})$ denote 
the set of all isomorphisms from $\primR$ to $\gen{\Sigma}$,
on which $\Stab(\primR, \Phi)$ acts from the left and $\Stab(\gen{\Sigma}, \Lten)$ acts from the right.
We put 
$$
\baremb([\primR], [\Sigma]):=\Stab(\primR, \Phi)\backslash \Isom(\primR, \gen{\Sigma})/\Stab(\gen{\Sigma}, \Lten).
$$
Then the set  $\Aut(\Phi)\backslash \Emb(\Phi)/ \OG^+(\Lten)$ is the disjoint union of
$\baremb([\primR], [\Sigma])$,
where  $([\primR], [\Sigma])$ runs through the set $\III_{\Phi}$.
%
%\subsection{Even overlattices of $\gen{\Phi}$}\label{subsec:overlatticesPhi}
%
\subsubsection{Execution of task {\rm (1)}}
The task (1) above can be easily carried out by Proposition~\ref{prop:overlattices}.
We calculate the set of all  totally isotropic subgroups of the discriminant form $\discf{\gen{\Phi}}$ up to the action of $\Aut(\Phi)$.
Executing this calculation for all $157$ $\ADE$-configurations $\Phi$ with $|\Phi|<10$, 
we obtain the following:
\begin{theorem}\label{thm:OL}
Let $\Phi$ be an $\ADE$-configuration with $|\Phi|<10$.
Suppose that $\primR_1$ and $\primR_2$ are even overlattices of $\gen{\Phi}$.
If $\primR_1$ and $\primR_2$ are root lattices
with the same $\ADE$-type,
then there exists an automorphism $g\in \Aut(\Phi)$ such that $\primR_1^g=\primR_2$.
\qed
\end{theorem}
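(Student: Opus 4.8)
The plan is to reduce the statement to a finite computation with discriminant forms and then to verify it configuration by configuration. By Proposition~\ref{prop:overlattices}, the even overlattices of $\gen{\Phi}$ are in bijection with the totally isotropic subgroups of the discriminant form $\discf{\gen{\Phi}}$, and under this bijection the right action of $\Aut(\Phi)$ on overlattices corresponds to its action on isotropic subgroups through the natural homomorphism $\Aut(\Phi)\to \OG(\discf{\gen{\Phi}})$. Thus the assertion becomes the following: among the totally isotropic subgroups $H\subset \discf{\gen{\Phi}}$ whose associated overlattice is a root lattice, the $\ADE$-type of that overlattice determines the $\Aut(\Phi)$-orbit of $H$.

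A convenient first simplification is to note that it does not matter whether one uses $\Aut(\Phi)$ or the full orthogonal group $\OG(\gen{\Phi})$ on the discriminant side. Indeed, $\gen{\Phi}$ is a negative-definite root lattice with $\ADE$-basis $\Phi$, so by the splitting sequence~\eqref{eq:WRORkappa} we have $\OG(\gen{\Phi})=W(\gen{\Phi})\rtimes \Aut(\Phi)$; since each reflection $s_r$ with $r\in \gen{\Phi}$ satisfies $s_r(x)\equiv x \bmod \gen{\Phi}$ for every $x\in \gen{\Phi}\dual$, the Weyl group $W(\gen{\Phi})$ lies in the kernel of $\OG(\gen{\Phi})\to \OG(\discf{\gen{\Phi}})$. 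Hence the images of $\Aut(\Phi)$ and of $\OG(\gen{\Phi})$ in $\OG(\discf{\gen{\Phi}})$ coincide, and the two orbit decompositions of the isotropic subgroups agree, making the classification intrinsic to the lattice $\gen{\Phi}$.

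With these reductions in place, the proof is carried out by the explicit enumeration of task~(1): for each of the $157$ $\ADE$-configurations $\Phi$ with $|\Phi|<10$ I would list the totally isotropic subgroups of $\discf{\gen{\Phi}}$, form the associated overlattices as their preimages in $\gen{\Phi}\dual$, decide which of them are root lattices and compute the $\ADE$-type of each (using the algorithms of Section~\ref{subsec:negdefroot}, for instance by extracting an $\ADE$-basis from $\Roots(M)$ via the indecomposable-root construction of Definition~\ref{def:indecomp}), group the subgroups into $\Aut(\Phi)$-orbits, and finally check that two distinct orbits never produce root-lattice overlattices of the same $\ADE$-type.

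The main obstacle is conceptual rather than algorithmic: there is no structural reason forcing the $\ADE$-type to be a complete invariant. In the double-coset language $\OG(\gen{\Phi})\backslash\{\text{finite-index embeddings } \gen{\Phi}\hookrightarrow M\}/\OG(M)$, the statement amounts to the assertion that, for each root lattice $M=R(\bar t)$, there is essentially a unique way to realize $\gen{\Phi}$ as a finite-index sublattice of $M$; for sub-root-systems of a given type inside a root system this uniqueness can fail in general, so the content of the theorem is precisely the empirical fact that no such coincidence occurs for any of the $157$ configurations. The only delicate algorithmic points — reliably detecting when an overlattice is a root lattice together with its type, and computing the $\Aut(\Phi)$-orbits of isotropic subgroups — are already furnished by the machinery of Section~\ref{sec:preliminaries}.
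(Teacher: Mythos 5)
Your proposal is correct and takes essentially the same route as the paper: the paper likewise uses Proposition~\ref{prop:overlattices} to identify even overlattices of $\gen{\Phi}$ with totally isotropic subgroups of $\discf{\gen{\Phi}}$, enumerates those subgroups up to the action of $\Aut(\Phi)$ for all $157$ configurations with $|\Phi|<10$, and reads off the theorem from that finite computation. Your extra observation that $W(\gen{\Phi})$ acts trivially on $\discg{\gen{\Phi}}$, so the images of $\Aut(\Phi)$ and $\OG(\gen{\Phi})$ in $\OG(\discf{\gen{\Phi}})$ coincide, is correct but not required for the paper's argument.
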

\begin{example}
Suppose that $\Phi=\{r_1, \dots, r_n\}$ is of $\ADE$-type $n A_1$.
Then the discriminant group $\discg{\gen{\Phi}}$ of $\gen{\Phi}$  is an $n$-dimensional $\F_2$-vector space with basis
$ r\dual_i:=-r_i/2 \bmod \gen{\Phi}$ ($i=1, \dots, n$),
and   $\discf{\gen{\Phi}}$ is given by
$$
\discf{\gen{\Phi}}(a_1 r\dual_1+\dots+a_n r\dual_n)=-(a_1^2+\cdots +a_n^2)/2 \bmod 2\Z,
\quad\textrm{where $a_1, \dots, a_n\in \F_2$}.
$$
Hence a subspace $\CCC$ of $\discg{\gen{\Phi}}=\F_2^n$ is  totally isotropic with respect to $\discf{\gen{\Phi}}$
if and only if $\CCC$ is a doubly-even linear code in $\F_2^n$.
(See Ebeling~\cite{EbelingBook} for the terminologies on codes.)
We classify these codes up to the action of $\Aut(\Phi)\cong \SSSS_n$
by a brute-force method.
In the case $n=8$,
there exist exactly $8$ doubly-even linear codes in $\F_2^8$ up to $\SSSS_8$.
The corresponding even overlattices  of $\gen{\Phi}$ are given in Table~\ref{table:8A1},
where $[i_1 \dots i_k]$ denotes the codeword $(r\dual_{i_1}+\cdots +r\dual_{i_k}) \bmod \gen{\Phi}$ in $\F_2^8$.
\begin{table}
$$
\renewcommand{\arraystretch}{1.4}
\begin{array}{cccc}
\dim \CCC & \CCC & \tau(\primR) &\textrm{Condition ($\sharp$)}\\
\hline
 0 & 0 &  8A_1 &\textrm{no}\\
 \hline
 1 & \gen{[5678]} & 4A_1+ D_4 &\textrm{no}\\
 \hline
 1 & \gen{[12\dots 8]} & \textrm{not a root lattice} &\textrm{no} \\
 \hline
 2 & \gen{[3678], \;\;[4578]} & 2 A_1+ D_6 &\textrm{no}\\
 \hline
 2 & \gen{[1678], \;\;[2345]} & 2 D_4 &\textrm{no}\\
   \hline
 3 & \gen{[2678], \;\;[3578], \;\; [4568]}&  A_1+E_7 & \textrm{yes}  \\
  \hline
3 &\gen{[1678], \;\; [2578],\;\; [3478]}&  D_8 &\textrm{yes}  \\
 \hline
 4 & \gen{[1678], \;\; [2578],\;\; [3568], \;\; [4567]}&  E_8 &\textrm{yes}  
\end{array}
%
%\erase{
%\begin{array}{cccc}
%\dim \CCC & \CCC & \tau(\primR) &\textrm{Condition ($\sharp$)}\\
%\hline
% 0 & 0 &  8A_1 &\textrm{no}\\
% \hline
% 1 & \gen{r\dual_5+r\dual_6+r\dual_7+r\dual_8} & 4A_1+ D_4 &\textrm{no}\\
% \hline
% 1 & \gen{r\dual_1+r\dual_2+\cdots +r\dual_7+r\dual_8} & \textrm{not a root lattice} &\textrm{no} \\
% \hline
% 2 & \gen{r\dual_3+r\dual_6+r\dual_7+r\dual_8, \;\;r\dual_4+r\dual_5+r\dual_7+r\dual_8} & 2 A_1+ D_6 &\textrm{no}\\
% \hline
% 2 & \gen{r\dual_1+r\dual_6+r\dual_7+r\dual_8, \;\;r\dual_2+r\dual_3+r\dual_4+r\dual_5} & 2 D_4 &\textrm{no}\\
%   \hline
% 3 &
%\left\langle
% \parbox{5.8cm}{$r\dual_2+r\dual_6+r\dual_7+r\dual_8, \;\;r\dual_3+r\dual_5+r\dual_7+r\dual_8,\\ r\dual_4+r\dual_5+r\dual_6+r\dual_8$}
% \right\rangle&  A_1+E_7 & \textrm{yes}  \mystruthd{16pt}{14pt} \\
%  \hline
%3 &
%\left\langle
% \parbox{5.8cm}{$r\dual_1+r\dual_6+r\dual_7+r\dual_8, \;\;r\dual_2+r\dual_5+r\dual_7+r\dual_8,\\ r\dual_3+r\dual_4+r\dual_7+r\dual_8$}
% \right\rangle&  D_8 &\textrm{yes}  \mystruthd{16pt}{14pt} \\
% \hline
% 4 &
%\left\langle
% \parbox{5.8cm}{$r\dual_1+r\dual_6+r\dual_7+r\dual_8, \;\;r\dual_2+r\dual_5+r\dual_7+r\dual_8,\\ r\dual_3+r\dual_5+r\dual_6+r\dual_8,\;\; r\dual_4+r\dual_5+r\dual_6+r\dual_7$}
% \right\rangle&  E_8 &\textrm{yes}  \mystruthd{16pt}{14pt} 
%\end{array}
%}
$$
\vskip 1mm
%
%8 1 [ [ 0, 1 ], [ 4, 1 ] ] 
%8 1 [ [ 0, 1 ], [ 8, 1 ] ] 
%8 2 [ [ 0, 1 ], [ 4, 2 ], [ 8, 1 ] ] 
%8 2 [ [ 0, 1 ], [ 4, 3 ] ] 
%8 3 [ [ 0, 1 ], [ 4, 6 ], [ 8, 1 ] ] 
%8 3 [ [ 0, 1 ], [ 4, 7 ] ] 
%8 4 [ [ 0, 1 ], [ 4, 14 ], [ 8, 1 ] ] 
%Phi [ "A1", "A1", "A1", "A1", "A1", "A1", "A1", "A1" ] 
%______ [  ] [ "A1", "A1", "A1", "A1", "A1", "A1", "A1", "A1" ] false true 
%______ [ [ 0, 0, 0, 0, 1, 1, 1, 1 ] ] [ "A1", "A1", "A1", "A1", "D4" ] false true 
%______ [ [ 1, 1, 1, 1, 1, 1, 1, 1 ] ] not root lat false false 
%______ [ [ 0, 0, 1, 0, 0, 1, 1, 1 ], [ 0, 0, 0, 1, 1, 0, 1, 1 ] ] [ "A1", "A1", "D6" ] false true 
%______ [ [ 1, 0, 0, 0, 0, 1, 1, 1 ], [ 0, 1, 1, 1, 1, 0, 0, 0 ] ] [ "D4", "D4" ] false true 
%______ [ [ 0, 1, 0, 0, 0, 1, 1, 1 ], [ 0, 0, 1, 0, 1, 0, 1, 1 ], [ 0, 0, 0, 1, 1, 1, 0, 1 ] ] [ "A1", "E7" ] true true 
%______ [ [ 1, 0, 0, 0, 0, 1, 1, 1 ], [ 0, 1, 0, 0, 1, 0, 1, 1 ], [ 0, 0, 1, 1, 0, 0, 1, 1 ] ] [ "D8" ] true true 
%______ [ [ 1, 0, 0, 0, 0, 1, 1, 1 ], [ 0, 1, 0, 0, 1, 0, 1, 1 ], [ 0, 0, 1, 0, 1, 1, 0, 1 ], [ 0, 0, 0, 1, 1, 1, 1, 0 ] ] [ "E8" ] true true 
\caption{Even overlattices of the root lattice of type $8A_1$}\label{table:8A1}
\end{table}
\end{example}
%
%
%\subsection{Calculation of $\baremb([\primR], [\Sigma])$}
%
\subsubsection{Execution of task {\rm (2)}}
%
%Let $([\primR], [\Sigma])$ be an element of $\III_{\Phi}$.
Suppose that $([\primR], [\Sigma])$ is an element of  $ \III_{\Phi}$.
We describe a method  to calculate the set $\baremb([\primR], [\Sigma])$.
We can find an $\ADE$-basis of the even overlattice $\primR$ 
by the method described in Section~\ref{subsec:negdefroot},
and hence 
we can write an element $\varphi_0\in \Isom(\primR, \gen{\Sigma})$ explicitly.
Using $\varphi_0$ as a reference point, we identify
 $\Isom(\primR, \gen{\Sigma})$  with $\OG(\gen{\Sigma})$.
 Moreover, since we have a natural injective homomorphism $\Stab(\primR, \Phi)\inj \OG (\primR)$,
we can  regard  $\Stab(\primR, \Phi)$ as a subgroup of $\OG(\gen{\Sigma})$.
\begin{remark}\label{rem:varphi0}
By Algorithm~\ref{algo:phi},
we can choose $\varphi_0\in \Isom(\primR, \gen{\Sigma})$
in such a way that the image by $\varphi_0$ of the connected component
$$
\set{x\in \primR\tensor\R}{\intf{x, r}>0\;\;\textrm{for all}\;\; r \in \Phi}
$$
of $(\primR\tensor\R)\spcirc=(\gen{\Phi}\tensor\R)\spcirc$ contains the connected component
$$
\set{y\in \gen{\Sigma}\tensor\R}{\intf{y, r}>0\;\;\textrm{for all}\;\; r \in \Sigma}
$$
of $(\gen{\Sigma}\tensor\R)\spcirc$. See Section~\ref{subsec:negdefroot}.
\end{remark}
Let $H_{\Phi}$ denote the image of $\Stab(\primR, \Phi)\subset \OG(\gen{\Sigma})$
by the quotient  homomorphism $\OG(\gen{\Sigma})\to \Aut(\Sigma)$
by $W(\gen{\Sigma})$,
which can be calculated by Algorithm~\ref{algo:phi}.
Since $\Stab(\gen{\Sigma}, \Lten)$ contains $W(\Sigma, \Lten)$,
we see that
$$
\baremb([\primR], [\Sigma]):=\Stab(\primR, \Phi) \backslash \OG(\gen{\Sigma})\,/\,\Stab(\gen{\Sigma}, \Lten)
 =H_{\Phi} \backslash \Aut(\Sigma) \,/\, H_{\Sigma}, 
$$
where $H_{\Sigma}$ is defined by~\eqref{eq:HSigma}.
By this computation, we obtain the following:
\begin{theorem}\label{thm:emb}
The set\,  $\baremb([\primR], [\Sigma])$
consists of a single element 
for any $\ADE$-configuration $\Phi$ with $|\Phi|<10$
and any pair $([\primR], [\Sigma])\in \III_{\Phi}$.
\qed
\end{theorem}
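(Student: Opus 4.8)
The plan is to lean on the reduction established immediately before the statement, namely the identification
$$
\baremb([\primR], [\Sigma])=H_{\Phi}\backslash \Aut(\Sigma)/H_{\Sigma},
$$
which replaces the a priori infinite double coset $\Stab(\primR, \Phi)\backslash \Isom(\primR, \gen{\Sigma})/\Stab(\gen{\Sigma}, \Lten)$ by a double coset inside the \emph{finite} group $\Aut(\Sigma)$. Proving that this set is a single point is then equivalent to the group-theoretic identity $H_{\Phi}\cdot H_{\Sigma}=\Aut(\Sigma)$, so the whole problem reduces to a finite computation, provided explicit generators of the two subgroups $H_{\Phi}, H_{\Sigma}\subset\Aut(\Sigma)$ are available.

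First I would assemble $H_{\Sigma}$, which carries the genuinely lattice-theoretic content since $\Stab(\gen{\Sigma}, \Lten)$ lives inside the infinite group $\OG^+(\Lten)$. Here Algorithm~\ref{algo:main} already supplies a finite generating set $\GGG_{\Sigma}$ of $\Stab(\gen{\Sigma}, \Lten)$ modulo $W(\Sigma, \Lten)$ (Proposition~\ref{prop:algomain}), and the splitting exact sequences of the commutative diagram~\eqref{eq:commsplits} let me transport these generators through the restriction to $\OG(\gen{\Sigma})$ and then through $\kappa$ into $\Aut(\Sigma)$; the resulting set $\kappa(\res(\GGG_{\Sigma}))$ generates $H_{\Sigma}$ as in~\eqref{eq:HSigma}, with each projection computed by Algorithm~\ref{algo:phi}.

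Next I would assemble $H_{\Phi}$. Its source is the finite stabilizer $\Stab(\primR, \Phi)\subset\Aut(\Phi)$ already determined in task (1), which I view inside $\OG(\gen{\Sigma})$ via a reference isometry $\varphi_0\colon\primR\isom\gen{\Sigma}$; taking $\varphi_0$ compatible with the positive chambers, as in Remark~\ref{rem:varphi0}, keeps the fundamental-domain bookkeeping consistent. To read off the image $H_{\Phi}\subset\Aut(\Sigma)$ I first produce an $\ADE$-basis of $\primR$ by the construction of Section~\ref{subsec:negdefroot} and then apply Algorithm~\ref{algo:phi} once more.

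With both subgroups in hand, computing the double coset space $H_{\Phi}\backslash\Aut(\Sigma)/H_{\Sigma}$ is a routine finite-group calculation, which I would run in GAP for each of the $157$ configurations $\Phi$ with $|\Phi|<10$ and every pair $([\primR], [\Sigma])\in\III_{\Phi}$, checking that a single orbit appears in every case. The hard part is not this final step but the fidelity of the data feeding $H_{\Sigma}$: everything rests on Algorithm~\ref{algo:main} having correctly enumerated $\shortset{\Sigma\sprime\in\Sigmas}{\Sigma\sprime\sim\Sigma}$ and returned a true generating set of the stabilizer, i.e.\ on the infinite group $\OG^+(\Lten)$ having been faithfully compressed into finite Borcherds--Vinberg data. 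Granting this, Theorem~\ref{thm:emb}, combined with Theorem~\ref{thm:OL} and the disjoint-union decomposition preceding task (1), identifies $\Aut(\Phi)\backslash\Emb(\Phi)/\OG^+(\Lten)$ with $\III_{\Phi}$ and hence yields Theorem~\ref{thm:L10main}(1); part (2) then follows from the description of $\tau(\Sigmas)$ in Theorem~\ref{thm:TTT}.
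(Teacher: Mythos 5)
Your proposal reproduces the paper's own proof: the identification $\baremb([\primR], [\Sigma])=H_{\Phi}\backslash \Aut(\Sigma)/H_{\Sigma}$, with $H_{\Sigma}$ generated by $\kappa(\res(\GGG_{\Sigma}))$ coming from Algorithm~\ref{algo:main} and $H_{\Phi}$ obtained from $\Stab(\primR,\Phi)$ via the reference isometry $\varphi_0$ and Algorithm~\ref{algo:phi}, followed by a finite (GAP) check over all $157$ configurations and all pairs in $\III_{\Phi}$, is exactly how the paper establishes Theorem~\ref{thm:emb}. Your reformulation of the conclusion as the identity $H_{\Phi}\cdot H_{\Sigma}=\Aut(\Sigma)$ is a correct equivalent of the single-double-coset condition, so there is no gap.
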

Now we can prove Theorem~\ref{thm:L10main}.
\begin{proof}[Proof of Theorem~\ref{thm:L10main}]
The assertion (1)  follows from  Lemma~\ref{lem:primR}.
The assertion (2)  follows from
Theorems~\ref{thm:TTT},~\ref{thm:OL}~and~\ref{thm:emb}.
%There exist exactly $157$ $\ADE$-configurations $\Phi$ with $|\Phi|<10$.
%By  Theorem~\ref{thm:TTT},
%we have $| \NNN/\OG^+(\Lten)|=86$.
%The sum of $|\III_{\Phi}|$
%over the $157$ $\ADE$-configurations $\Phi$ is equal to $184$.
\end{proof} 
\subsection{The stabilizer subgroup of $\Phi_f$ in $\OG^+(\Lten)$}\label{subsec:stabPhiL}
Suppose that $f\in \Emb(\Phi)$ and $\Sigma\in \Sigmas$
satisfy $\primR_f=\gen{\Sigma}$,
and that $f$ induces an element $\varphi_0\in \Isom(\primR, \gen{\Sigma})$ satisfying 
the condition in Remark~\ref{rem:varphi0}.
We put
$$
\Stab(\Phi_f, \Lten):=\set{g\in \OG^+(\Lten)}{\Phi_f^g=\Phi_f}.
$$ 
It is obvious that $\Stab(\Phi_f, \Lten)\subset \Stab(\gen{\Sigma}, \Lten)$.
The purpose of this section is 
to calculate a finite generating set  $\GGG\sprime_{\Phi}$  of $\Stab(\Phi_f, \Lten)$.
This set $\GGG\sprime_{\Phi}$ will be used in the next section
for the classifications of strongly equivalence classes of $\RDP$-Enriques surfaces.
\par
The following general algorithm is used several times in this section.
\begin{algorithm}\label{algo:generalStab}
Let $G$ be a group generated by $\gamma_1, \dots, \gamma_N \in G$.
Suppose that $G$ acts on a finite set $S$,
and let $s_0$ be an element of $S$.
This algorithm calculates 
a set $\RRR_0$ of elements of $G$ such that $g\mapsto s_0^g$
gives a bijection from $\RRR_0$ 
to the orbit $s_0^G:=\shortset{s_0^g}{g\in G}$ of $s_0$ under the action of $G$.
This algorithm also calculates 
a finite generating set $\GGG_0$ of the stabilizer subgroup
$$
\Stab(s_0, G):=\set{g\in G}{s_0^g=s_0}.
$$
We set  
$\Gamma:=\{\gamma_1, \dots, \gamma_N, \gamma_1\inv, \dots, \gamma_N\inv\}$.
We then put $h_0:=\id\in G$, and
$$
\RRR:=[h_0], 
\quad 
\OOO:=[s_0],
\quad
\GGG:=\{\;\;\},
\quad
i:=0.
$$
During the calculation,
we have the following:
\begin{enumerate}[(i)]
\item $\RRR$ is a list $[h_0, h_1, \dots, h_j]$ of elements of $G$, and  $\OOO$ is the list $[s_0, s_1, \dots, s_j]$ of 
\emph{distinct} elements of $s_0^G$
such that $s_{\mu}=s_0^{h_{\mu}}$ holds for $\mu=0, \dots, j$, and
\item $\GGG$ is a set of elements of $\Stab(s_0, G)$.
\end{enumerate}
While $i+1\le j+1=|\RRR|=|\OOO|$, we execute the following calculation.
\begin{enumerate}[(1)]
\item Let $h_i$ be the $(i+1)$st element of $\RRR$,
and let $s_i=s_0^{h_i}$ be the $(i+1)$st element of $\OOO$.
For each $\gamma\in \Gamma$,
we execute the following:
\begin{itemize}
\item[(1-1)]
If $s_i^{\gamma}=s_0^{h_i\gamma}\notin\OOO$, then we add $h_i {\gamma}$ to $\RRR$ and $s_i^{\gamma}$ to $\OOO$
at the end of each  list, whereas
\item[(1-2)]
if  $s_i^{\gamma}=s_0^{h_i\gamma}$ is equal to the $(m+1)$st element $s_m=s_0^{h_m}$ of $\OOO$,
then we add $h_i\gamma h_m\inv \in \Stab(s_0, G)$ to $\GGG$.
\end{itemize}
\item We increment $i$ to $i+1$.
\end{enumerate}
Since $|\OOO|=|\RRR|$ cannot exceed $|S|$,
this algorithm terminates.
When it terminates, 
it outputs  $\RRR$ as $\RRR_0$, $\OOO$ as $\OOO_0$, and $\GGG$ as  $\GGG_0$.
\end{algorithm}
%
%The correctness of this algorithm follows from the following:
%
\begin{proposition}\label{prop:algostabPhiL}
The set $\OOO_0$ is equal to $s_0^G$, and $\GGG_0$ generates $\Stab(s_0, G)$.
\end{proposition}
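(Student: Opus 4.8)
The plan is to recognize Algorithm~\ref{algo:generalStab} as the standard orbit-and-stabilizer computation relative to the symmetric generating set $\Gamma=\{\gamma_1,\dots,\gamma_N,\gamma_1\inv,\dots,\gamma_N\inv\}$, and to split the proof into the two assertions, treating the orbit first and then deriving the stabilizer statement from (a direct re-proof of) Schreier's lemma.

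First I would establish $\OOO_0=s_0^G$. The inclusion $\OOO_0\subseteq s_0^G$ is immediate: every element placed in $\OOO$ has the form $s_0^{h}$ for the corresponding $h\in\RRR$, since this holds for the initial pair $(s_0,h_0)=(s_0,\id)$ and is preserved in step~(1-1), where $s_i^{\gamma}=s_0^{h_i\gamma}$ is recorded together with $h_i\gamma\in G$. For the reverse inclusion, the key observation is that at termination $\OOO_0$ is closed under the action of $\Gamma$: every $s_m\in\OOO_0$ is eventually processed (the while-loop runs until $i$ exhausts the list), and for each $\gamma\in\Gamma$ the element $s_m^{\gamma}$ is either already in $\OOO$ or is adjoined in~(1-1); in all cases $s_m^{\gamma}\in\OOO_0$. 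Since $\Gamma$ generates $G$, any $s=s_0^{g}$ with $g=\delta_1\cdots\delta_k$ ($\delta_i\in\Gamma$) is reached from $s_0\in\OOO_0$ by induction on $k$, applying closure under $\Gamma$ at each step. Hence $s_0^G\subseteq\OOO_0$, and since the entries of $\OOO_0$ are kept distinct, $h\mapsto s_0^{h}$ is a bijection from $\RRR_0$ onto $s_0^G$.

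Next I would record the structural consequence. Writing $H:=\Stab(s_0,G)$, the relation $s_0^{h}=s_0^{h\sprime}\Leftrightarrow Hh=Hh\sprime$ together with the bijection above shows that $\RRR_0$ is a right transversal of $H$ in $G$, with $h_0=\id$ representing the trivial coset $H$. The stabilizer assertion is then exactly Schreier's lemma for the transversal $\RRR_0$ and the symmetric generating set $\Gamma$. That every element of $\GGG_0$ lies in $H$ is already noted in the algorithm, so it remains to prove generation. For this I would use the telescoping (coset-rewriting) argument: given $h=\delta_1\cdots\delta_k\in H$, set $t_0:=\id$ and $t_j:=\overline{t_{j-1}\delta_j}$, the representative in $\RRR_0$ of $H t_{j-1}\delta_j$; then $c_j:=t_{j-1}\delta_j t_j\inv\in H$ and $c_1c_2\cdots c_k=h\,t_k\inv$, so $h=c_1\cdots c_k\,t_k$. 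Since $h\in H$ one has $t_k=\overline{h}=\id$, whence $h=c_1\cdots c_k$. Finally each $c_j$ is precisely the element the algorithm either discards (when $t_j=t_{j-1}\delta_j$, giving $c_j=\id$) or adjoins in step~(1-2) as $h_i\gamma h_m\inv$ with $h_i=t_{j-1}$, $\gamma=\delta_j$, $h_m=t_j$; and every pair $(t_{j-1},\delta_j)\in\RRR_0\times\Gamma$ is indeed processed. Thus $h\in\langle\GGG_0\rangle$, giving $\langle\GGG_0\rangle\supseteq H$, and combined with $\GGG_0\subseteq H$ this yields $\langle\GGG_0\rangle=H$.

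I expect the main obstacle to be the bookkeeping in the telescoping step: matching the abstract Schreier generators $t_{j-1}\delta_j\,\overline{t_{j-1}\delta_j}\vphantom{t}\inv$ with the concrete elements $h_i\gamma h_m\inv$ produced in~(1-2), and checking that the "new element" case~(1-1) corresponds exactly to a trivial Schreier generator. The orbit closure argument and the identification of $\RRR_0$ as a transversal are routine by comparison. If one prefers, the entire stabilizer part can be cited directly from Schreier's lemma, in which case the only remaining task is to confirm that $\GGG_0$ is precisely the set of nontrivial Schreier generators for $(\RRR_0,\Gamma)$.
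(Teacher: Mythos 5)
Your proof is correct and complete, and it follows the approach the paper intends: the paper's own proof is left to the reader with the remark that it is ``similar to and simpler than'' the proof of Proposition~\ref{prop:algomain}, which is exactly the orbit--stabilizer/Schreier argument you give. Your induction on word length plays the role of the minimal-counterexample-on-distance step in the proof of Proposition~\ref{prop:algomain}, and your telescoping identification of the elements $h_i\gamma h_m^{-1}$ from step (1-2) as the nontrivial Schreier generators for the transversal $\RRR_0$ is the coset-rewriting step of that same argument, so you have supplied precisely the details the paper omits.
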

\begin{proof}
The proof is similar to %and simpler than
 the proof of Proposition~\ref{prop:algomain}.
The details are left to the reader.
\end{proof}
\begin{remark}
We have  calculated a finite generating set  $\tilde{\kappa}(\GGG_{\Sigma})\cup W(\Sigma, \Lten)$
of $\Stab(\gen{\Sigma}, \Lten)$,
and hence we can obtain a finite generating set of $\Stab(\Phi_f, \Lten)$
by applying Algorithm~\ref{algo:generalStab} to the action of $\Stab(\gen{\Sigma}, \Lten)$ on the  set 
of $\ADE$-configurations of roots in $\gen{\Sigma}$. 
This method, however, takes too much time and memory for many $f\in \Emb(\Phi)$.
Therefore we use the following method.
\end{remark}
We put
$$
V:=\gen{\Sigma}\tensor\R=\gen{\Phi_f}\tensor \R.
$$
Then $\Stab(\gen{\Sigma}, \Lten)$ acts on $V$ via the restriction homomorphism $\res$ (see~\eqref{eq:commsplits}).
We denote the action of $g\in \Stab(\gen{\Sigma}, \Lten)$ on $V$ simply
by $v\mapsto v^g$
without writing $\res$.
For an $\ADE$-configuration $\Psi$ of roots of $\gen{\Sigma}$, we put
$$
\Gamma(\Psi):=\set{y\in V}{\intf{y, r}\ge 0\;\;\textrm{for all}\;\; r\in \Psi}.
$$
By the assumption on $f$ (see Remark~\ref{rem:varphi0}), we have
$$
\Gamma(\Sigma)\subset \Gamma(\Phi_f).
$$
We calculate the finite set
$$
\WWW(\Phi_f, \Sigma):=\set{w\in W(\Sigma, \Lten)}{\Gamma(\Sigma)^w\subset \Gamma(\Phi_f)}.
$$
Recall that $\tilde{\kappa}(\GGG_{\Sigma})$ is a generating set of 
the subgroup $\Stab(\Sigma, \Lten)$ of  $\Stab(\gen{\Sigma}, \Lten)$.
Therefore, by Algorithm~\ref{algo:generalStab},  we can calculate a finite subset
$$
\RRR_{\Phi_f, \Sigma}=\{h_1, \dots, h_N\}
$$
 of $\Stab(\Sigma, \Lten)$ such that
$g\mapsto \Phi_f^g$ gives a bijection from $\RRR_{\Phi_f,  \Sigma}$
to the orbit 
$$
\OOO(\Phi_f):=\set{\Phi_f^g}{g\in \Stab(\Sigma, \Lten)}.
$$ 
\par
We set
$\GGG\sprime:=\{\;\;\}$.
When the calculation below terminates,
this set $\GGG\sprime$ is the desired generating set $\GGG\sprime_{\Phi}$ of $\Stab(\Phi_f, \Lten)$.
Recall that  the upper row of~\eqref{eq:commsplits} is a splitting exact sequence.
Hence each element $g$ of $\Stab(\gen{\Sigma}, \Lten)$ is 
uniquely written as $\tilde{\kappa}(g) w$,
where $w\in W(\Sigma, \Lten)$.
We consider the coset decomposition
$$
\Stab(\gen{\Sigma}, \Lten)=\bigsqcup_{w\in W(\Sigma, \Lten)}\; \Stab(\Sigma, \Lten)\cdot w, 
$$
and for each $w\in W(\Sigma, \Lten)$, we consider the set
$$
\Xi(w):=( \Stab(\Sigma, \Lten)\cdot w)\cap \Stab(\Phi_f, \Lten).
$$
If $w\notin \WWW(\Phi_f, \Sigma)$, then we obviously have $\Xi(w)=\emptyset$.
Therefore we assume that $w\in  \WWW(\Phi_f, \Sigma)$.
We then calculate the image 
$\OOO(\Phi_f)^w$ of the orbit $\OOO(\Phi_f)$ by $w$.
If $\OOO(\Phi_f)^w$ 
does not contain $\Phi_f$, then  we  have $\Xi(w)=\emptyset$.
Therefore we assume that  there exists an element $h_i$ of $\RRR_{\Phi_f,  \Sigma}$ such that $\Phi_f^{h_i w}=\Phi_f$.
We add $h_i w$ in $\GGG\sprime$.
Every element of $\Xi(w)$ is written uniquely as $h_i g w$, where $g\in \Stab(\Sigma, \Lten)$.
Since $h_i g w = h_i w (w\inv g w)$, 
an element $g\in \Stab(\Sigma, \Lten)$ satisfies $h_i g w\in \Xi(w)$ if and only if
$(\Phi_f^{w\inv})^g =\Phi_f^{w\inv}$.
Using the finite generating set $\tilde{\kappa}(\GGG_{\Sigma})$ of $\Stab(\Sigma, \Lten)$
and Algorithm~\ref{algo:generalStab},
we calculate a finite generating set $\GGG\spprime (w)$ of
$$
\Stab(\Phi_f^{w\inv}, \Sigma, \Lten):=\set{g\in \Stab(\Sigma, \Lten)}{(\Phi_f^{w\inv})^g =\Phi_f^{w\inv}}.
$$
We then append $w\inv \GGG\spprime(w) w$ to $\GGG\sprime$.
%
%\par
%\medskip
Thus a finite generating set  $\GGG\sprime_{\Phi}$ of $\Stab(\Phi_f, \Lten)$ is computed.
\par
\medskip
The group $\Stab(\Phi_f, \Lten)$ is,  in general,  infinite.
However we have examples as follows.
\begin{example}
We consider the case where  $\tau(\Phi)=8 A_1$ and $\tau(\Sigma)=E_8$
(no.~88 of Table~\ref{table:main1}).
In this case,
the method above 
still takes too much computation time,
but we can calculate $\Stab(\Phi_f, \Lten)$ as follows.
Since $\primR_f= \gen{\Sigma}$ is unimodular,
$\Lten$ is the orthogonal direct-sum of $\gen{\Sigma}$
and a hyperbolic plane $U$.
Hence $\Stab(\gen{\Sigma}, \Lten)$ is contained in the subgroup 
$\OG(\gen{\Sigma})\times \OG^+(U)$ of $\OG^+(\Lten)$.
Note that $\OG^+(U)$ is of order $2$.
Let $g_U$ be the non-trivial element of $\OG^+(U)$.
Then the kernel of the natural homomorphism
$$
\Stab(\Phi_f, \Lten) \to \Aut(\Phi)
$$
is generated by $(\id, g_U)\in \OG(\gen{\Sigma})\times \OG^+(U)$,
and the image of this homomorphism is
$\shortset{\sigma\in \Aut(\Phi)\cong \SSSS_8}{\CCC^{\sigma}=\CCC}$, 
where $\CCC$ is the  doubly-even linear code in $\discg{\gen{\Phi}}\cong \F_2^8$
corresponding to the even overlattice $\primR\cong\gen{\Sigma}$,
that is, $\CCC$ is an extended Hamming code~\cite[Chapter 1]{EbelingBook}.
Therefore we have $|\Stab(\Phi_f, \Lten)|=2688$,
and we can easily obtain a generating set of $\Stab(\Phi_f, \Lten)$.
\par 
The case where  $\tau(\Phi)=9 A_1$ and $\tau(\Sigma)=A_1+E_8$
(no.~146 of Table~\ref{table:main1}) is also treated in the similar method.
We have $|\Stab(\Phi_f, \Lten)|=1344$ in this case.
\end{example}
\section{Geometric realizability}\label{sec:geom}
Let $\Phi$ be an $\ADE$-configuration with $|\Phi|<10$.
An embedding $f\colon \Phi\inj\Lten$ is said to be \emph{geometrically realized
by an $\RDP$-Enriques surface $(Y, \rho)$} if there exists an isometry
$\SY\isom \Lten$ that maps $\PPP_Y$ to $\Pten$ and $\Phi_{\rho}$ to  $\Phi_{f}$  bijectively. 
If  an embedding $f\sprime\colon \Phi\sprime\inj\Lten$ is equivalent (in the sense of Definition~\ref{def:equivL10})
to a geometrically  realizable embedding $f\colon \Phi\inj\Lten$, 
then $f\sprime$ is also geometrically realizable.
The purpose of this section is to introduce a lattice $\primM_f$
corresponding to the lattice $\primM_{\rho}$ associated with  an $\RDP$-Enriques surface $(Y, \rho)$, and 
give a criterion 
for the geometric realizability. 
By means of these tools, we classify 
the strong equivalence classes of $\RDP$-Enriques surfaces,  and prove Theorem~\ref{thm:strongmain}. 
%of an $\ADE$-configuration into $\Lten$.
\par
For a lattice $L$,
let $L(2)$ denote the lattice obtained from $L$ by multiplying 
the intersection form  %$\intfvoid$ 
by $2$. 
The orthogonal direct-sum of lattices $L$ and $L\sprime$
is denoted by $L\oplus L\sprime$.
\subsection{Geometry of an Enriques involution}\label{subsec:geomenr}
An involution $\enrinvol\colon X\to X$
of a $K3$ surface $X$ is said to be an  \emph{Enriques involution}
if $\enrinvol$ has no  fixed points,
or equivalently, the quotient $X/\gen{\enrinvol}$ is an Enriques surface.
The  following  follows from the classification 
of $2$-elementary $K3$ surfaces 
due to  Nikulin~\cite{Nikulin1980}.
\begin{proposition}[Nikulin]  \label{prop:enrinvolNikulin}
Let $X$ be a $K3$ surface,
and let $\enrinvol\colon X\to X$ be an involution,
which acts on $H^2(X,\Z)$ from the right by the pull-back.
Suppose that $\enrinvol$ acts on $H^0(X, \Omega_X^2)$ as the multiplication by $-1$.
Let $\SX^+$ denote  the primitive sublattice $\shortset{v\in\SX}{v^\enrinvol=v}$
of $\SX$.
Then $\enrinvol$ is  an Enriques involution if and only if
$\SX^+$ is isomorphic to $\Lten (2)$.
%$\rank(\SX^+)=10$ and $\discg{\SX^+}\cong \F_2^{10}$,
%where $\discg{\SX^+}$ is the discriminant group of $\SX^+$.
\qed
\end{proposition}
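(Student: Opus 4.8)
The plan is to reduce both numerical hypotheses to a single statement about the invariant lattice and then to read off fixed-point-freeness from it. First I would replace $\SX^+$ by the full invariant lattice $H^2(X,\Z)^+:=\shortset{v\in H^2(X,\Z)}{v^\enrinvol=v}$. Since $\enrinvol$ acts as $-1$ on $H^0(X,\Omega_X^2)=H^{2,0}$ and hence also on the conjugate space $H^{0,2}$, the eigenspace $H^2(X,\Z)^+\tensor\C$ is contained in $H^{1,1}(X)$; by the Lefschetz $(1,1)$-theorem $\SX=H^{1,1}(X)\cap H^2(X,\Z)$, so $H^2(X,\Z)^+\subset\SX$ and therefore $H^2(X,\Z)^+=\SX^+$. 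Because $H^2(X,\Z)$ is even unimodular, the standard theory of involutions of unimodular lattices (Nikulin) shows that $\SX^+$ and its orthogonal complement are mutually primitive and $2$-elementary with isomorphic discriminant groups. Thus the hypotheses $\rank(\SX^+)=10$ and $\discg{\SX^+}\cong\F_2^{10}$ are intrinsic conditions on the $2$-elementary even hyperbolic lattice $\SX^+$ of signature $(1,9)$.

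For the ``only if'' direction I would argue directly. If $\enrinvol$ is an Enriques involution, then $Y:=X/\gen{\enrinvol}$ is an Enriques surface and the quotient map $\pi\colon X\to Y$ is an étale double covering; as recalled in the introduction, $\pi^*\SY=\SX^+$, and since $\pi$ has degree $2$ we have $\intf{\pi^*x,\pi^*y}=2\intf{x,y}$. As $\SY\cong\Lten$ is even unimodular of rank $10$, this yields $\SX^+\cong\Lten(2)=U(2)\oplus E_8(-2)$, which has rank $10$ and discriminant group $(\Z/2\Z)^{10}$. Both conditions follow.

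For the converse I would combine a classification step with a deformation step, and I expect the latter to be the main obstacle. By Nikulin's classification of $2$-elementary even lattices, such a lattice of signature $(1,9)$ is determined up to isometry by its invariants $(r,a,\delta)$, and the realizability constraints force $\delta=0$ as soon as $r=a=10$; hence $\rank(\SX^+)=10$ and $\discg{\SX^+}\cong\F_2^{10}$ already identify $\SX^+\cong\Lten(2)$ with $(r,a,\delta)=(10,10,0)$. It then remains to see that an anti-symplectic involution with these invariants has no fixed points, which is the genuinely geometric input. One route is Nikulin's explicit description of the fixed locus of a non-symplectic involution of order two in terms of $(r,a,\delta)$, which is empty precisely when $(r,a,\delta)=(10,10,0)$. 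The route I would prefer is deformation-theoretic: the deformation type of the pair $(X,\enrinvol)$ depends only on the isometry class of the action on $H^2(X,\Z)$, hence only on $(r,a,\delta)$, and the topological type of the fixed locus is a deformation invariant; since the universal covering of any Enriques surface provides, via the ``only if'' part, an involution with invariants $(10,10,0)$ and empty fixed locus, every involution with these invariants is fixed-point-free, i.e. an Enriques involution.
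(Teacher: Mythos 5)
The paper offers no proof of this proposition at all --- it is quoted from Nikulin~\cite{Nikulin1980} --- so your argument has to stand on its own merits. Your reduction of $\SX^+$ to the full invariant lattice $H^2(X,\Z)^+$ and your ``only if'' direction are correct: for the \'etale covering $\pi\colon X\to Y$ of the Enriques quotient one has $\pi^*\SY=\SX^+$ and $\intf{\pi^*x, \pi^*y}=2\intf{x,y}$, hence $\SX^+\cong\Lten(2)$. The ``if'' direction, however, collapses at its pivotal claim, namely that ``the realizability constraints force $\delta=0$ as soon as $r=a=10$''. That claim is false. The even $2$-elementary hyperbolic lattice $S:=\gen{2}\oplus\gen{-2}^{\oplus 9}$ has invariants $(r,a,\delta)=(10,10,1)$, and it \emph{is} realized as the invariant lattice of an involution satisfying all the hypotheses: $S$ embeds primitively into $\LK$ (this can be checked with Proposition~\ref{prop:genus} and the criterion of Remark~\ref{rem:genus}; the case $(10,10,1)$ also appears in Nikulin's classification of non-symplectic involutions); letting $T$ be the orthogonal complement, of signature $(2,10)$, the isometry acting as $+1$ on $S$ and as $-1$ on $T$ extends to $\LK$ because $\discg{T}$ is $2$-elementary --- exactly the gluing argument used in the proof of Theorem~\ref{thm:geom3} --- and for a very general period point in $T\tensor\C$ the Torelli theorem produces a $K3$ surface $X$ with $\SX\cong S$ and an involution $\enrinvol$ acting as $-1$ on $H^{2,0}$ with $\SX^+\cong S$. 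This $\enrinvol$ satisfies $\rank(\SX^+)=10$ and $\discg{\SX^+}\cong\F_2^{10}$, yet it is \emph{not} an Enriques involution: by your own ``only if'' direction an Enriques involution would have $\SX^+\cong\Lten(2)$, whose discriminant form is $\Z$-valued ($\delta=0$), whereas $\discf{S}$ takes the value $1/2$. Geometrically, this $\enrinvol$ fixes an elliptic curve, and $X/\gen{\enrinvol}$ is a Halphen surface of index $2$ (a rational elliptic surface with one multiple fibre of multiplicity $2$), with $X$ the double cover branched along a smooth member of $|-2K|$.

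The gap is therefore essential, not a repairable slip: the discriminant \emph{group} cannot distinguish the two realizable cases $(10,10,0)$ and $(10,10,1)$; only the discriminant \emph{form} (the parity invariant $\delta$) can, so Nikulin's theorem must be invoked in the form ``$\enrinvol$ is an Enriques involution if and only if $\SX^+\cong\Lten(2)$'', equivalently $(r,a,\delta)=(10,10,0)$. Granting that stronger hypothesis, both of your proposed endgames are sound --- Nikulin's table of fixed loci (empty precisely for $(10,10,0)$), or your deformation argument combining the uniqueness of the lattice involution with given invariants, the connectedness of the moduli of $K3$ surfaces with a non-symplectic involution of prescribed type, the deformation invariance of the topological type of the fixed locus, and the Enriques examples realizing $(10,10,0)$ --- but neither can be launched from ``$\rank(\SX^+)=10$ and $\discg{\SX^+}\cong\F_2^{10}$'' alone. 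Note that this issue does not propagate into the paper: in its sole application of the proposition, in the proof of Theorem~\ref{thm:geom3}, the invariant lattice is $\mu(\varpi^*\Lten)\cong\Lten(2)$, for which $\delta=0$ does hold, so the stronger form of Nikulin's theorem applies there.
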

Let $\enrinvol\colon X\to X$ be an Enriques involution,
and let $\pi\colon X\to Y$ denote  the universal covering of the Enriques surface $Y:=X/\gen{\enrinvol}$.
Then $\SY$ is isomorphic to $\Lten$,
and $\pi^*\colon \SY\to \SX$
induces an isometry $\SY(2)\cong \pi^*\SY=\SX^+$.
We also have $\pi^{*-1}(\PPP_X)=\PPP_Y$.
By an isometry $\SY\cong \Lten$ that maps $\PPP_Y$ to $\PPP_{10}$,
the notion of Vinberg chambers  in $\PPP_Y$ makes sense.
We also use the notation~\eqref{eq:Sperp1} and~\eqref{eq:Sperp2} for $\SY$.
We put
\begin{eqnarray*}
\Nef(X) &:=& \set{x\in \PPP_X}{\intf{x, C\sprime} \ge 0 \;\;\textrm{for all curves}\;\; C\sprime\subset X}, \\
\Nef(Y) &:=& \set{x\in \PPP_Y}{\,\intf{x, C} \ge 0 \;\;\,\textrm{for all curves}\;\; C\;\subset Y}. 
\end{eqnarray*}
It is well-known that  $\Nef(X)$ is a standard fundamental domain of the action of $W(\SX)$ on $\PPP_X$.
It is obvious that $\pi^{*-1}(\Nef(X))=\Nef(Y)$.
Since $\Nef(Y)$ is bounded by the hyperplanes $([C])\sperp$ of $\PPP_Y$
perpendicular to the classes of smooth rational curves $C$ on $Y$,
and a smooth rational curve on $Y$ has self-intersection number $-2$, 
the cone $\Nef(Y)$ is tessellated by Vinberg chambers.
\par
In the following,
we fix an ample class $a\in \SY$
such that $\intf{r, a}\ne 0$ for any root $r$ of $\SY$.
For example, we choose an interior point of a Vinberg chamber contained in $\Nef(Y)$.
In particular,
if $R$ is a negative-definite root sublattice of $\SY$,
then $\intf{-, a}$ is an element of $\Hom(R, \R)\spcirc$ in the notation of Section~\ref{subsec:negdefroot}.
Since $\pi^*a$ is  ample on $X$,
we have  $\intf{r\sprime, \pi^*a}\ne 0$ for any root $r\sprime$ of $\SX$.
We put
$$
N_{Y}:=\set{v\in \SX}{\intf{v, y}=0\;\; \textrm{for all}\;\; y\in \pi^*\SY}.
$$
%Let $N_{Y}$ denote the orthogonal complement of $ \pi^*\SY$  in $\SX$.
If $N_{Y}$ contained a root, then there would be an effective divisor of $X$ contracted by $\pi$,
which is absurd.
Hence we have the following:
\begin{lemma}\label{lem:Npi}
The negative-definite even lattice $N_{Y}$ contains no roots.
\qed
\end{lemma}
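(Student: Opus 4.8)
The plan is to turn the condition defining $N_Y$ into a positivity contradiction against the fixed ample class $\pi^*a$; in fact the statement is almost immediate from the observation recorded just before the lemma, that $\intf{r\sprime, \pi^*a}\ne 0$ for every root $r\sprime$ of $\SX$. So I would suppose, toward a contradiction, that $r\in N_Y$ is a root. Then $r$ is in particular a root of $\SX$, whence $\intf{r, \pi^*a}\ne 0$ by that observation. On the other hand $\pi^*a$ lies in $\pi^*\SY$, and $r\in N_Y$ is by definition orthogonal to all of $\pi^*\SY$, so $\intf{r, \pi^*a}=0$ --- a contradiction. Hence $N_Y$ contains no root.

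For completeness I would also recall why the cited fact holds, since it carries the only geometric content. Given a root $r$ of $\SX$, Riemann--Roch on the $K3$ surface $X$ gives $\chi(\mathcal{O}_X(r))=2+\frac12\intf{r,r}=1$, and Serre duality forces $h^0(\mathcal{O}_X(r))+h^0(\mathcal{O}_X(-r))\ge 1$; thus $r$ or $-r$ is the class of a nonzero effective divisor $E$ on $X$ (both $\pm r$ lie in $N_Y$, so either sign is harmless). Since $a$ is ample on $Y$ and $\pi$ is finite, $\pi^*a$ is ample on $X$, and an ample class meets every nonzero effective divisor strictly positively, so $\intf{\pm r, \pi^*a}=\intf{E, \pi^*a}>0$; in particular $\intf{r, \pi^*a}\ne 0$. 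This is also the precise form of the phrase ``effective divisor contracted by $\pi$'': by the projection formula $\intf{\pi_*E, D}=\intf{E, \pi^*D}=0$ for all $D\in\SY$, so a nonzero effective $E$ with $[E]\in N_Y$ would have numerically trivial pushforward under the finite morphism $\pi$, which is absurd.

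The remaining descriptive claims are formal: $N_Y$ is even as a sublattice of the even lattice $\SX$, and it is negative-definite because $\pi^*\SY\cong\SY(2)$ is itself hyperbolic --- it contains $\pi^*a$ with $\intf{\pi^*a, \pi^*a}>0$ --- so its orthogonal complement $N_Y$ inside the hyperbolic lattice $\SX$ must be negative-definite. I expect no genuine obstacle here: the single substantive input, the Riemann--Roch effectivity dichotomy for $(-2)$-classes on a $K3$ surface, is entirely standard, and everything else is the formal incompatibility between orthogonality to an ample class and strict positivity on nonzero effective classes. The only point requiring a little care is to keep the whole argument inside $\SX$ (numerical classes), which is legitimate since ampleness, effectivity, and the intersection pairing all descend to $\SX$.
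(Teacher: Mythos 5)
Your proof is correct and takes essentially the same route as the paper's: the paper's one-line justification --- ``if $N_{Y}$ contained a root, then there would be an effective divisor of $X$ contracted by $\pi$, which is absurd'' --- is exactly your contradiction, resting on the Riemann--Roch effectivity dichotomy for square-$(-2)$ classes on the $K3$ surface $X$ and the ampleness of $\pi^*a$ (i.e.\ finiteness of $\pi$), against the orthogonality of $N_{Y}$ to $\pi^*S_Y$. Your additional details (Serre duality, the projection-formula reading of ``contracted by $\pi$,'' and the signature argument for negative-definiteness) simply make explicit what the paper leaves implicit.
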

Note that $\enrinvol$ acts on $N_{Y}$ as the multiplication by $-1$.
We put
$$
\TTT_{Y}:=\set{t\in N_{Y}}{\intf{t,t}=-4}.
$$
If $r\in \Roots(\SY)$ and $t\in \TTT_{Y}$, 
then  $(\pi^*r+t)/2\in \SX\tensor\Q$ is of square-norm $-2$.
A root $r$ of $\SY$ is said to be \emph{liftable}
if  $r\sprime:=(\pi^*r+t)/2$ is contained in $ \SX$ for some $t\in \TTT_{Y}$,
and when this is the case,
the root $r\sprime$ of $\SX$  is called a \emph{lift} of $r$.
\begin{lemma}\label{lem:twolifts}
For a liftable root $r\in \SY$, there exist exactly two lifts $r\sprime$ and $r\spprime$ of $r$,
and they satisfy $\pi^*r=r\sprime+r\spprime$, 
$ r\spprime=r\sp{\prime\enrinvol}$,  and $\intf{r\sprime, r\spprime}=0$.
\end{lemma}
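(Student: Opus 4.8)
The plan is to produce the second lift as the $\enrinvol$-image of the first, to read the three asserted relations off a short norm computation, and then to prove uniqueness by ruling out every competing candidate with the no-roots property of $N_Y$ (Lemma~\ref{lem:Npi}).

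First I would fix a lift $r\sprime=(\pi^*r+t)/2\in\SX$ with $t\in\TTT_Y$. Since $\pi^*$ realizes $\SY(2)\isom\SX^+$, one has $\intf{\pi^*r,\pi^*r}=2\intf{r,r}=-4$, and $\pi^*r$ is orthogonal to $t\in N_Y$. As $\enrinvol$ fixes $\pi^*\SY$ pointwise and acts as $-1$ on $N_Y$, applying $\enrinvol$ to $r\sprime$ replaces $t$ by $-t$, so $r\sp{\prime\enrinvol}=(\pi^*r-t)/2$ is again a lift, now via $-t\in\TTT_Y$. Putting $r\spprime:=r\sp{\prime\enrinvol}$, the relations $r\sprime+r\spprime=\pi^*r$ and $r\spprime=r\sp{\prime\enrinvol}$ are immediate, while $\intf{r\sprime,r\spprime}=\tfrac14(\intf{\pi^*r,\pi^*r}-\intf{t,t})=0$; and $r\sprime\ne r\spprime$ because $t\ne0$. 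This gives the desired pair.

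The crux, and the step I expect to need the most care, is that there are no further lifts. Given two lifts $r_1\sprime=(\pi^*r+t_1)/2$ and $r_2\sprime=(\pi^*r+t_2)/2$ with $t_1,t_2\in\TTT_Y$, I would examine $u:=r_1\sprime-r_2\sprime=(t_1-t_2)/2$. It lies in $\SX$ and in $N_Y\tensor\Q$, hence in $N_Y$, because the orthogonal complement $N_Y$ of $\pi^*\SY$ in $\SX$ is primitive. A direct computation gives $\intf{u,u}=-2-\tfrac12\intf{t_1,t_2}$, and three constraints then pin down $\intf{t_1,t_2}$: evenness of $N_Y$ forces $\intf{t_1,t_2}\in4\Z$, while negative-definiteness together with the Cauchy--Schwarz inequality gives $\intf{t_1,t_2}^2\le\intf{t_1,t_1}\intf{t_2,t_2}=16$, so $\intf{t_1,t_2}\in\{-4,0,4\}$. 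The value $0$ is exactly what Lemma~\ref{lem:Npi} rules out, since it would make $u$ a root of $N_Y$; the value $-4$ forces $t_2=t_1$, so $r_2\sprime=r_1\sprime$; and the value $4$ forces $t_2=-t_1$, so $r_2\sprime=r_1\spprime$. Hence every lift is one of $r_1\sprime,r_1\spprime$, and the whole weight of the lemma rests on excluding the intermediate case $\intf{t_1,t_2}=0$ via the absence of roots in $N_Y$.
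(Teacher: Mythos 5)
Your proof is correct and takes essentially the same route as the paper: the second lift is $(\pi^*r-t)/2$ (which the paper also identifies, and which equals your $r\sp{\prime\enrinvol}$), and uniqueness is obtained exactly as in the paper by placing $(t_1-t_2)/2$ in $N_{Y}$ and combining Lemma~\ref{lem:Npi} with the Cauchy--Schwarz bound to force $\intf{t_1,t_2}=\pm 4$, hence $t_2=\pm t_1$. The only cosmetic difference is that you spell out the evenness of $N_{Y}$ as a separate constraint, whereas the paper absorbs it into its citation of Lemma~\ref{lem:Npi}.
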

\begin{proof}
If $r\sprime=(\pi^*r+t)/2$ with $t\in \TTT_{Y}$ is a lift of $r$, then so is $r\spprime:=(\pi^*r-t)/2$.
Suppose that $t\sprime\in \TTT_{Y}$ 
gives a lift $(\pi^*r+t\sprime)/2$  of $r$.
Then we have $(t-t\sprime)/2\in N_{Y}$.
By Lemma~\ref{lem:Npi}, 
we see that $\intf{t, t\sprime}=-4$ or $\intf{t, t\sprime}\ge 4$.
By the Cauchy-Schwarz inequality for $N_{Y}$, we have $|\intf{t, t\sprime}|\le 4$.
Therefore we have $t\sprime=\pm t$.
\end{proof}
An effective  divisor $D$ of $Y$ is said to be \emph{splitting}
if there exists an effective divisor $D\sprime$  of $X$ such that 
$\pi^*(D)=D\sprime+\enrinvol(D\sprime)$ and $\intf{D\sprime, \enrinvol(D\sprime)}=0$.
Note that an effective divisor of $Y$ is splitting if each connected component of its support is simply connected.
In particular, 
a smooth rational curve on $Y$ is  splitting.
\begin{lemma}\label{lem:liftablesplitting}
Let $r$ be a root of $\SY$ such that $\intf{r, a}>0$.
Then $r$ is the class of a  splitting
effective divisor of $Y$
if and only if $r$ is liftable.
\end{lemma}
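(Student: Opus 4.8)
The statement to prove is Lemma~\ref{lem:liftablesplitting}: for a root $r$ of $\SY$ with $\intf{r,a}>0$, the class $r$ is represented by a splitting effective divisor if and only if $r$ is liftable. The natural strategy is to prove the two implications separately, translating back and forth between the geometry on $X$ (effective divisors, smooth rational curves) and the lattice-theoretic splitting $\pi^*r = r\sprime + r\spprime$ supplied by Lemma~\ref{lem:twolifts}. The sign condition $\intf{r,a}>0$ is there to orient everything toward effectivity, since $a$ is ample and $\pi^*a$ is ample on $X$.

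\emph{Liftable $\Rightarrow$ splitting.} Suppose $r$ is liftable, so by Lemma~\ref{lem:twolifts} there are exactly two lifts $r\sprime, r\spprime\in\SX$ with $\pi^*r = r\sprime+r\spprime$, $\intf{r\sprime,r\spprime}=0$, and $r\spprime = r\sp{\prime\enrinvol}$. Each $r\sprime, r\spprime$ is a root of $\SX$, hence $\pm r\sprime$ is effective on the $K3$ surface $X$ by Riemann--Roch. I would fix the signs using the ample class: since $\intf{\pi^*r, \pi^*a} = 2\intf{r,a}>0$ and $\pi^*a$ is ample, and $\intf{\pi^*r,\pi^*a} = \intf{r\sprime,\pi^*a}+\intf{r\spprime,\pi^*a}$ with $r\spprime = r\sp{\prime\enrinvol}$ forcing $\intf{r\sprime,\pi^*a}=\intf{r\spprime,\pi^*a}$, I can arrange that $r\sprime$ (hence $r\spprime$) is the class of an effective divisor $D\sprime$ (resp.\ $\enrinvol(D\sprime)$). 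Then $D:=\rho^{-1}$-type pushforward, more precisely the divisor $D$ on $Y$ with $\pi^*D = D\sprime+\enrinvol(D\sprime)$, has class $r$, is effective, and splits with $\intf{D\sprime,\enrinvol(D\sprime)} = \intf{r\sprime,r\spprime}=0$. This gives the splitting effective divisor.

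\emph{Splitting $\Rightarrow$ liftable.} Conversely, suppose $r=[D]$ for a splitting effective divisor $D$, with $D\sprime$ effective on $X$ satisfying $\pi^*D = D\sprime + \enrinvol(D\sprime)$ and $\intf{D\sprime,\enrinvol(D\sprime)}=0$. Put $r\sprime:=[D\sprime]$, so $\pi^*r = r\sprime + r\sp{\prime\enrinvol}$. From $\intf{\pi^*r,\pi^*r} = 2\intf{r,r} = -4$ and $\intf{r\sprime, r\sp{\prime\enrinvol}}=0$ one computes $\intf{r\sprime,r\sprime}+\intf{r\sp{\prime\enrinvol},r\sp{\prime\enrinvol}} = -4$; since $\enrinvol$ is an isometry these two equal terms each give $\intf{r\sprime,r\sprime}=-2$, so $r\sprime$ is a root. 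Now set $t:=r\sprime - r\sp{\prime\enrinvol}$. This vector is anti-invariant under $\enrinvol$, hence lies in $N_Y$ (the anti-invariant part, orthogonal to $\pi^*\SY$), and $\intf{t,t} = \intf{r\sprime,r\sprime} - 2\intf{r\sprime,r\sp{\prime\enrinvol}} + \intf{r\sp{\prime\enrinvol},r\sp{\prime\enrinvol}} = -2-0-2=-4$, so $t\in\TTT_Y$. Finally $r\sprime = (\pi^*r+t)/2$ exhibits $r$ as liftable.

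\emph{Main obstacle.} The routine half is the converse (splitting $\Rightarrow$ liftable), which is essentially the norm bookkeeping above. The delicate half is the forward direction: turning the lift $r\sprime\in\SX$ into an honest \emph{effective} divisor and controlling its $\enrinvol$-image. The key points I expect to need care with are (i) pinning down the sign of $r\sprime$ via the ample class $\pi^*a$ so that it is effective rather than anti-effective, and (ii) verifying that the splitting relation $\pi^*D = D\sprime+\enrinvol(D\sprime)$ descends to a genuine effective divisor $D$ on $Y$ of class $r$ with the orthogonality $\intf{D\sprime,\enrinvol(D\sprime)}=0$ translating exactly into the geometric condition that $D$ is splitting. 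The interplay between the numerical class $r\sprime$ being effective and its realization as a connected/simply-connected configuration (where the remark about simple-connectivity in the lemma's preamble becomes relevant) is where I would concentrate the argument.
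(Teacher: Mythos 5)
Your proof is correct and follows essentially the same route as the paper: the splitting-to-liftable direction by the norm computation giving $t:=[D\sprime]-[\enrinvol(D\sprime)]\in \TTT_Y$ with $[D\sprime]=(\pi^*r+t)/2$, and the converse by using Riemann--Roch on the $K3$ surface to realize the lift $r\sprime$ as an effective divisor $D\sprime$ and descending $D\sprime+\enrinvol(D\sprime)$ to $Y$. The only cosmetic difference is how you see $\intf{r\sprime,\pi^*a}>0$ (via the $\enrinvol$-symmetry of the two lifts, rather than via $t\perp\pi^*\SY$ as in the paper), and your worry in the final paragraph about simple-connectivity is unnecessary here --- that remark is needed only for Lemma~\ref{lem:simplyconnected}, not for this lemma.
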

\begin{proof}
Suppose that $r=[D]$,  where $D$ is a  splitting
effective divisor, and suppose that $\pi^*(D)=D\sprime+\enrinvol(D\sprime)$ with 
$\intf{D\sprime, \enrinvol(D\sprime)}=0$.
Then $t:=[D\sprime]-[\enrinvol(D\sprime)]$ belongs to $ \TTT_{Y}$ and $[D\sprime]=(\pi^*r+t)/2\in \SX$.
Therefore $r$ is liftable.
Conversely, 
suppose that $r$ has lifts $r\sprime$ and $r\spprime=r^{\prime \enrinvol}$.
Since $r\sprime$ is a root of $\SX$ and $\intf{r\sprime, \pi^*a}>0$,
there exists an effective divisor $D\sprime$ of $X$ such that $r\sprime=[D\sprime]$.
Then we have $\pi^* r =r\sprime+r\spprime$, $r\spprime=[\enrinvol(D\sprime)]$ and $\intf{D\sprime, \enrinvol(D\sprime)}=0$
by Lemma~\ref{lem:twolifts}.
Let $D$ be the effective divisor of $Y$ such that $\pi^*(D)=D\sprime+\enrinvol(D\sprime)$.
Then $D$ is splitting and we have $r=[D]$.
\end{proof}
%
%Proposition~\ref{prop:Phih} below
%gives a sufficient condition for the geometric realizability.
%\subsection{Construction of an $\RDP$-Enriques surface}\label{subsec:constructrho}
%Let $Y$ be an Enriques surface as in the previous section.
Let $H$ be a nef divisor of $Y$ such that $\intf{H, H}>0$,
and let $h\in \Nef(Y)\cap \SY$ be the class of $H$.
If $k$ is a sufficiently large and divisible integer,
then the complete linear system $|kH|$ is base-point free
and the Stein factorization of the morphism
$Y\to \P^N$ induced by $|kH|$
gives rise to  an $\RDP$-Enriques surface
$$
\rho_h\colon Y\to \Ybar.
$$
We  calculate the set $\Phi_h:=\Phi_{\rho_h}$ of the classes of smooth rational curves contracted by $\rho_h$.
%Note that $\Phi_h$ is an $\ADE$-configuration of roots of $\SY$.
Let $[h]\sperp$ denote the orthogonal complement in $\SY$ of the sublattice $[h]:=\Z h$ generated by $h$.
Then $[h]\sperp$ is negative-definite.
We put
\begin{equation}\label{eq:PihLh}
\Pi_h^+:=\set{r\in \Roots([h]\sperp)}{\intf{r, a}>0},
\;\,
L_h^+:=\set{r\in \Pi_h^+}{\textrm{$r$ is liftable}},
\end{equation}
and consider the root sublattice $\gen{L_h^+}$ of $[h]\sperp$ generated by $L_h^+$
(possibly of rank $0$).
\begin{proposition}\label{prop:Phih}
The $\ADE$-configuration  $\Phi_h$ of  the $\RDP$-Enriques surface $(Y, \rho_h)$
is equal to the $\ADE$-basis of $\gen{L_h^+}$
associated with  the linear form $\intf{-, a}$ %\in\Hom(\gen{L_h^+}, \R)\spcirc$
by the correspondence  described in Section~\ref{subsec:negdefroot}.
\end{proposition}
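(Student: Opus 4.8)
The plan is to identify the two descriptions of $\Phi_h$ by first matching the set $L_h^+$ of liftable positive roots orthogonal to $h$ with the effective $(-2)$-classes contracted by $\rho_h$, thereby showing $\gen{L_h^+}=R_{\rho_h}$ where $R_{\rho_h}:=\gen{\Phi_h}$, and then invoking the characterization of an $\ADE$-basis by a linear form from Section~\ref{subsec:negdefroot} to pin down which basis $\intf{-, a}$ selects.

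First I would describe the contracted locus. Since $H$ is nef and $\intf{h, h}>0$, and $\rho_h$ is the Stein factorization of the morphism attached to $|kH|$, a curve $C\subset Y$ is contracted by $\rho_h$ if and only if $\intf{h, [C]}=0$. If $C$ is such a contracted irreducible curve, then $[C]\in [h]\sperp$; as $[h]\sperp$ is negative-definite and $[C]\ne 0$, we get $[C]^2<0$, whence adjunction gives $[C]^2=-2$, so $C$ is a smooth rational curve and $[C]\in \Phi_h$. In particular each class in $\Phi_h$ is a root lying in $[h]\sperp$, satisfies $\intf{[C], a}>0$ because $a$ is ample, and is liftable by Lemma~\ref{lem:liftablesplitting} since a smooth rational curve is splitting. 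Hence $\Phi_h\subset L_h^+$, and consequently $R_{\rho_h}\subset \gen{L_h^+}$.

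Next I would prove the reverse inclusion $\gen{L_h^+}\subset R_{\rho_h}$, which is the crux. Let $r\in L_h^+$. By Lemma~\ref{lem:liftablesplitting}, $r$ is the class of a splitting effective divisor $D$, and $\intf{h, [D]}=0$ since $r\in [h]\sperp$. Writing $D=\sum a_i C_i$ as a sum of its irreducible components, nefness of $h$ forces $\intf{h, [C_i]}=0$ for every $i$, so each $C_i$ is contracted, hence a smooth rational curve with $[C_i]\in \Phi_h$ by the previous paragraph. Therefore $r=[D]\in R_{\rho_h}$, and we conclude $\gen{L_h^+}=R_{\rho_h}$; in particular $\Phi_h$ is an $\ADE$-basis of the negative-definite root lattice $\gen{L_h^+}$.

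Finally I would match the specific basis. By the choice of $a$ we have $\intf{r, a}\ne 0$ for every root $r$, so $\ell:=\intf{-, a}$ restricts to an element of $\Hom(\gen{L_h^+}, \R)\spcirc$. Since every root of $\gen{L_h^+}=R_{\rho_h}$ is a nonnegative or nonpositive integral combination of the $\ADE$-basis $\Phi_h$, and $\ell$ is strictly positive on each element of $\Phi_h$, the sign of $\ell$ on every root agrees with the sign prescribed by $\Phi_h$; hence $\ell$ lies in the connected component of $\Hom(\gen{L_h^+}, \R)\spcirc$ whose associated $\ADE$-basis is $\Phi_h$, so that $\Phi_{\ell>0}=\Phi_h$ under the bijection of Section~\ref{subsec:negdefroot}. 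This is exactly the assertion. I expect the only delicate step to be the reverse inclusion of the third paragraph, where one must rule out contracted components that are non-rational or of nonnegative self-intersection; this is precisely where $\intf{h,h}>0$ (forcing $[C_i]^2<0$) together with the splitting/liftability dictionary of Lemma~\ref{lem:liftablesplitting} do the work.
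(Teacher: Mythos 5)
Your proof is correct and takes essentially the same route as the paper's: both arguments rest on Lemma~\ref{lem:liftablesplitting} to identify the elements of $L_h^+$ with classes of splitting effective divisors contracted by $\rho_h$, use nefness of $h$ to control the irreducible components, and conclude by the uniqueness of the $\ADE$-basis of $\gen{L_h^+}$ selected by the linear form $\intf{-,a}$. The only cosmetic difference is that the paper phrases the middle step as ``$D_r$ is irreducible if and only if $r$ is indecomposable in $L_h^+$,'' whereas you obtain $\gen{L_h^+}=\gen{\Phi_h}$ by decomposing each splitting divisor into its irreducible components; the content is identical.
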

\begin{proof} 
By Lemma~\ref{lem:liftablesplitting},
every element $r$ of $L_h^+$ is the class 
of a splitting effective divisor $D_r$  contracted by $\rho_h$.
Since the class of any smooth rational curve contracted by $\rho_h$
is in $L_h^+$,
the divisor $D_r$ is irreducible
if and only if $r\in L_h^+$ is indecomposable in  $L_h^+$
in the sense of Definition~\ref{def:indecomp}.
Therefore $\Phi_{h}$ is equal to 
 the set $\Phi_h\sprime$ of indecomposable elements of $L_h^+$.
In particular, the set  $\Phi_h\sprime$ is an $\ADE$-configuration of roots of $\SY$,
and the vectors of $\Phi_h\sprime$ are linearly independent.
Since every element of $L_h^+$ is a linear combination of the  indecomposable elements,
we have $\gen{\Phi_h\sprime}= \gen{L_h^+}$.
Since the $\ADE$-basis of the root lattice $ \gen{L_h^+}$
associated with the linear form $\intf{-, a}\in \Hom(\gen{L_h^+}, \R)\spcirc$ is unique,
we obtain the proof.
%we have $\Phi_h\sprime=\Phi_h\spprime$.
\end{proof}
\subsection{Lattices associated with an $\RDP$-Enriques surface}\label{subsec:latticesassociatedwith}
Let $(Y, \rho)$ be an $\RDP$-Enriques surface,
and $a\in \SY$ an ample class
such that $\intf{a, r}\ne 0$ holds for any $r\in \Roots(\SY)$.
Let $C_1, \dots, C_n$ be the smooth rational curves on $Y$  contracted by $\rho\colon Y\to \Ybar$,
so that $\Phi_{\rho}=\{[C_1], \dots, [C_n]\}$.
Since 
the Dynkin diagram of $\Phi_{\rho}$ is 
a disjoint union of trees,
we have the following:
\begin{lemma}\label{lem:simplyconnected}
Let $D$ be an effective divisor of $Y$ contracted by $\rho$.
Then each connected component of the support of $D$ is simply connected.
In particular, $D$ is splitting.
\qed
\end{lemma}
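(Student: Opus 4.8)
The plan is to reduce the statement to a standard topological fact: a connected configuration of projective lines whose dual graph is a tree is simply connected. First I would note that every irreducible component of the support of $D$ is a curve contracted by $\rho$, and by hypothesis the curves contracted by $\rho$ are exactly the smooth rational curves $C_1, \dots, C_n$ (the exceptional curves of the minimal resolution of the rational double points of $\Ybar$). Hence $\mathrm{Supp}(D)$ is the union of those $C_i$ that occur in $D$. Since $\Phi_{\rho}=\{[C_1], \dots, [C_n]\}$ is an $\ADE$-configuration of roots, we have $\intf{[C_i], [C_i]}=-2$ and $\intf{[C_i], [C_j]}\in\{0,1\}$ for $i\ne j$; geometrically this means each $C_i$ is a smooth rational curve, two distinct curves meet transversally in at most one point, and the dual graph of $C_1, \dots, C_n$ is precisely the Dynkin diagram of $\Phi_{\rho}$. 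As this diagram is a disjoint union of $A_l$, $D_m$, and $E_n$ diagrams, it is a forest.

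Next I would fix a connected component $\Gamma$ of $\mathrm{Supp}(D)$, say $\Gamma=P_1\cup\cdots\cup P_k$ with each $P_i$ one of the $C_j$. Its dual graph is a connected subgraph of the forest above, hence a tree; in particular it contains no cycle. I would then argue by induction on $k$ using the van Kampen theorem. Ordering the $P_i$ so that $P_1\cup\cdots\cup P_j$ is connected for every $j$ (possible because a finite tree can be built up one leaf at a time), the acyclicity guarantees that $P_{j+1}$ is adjacent to exactly one vertex among $P_1, \dots, P_j$, so it meets $P_1\cup\cdots\cup P_j$ in a single transversal point. Passing to suitable open neighborhoods, van Kampen identifies $\pi_1(P_1\cup\cdots\cup P_{j+1})$ with the free product of $\pi_1(P_1\cup\cdots\cup P_j)$ and $\pi_1(\P^1)$ amalgamated over the (trivial) fundamental group of a point. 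Since $\pi_1(\P^1)=1$ and, by induction, $\pi_1(P_1\cup\cdots\cup P_j)=1$, it follows that $\Gamma$ is simply connected.

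Finally, the assertion that $D$ is \emph{splitting} is immediate: it was already recorded, just before this lemma, that an effective divisor of $Y$ is splitting whenever each connected component of its support is simply connected. I expect the only delicate point to be the topological step, where one must present the union of the $P_i$ as a gluing of open sets to which van Kampen legitimately applies, and must use the tree (acyclic) structure in an essential way: if the dual graph carried a cycle, the same gluing would create a nontrivial loop and $\pi_1$ would become infinite, so the fact that the Dynkin diagram of $\Phi_{\rho}$ is a disjoint union of trees is exactly the hypothesis that makes the argument go through.
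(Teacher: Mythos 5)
Your proposal is correct and follows exactly the paper's line of reasoning: the paper proves this lemma solely by the remark that the Dynkin diagram of $\Phi_{\rho}$ is a disjoint union of trees (hence the \qed attached to the statement), together with the observation recorded just before it that a divisor whose support has simply connected components is splitting. Your van Kampen induction along a tree merely supplies the standard topological details that the paper leaves implicit.
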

Let $\pi\colon X\to Y$ be the universal covering, 
and  let $C_i\sprime$ and $C_i\spprime$ be the two connected components of $\pi\inv (C_i)$.
Lemma~\ref{lem:simplyconnected} implies that, 
interchanging $C_i\sprime$ and $C_i\spprime$ if necessary,
we can assume that
\begin{equation}\label{eq:CiCj}
\intf{C_i, C_j}=\intf{C_i\sprime, C_j\sprime}=\intf{C_i\spprime, C_j\spprime},
\quad{\rm and}\quad  \intf{C_i\sprime, C_j\spprime}=0
\end{equation}
hold for all $i, j$.
We put
$$
\phi ([C_i]):=[C_i\sprime]-[C_i\spprime]\;\;\in\;\;N_{Y}.
$$
Note that  $[C_i\sprime]$ and $[C_i\spprime]$ are the lifts of $[C_i]$ associated with $\phi([C_i])\in \TTT_{Y}$.
\begin{remark}\label{rem:phichoice}
Let $c$ be the number of connected components 
of the Dynkin diagram of $\Phi_{\rho}$.
Then there exist exactly $2^c$ possibilities for the choice of
the map  $\phi$.
\end{remark}
By~\eqref{eq:CiCj},  we have
\begin{equation}\label{eq:phiCiCj}
\intf{\phi ([C_i]),\phi ([C_j])}=2 \intf{[C_i], [C_j]}, 
\end{equation}
and hence $\phi$ defines an embedding 
\begin{equation}\label{eq:phidef}
\phi\colon \gen{\Phi_{\rho}}(2)\inj N_{Y}.
\end{equation}
Recall that $R_{\rho}$ is the sublattice of $\SY$ generated by $\Phi_{\rho}$,
and that $\primR_{\rho}$ is the primitive closure of $R_{\rho}$ in $\SY$.
 Recall also that 
  $\Phi^{\sim}_{\rho}$ is the subset $\{[C_1\sprime], [C_1\spprime], \dots, [C_n\sprime], [C_n\spprime]\}$ of $\Roots(\SX)$,
  that 
 $M_{\rho}$ is the sublattice of $\SX$ generated by  $ \pi^*\SY$ and 
  $\Phi^{\sim}_{\rho}$, 
 and that  $\primM_{\rho}$ is the primitive closure 
 of  $M_{\rho}$ in $\SX$. 
%Since $\pi^*([C_i])=[C_i\sprime]+[C_i\spprime]\in   \pi^*\SY$,
%the lattice 
Then $M_\rho$ is an even overlattice of the orthogonal direct-sum
$$
B_{\rho}:= \pi^*\SY\oplus \Image \phi.
$$
 Thus we obtain  a sequence of inclusions
\begin{equation}\label{eq:rhoseq}
\pi^* \Phi_{\rho}\; \subset\;  
\pi^* R_\rho\; \subset\;  \pi^* \primR_\rho\; \subset\;   \pi^*\SY\;  \subset\; \
B_{\rho} \;  \subset\;   M_\rho\;  \subset\;  \primM_{\rho}.
\end{equation}
%
%Note that $\pi^* R_\rho\cong R_{\rho}(2)$, $\pi^* \primR_\rho\cong \primR_{\rho}(2)$ and 
%$ \pi^*\SY\cong \SY(2)\cong \Lten(2)$.
%Note also that 
%the action of $\enrinvol$ on $\SX$ acts on the sequence~\eqref{eq:rhoseq}.
% 
%
\begin{lemma}\label{lem:Phirhotilde}
We have
$\Phi^{\sim}_{\rho}=\set{r\sprime\in \Roots(\primM_{\rho})}{r\sprime+r^{\prime \enrinvol}\in \pi^* \Phi_{\rho}}$.
\end{lemma}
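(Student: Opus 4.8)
The plan is to prove the asserted equality by establishing both inclusions, with Lemma~\ref{lem:twolifts} as the decisive tool. The inclusion $\Phi^{\sim}_{\rho}\subseteq\set{r\sprime\in\Roots(\primM_{\rho})}{r\sprime+r^{\prime\enrinvol}\in\pi^*\Phi_{\rho}}$ is a direct unwinding of the definitions: each class $[C_i\sprime]$ (and likewise $[C_i\spprime]$) is a root lying in $M_{\rho}\subset\primM_{\rho}$, hence belongs to $\Roots(\primM_{\rho})$, and since $\enrinvol$ interchanges $C_i\sprime$ and $C_i\spprime$ we have $[C_i\sprime]+[C_i\sprime]^{\enrinvol}=[C_i\sprime]+[C_i\spprime]=\pi^*[C_i]\in\pi^*\Phi_{\rho}$.

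For the reverse inclusion I would start from a root $r\sprime\in\Roots(\primM_{\rho})$ with $r\sprime+r^{\prime\enrinvol}=\pi^*[C_i]$ for some $i$, set $r\spprime:=r^{\prime\enrinvol}$ and $t:=r\sprime-r\spprime\in\SX$, and show that $r\sprime$ is a \emph{lift} of $[C_i]$ in the sense defined before Lemma~\ref{lem:twolifts}. The routine verifications are: first, $t\in N_{Y}$, because every $y\in\pi^*\SY=\SX^+$ satisfies $y^{\enrinvol}=y$, so $\intf{r\spprime,y}=\intf{r^{\prime\enrinvol},y^{\enrinvol}}=\intf{r\sprime,y}$ and hence $\intf{t,y}=0$; second, $\enrinvol$ being an isometry gives $\intf{r\spprime,r\spprime}=\intf{r\sprime,r\sprime}=-2$, and expanding the identity $\intf{\pi^*[C_i],\pi^*[C_i]}=2\intf{[C_i],[C_i]}=-4$ applied to $r\sprime+r\spprime$ yields $\intf{r\sprime,r\spprime}=0$; consequently $\intf{t,t}=-4$, so $t\in\TTT_{Y}$, and $r\sprime=(\pi^*[C_i]+t)/2$ exhibits $r\sprime$ as a lift of $[C_i]$.

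To finish I would invoke Lemma~\ref{lem:twolifts}. The class $[C_i]$ is a liftable root: it is the class of a smooth rational curve, hence splitting by Lemma~\ref{lem:simplyconnected}, and its two lifts are precisely $[C_i\sprime]$ and $[C_i\spprime]$, the lifts associated with $\phi([C_i])\in\TTT_{Y}$. Since $r\sprime$ is a lift of $[C_i]$ and there are exactly two lifts, $r\sprime$ coincides with $[C_i\sprime]$ or $[C_i\spprime]$, so $r\sprime\in\Phi^{\sim}_{\rho}$. I do not anticipate a genuine difficulty; the only points requiring attention are the orthogonality $t\in N_{Y}$, which relies on $\enrinvol$ acting trivially on $\pi^*\SY$, and arranging the identity $r\sprime=(\pi^*[C_i]+t)/2$ exactly so that Lemma~\ref{lem:twolifts} applies verbatim.
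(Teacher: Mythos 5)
Your proof is correct, but it takes a genuinely different route from the paper's. The paper argues geometrically: given $r\sprime\in\Roots(\primM_{\rho})$ with $r\sprime+r^{\prime \enrinvol}=\pi^*r$, $r\in\Phi_{\rho}$, it notes $\intf{r\sprime, \pi^* a}>0$, so by Riemann--Roch on the $K3$ surface the root $r\sprime$ is the class of an effective divisor $D\sprime$; the invariant divisor $D\sprime+\enrinvol(D\sprime)$ descends to an effective divisor $D$ of $Y$ with $[D]=r\in\Phi_{\rho}$, forcing $D=C_i$ and hence $D\sprime\in\{C_i\sprime, C_i\spprime\}$. You instead stay entirely inside the lattice: you verify that $t:=r\sprime-r^{\prime \enrinvol}$ lies in $\TTT_{Y}$ (orthogonality to $\pi^*\SY$ because $\enrinvol$ fixes $\pi^*\SY$ pointwise, and $\intf{t,t}=-4$ via the norm count giving $\intf{r\sprime, r^{\prime \enrinvol}}=0$) and that $r\sprime=(\pi^*[C_i]+t)/2$, so $r\sprime$ is a lift of $[C_i]$; then the uniqueness statement of Lemma~\ref{lem:twolifts} pins $r\sprime$ down to $[C_i\sprime]$ or $[C_i\spprime]$, since those two classes are already lifts. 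Both arguments are sound, and the trade-off is this: the paper's proof needs the ample class $a$ and the effectivity of roots, but is immediate once that machinery is in place; your proof concentrates all geometric input into the already-established Lemma~\ref{lem:twolifts} (whose proof rests on Lemma~\ref{lem:Npi}), so the new step is purely lattice-theoretic and would apply verbatim in a setting where no surface is yet present --- indeed, essentially the same counting-of-lifts argument is what the paper uses later in the proof of Proposition~\ref{prop:U}, where only the lattice-theoretic data $M_f$ is available.
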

\begin{proof}
%It is obvious that every $r\sprime\in \Phi^{\sim}_{\rho}$
%satisfies $r\sprime+r^{\prime \enrinvol}\in \pi^* \Phi_{\rho}$.
The inclusion $\subset$ is obvious.
Suppose that $r\sprime\in \Roots(\primM_{\rho})$
satisfies $r\sprime+r^{\prime \enrinvol}=\pi^*r$ for some $r\in  \Phi_{\rho}$.
Since $\intf{r, a}>0$, we have $\intf{r\sprime, \pi^* a}>0$,
and hence we have an effective divisor $D\sprime$ of $X$
such that $r\sprime=[D\sprime]$.
Let $D$ be the effective divisor of $Y$
such that $\pi^* D=D\sprime +\enrinvol(D\sprime)$.
Then $r=[D]\in \Phi_{\rho}$ and hence $D=C_i$ for some $i$.
Therefore $r\sprime \in \Phi^{\sim}_{\rho}$.
\end{proof}
We can now prove Lemma~\ref{lem:QYrho} stated in the introduction. % Introduction.
\begin{proof}[Proof of Lemma~\ref{lem:QYrho}]  
Since $\pi^*S_Y$ (resp.~$\pi^{\prime*}S_{Y\sprime}$)
is the invariant sublattice of the Enriques involution on $X$ (resp.~on $X\sprime$),
the strong equivalence isometry $\mu$ is compatible with the action of 
the Enriques involutions on $\primM_{\rho}$ and $\primM_{\rho\sprime}$.
Since $\mu_Y$ maps $\Phi_{\rho}$ to $\Phi_{\rho\sprime}$
bijectively, 
Lemma~\ref{lem:Phirhotilde} implies that  $\mu$ maps $\Phi^{\sim}_{\rho}$ to $\Phi^{\sim}_{\rho\sprime}$
bijectively.
%Therefore $\mu$ maps $M_{\rho}$ to $M_{\rho\sprime}$ isomorphically.
\end{proof}
\begin{proposition}\label{prop:liftrho}
The set $\shortset{r\in \Roots(\primR_{\rho})}{\text{\rm $r$ is liftable}}$ is equal to
$\Roots(R_{\rho})$.
\end{proposition}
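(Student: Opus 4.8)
The plan is to prove the two inclusions separately. The inclusion $\Roots(R_{\rho})\subset\shortset{r\in\Roots(\primR_{\rho})}{r\text{ is liftable}}$ will follow from a direct computation with the lift classes $[C_i\sprime]$, while the reverse inclusion will come from a geometric contraction argument fed by Lemma~\ref{lem:liftablesplitting}.

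First I would treat $\Roots(R_{\rho})\subset\shortset{r\in\Roots(\primR_{\rho})}{r\text{ is liftable}}$. Since $R_{\rho}\subset\primR_{\rho}$, every $r\in\Roots(R_{\rho})$ is already a root of $\primR_{\rho}$, so only liftability needs checking. Writing such an $r$ as $r=\sum a_i[C_i]$ with $a_i\in\Z$, I would set $r\sprime:=\sum a_i[C_i\sprime]$ and $r\spprime:=\sum a_i[C_i\spprime]=r\sp{\prime\enrinvol}$ in $\SX$. The relations~\eqref{eq:CiCj} give $\intf{r\sprime,r\sprime}=\intf{r,r}=-2$ and $\intf{r\sprime,r\spprime}=0$, so that $t:=r\sprime-r\spprime=\sum a_i\,\phi([C_i])$ lies in $N_{Y}$ and satisfies $\intf{t,t}=-4$, i.e.\ $t\in\TTT_{Y}$. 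Since $\pi^*r+t=2r\sprime$, we obtain $r\sprime=(\pi^*r+t)/2\in\SX$, which exhibits $r$ as liftable. This step is a routine lattice check.

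The reverse inclusion is the substantive one. Let $r\in\Roots(\primR_{\rho})$ be liftable. As both liftability and membership in $R_{\rho}$ are invariant under $r\mapsto-r$, and $\intf{r,a}\ne0$ by the choice of $a$, I may assume $\intf{r,a}>0$. Then Lemma~\ref{lem:liftablesplitting} produces a splitting effective divisor $D$ on $Y$ with $r=[D]$. The key point is that $\primR_{\rho}$ lies in $[h]\sperp$, where $h\in\SY$ is the class of the pull-back $\rho^*A$ of an ample divisor $A$ on $\Ybar$: indeed each $[C_i]$ is contracted by $\rho$, so $R_{\rho}\subset[h]\sperp$, and since $[h]\sperp$ is primitive in $\SY$ its primitive closure $\primR_{\rho}$ is contained in $[h]\sperp$ as well. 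Hence $\intf{[D],h}=0$. Writing $D=\sum m_jE_j$ with $m_j>0$ and $E_j$ irreducible, nefness of $h$ gives $\intf{[E_j],h}\ge0$, and the vanishing of the sum forces $\intf{[E_j],h}=0$ for every $j$. As $h$ is the pull-back of an ample class on $\Ybar$, this means each $E_j$ is contracted by $\rho$ and hence is one of the $C_i$; therefore $r=[D]=\sum m_j[E_j]\in R_{\rho}$, and being a root, $r\in\Roots(R_{\rho})$.

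I expect the main obstacle to be precisely this second inclusion, namely the passage from ``$r\in\primR_{\rho}$ is a liftable root'' to ``$r$ is the class of a $\rho$-contracted effective divisor.'' Liftability is exactly what converts the abstract root $r$ into an effective class via Lemma~\ref{lem:liftablesplitting}, and the containment $\primR_{\rho}\subset[h]\sperp$ together with the nefness of $h$ is what confines the corresponding divisor to the exceptional locus of $\rho$. By contrast, the first inclusion is a formal consequence of the intersection relations~\eqref{eq:CiCj} and requires no geometric input.
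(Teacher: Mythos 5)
Your proof is correct, and it splits into two halves of unequal novelty. For the substantive inclusion (liftable roots of $\primR_{\rho}$ lie in $\Roots(R_{\rho})$) you follow exactly the paper's route: reduce to $\intf{r,a}>0$, invoke Lemma~\ref{lem:liftablesplitting} to get a splitting effective divisor $D$ with $r=[D]$, and use orthogonality of $\primR_{\rho}$ to the class $h_{\rho}$ of the pull-back of an ample divisor on $\Ybar$ to force $D$ into the exceptional locus; your extra details (primitivity of $[h]\sperp$ to pass from $R_{\rho}$ to $\primR_{\rho}$, and the component-by-component nefness argument showing each $E_j$ is contracted) are exactly what the paper leaves implicit. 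For the other inclusion, however, you take a genuinely different and arguably cleaner path: instead of the paper's geometric chain --- represent a positive root $r\in\Roots(R_{\rho})$ by an effective divisor $D=\sum a_iC_i$ (which tacitly uses the root-system fact that roots positive against $a$ are non-negative combinations of the simple roots $\Phi_{\rho}$), then apply Lemma~\ref{lem:simplyconnected} and Lemma~\ref{lem:liftablesplitting} --- you construct the lift directly: from $r=\sum a_i[C_i]$ with arbitrary $a_i\in\Z$ you form $t:=\sum a_i\,\phi([C_i])\in N_{Y}$, check $\intf{t,t}=-4$ via~\eqref{eq:phiCiCj}, and observe $(\pi^*r+t)/2=\sum a_i[C_i\sprime]\in\SX$. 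This is precisely the argument the paper uses for the lattice-theoretic counterpart in Lemma~\ref{lem:rootsRfliftable}, transplanted to the geometric setting; it needs no effectivity, no sign normalization, and no appeal to the two geometric lemmas, so it is more elementary, while the paper's version has the virtue of treating both inclusions uniformly through Lemma~\ref{lem:liftablesplitting}.
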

\begin{proof}
Let $h_{\rho}\in \SY$ be the class of the pull-back by $\rho\colon Y\to \Ybar$
of an ample divisor  of $\Ybar$.
Since $\Phi_{\rho}$ is orthogonal to $h_{\rho}$,
the sublattices $R_{\rho}$ and $\primR_{\rho}$ of $\SY$ are orthogonal to $h_{\rho}$.
Suppose that $r\in \Roots(\primR_{\rho})$ is liftable.
Replacing $r$ by $-r$ if necessary, 
we can assume that  $\intf{r, a}>0$.
By Lemma~\ref{lem:liftablesplitting}, 
the root $r$ is the class of a splitting effective divisor $D$.
Since $\intf{r, h_{\rho}}=0$, the divisor $D$ is 
contracted by $\rho$.
Therefore  $D$ is a linear combination of $C_1, \dots, C_n$.
%each reduced irreducible component of $D$
%is among $C_1, \dots, C_n$,
Thus we have
$r\in \Roots(R_{\rho})$.
Suppose that  $r\in \Roots(R_{\rho})$.
Replacing $r$ by $-r$ if necessary, 
we can assume that  $\intf{r, a}>0$,
and hence there exists an effective divisor $D=\sum a_i C_i$
such that $r=[D]$.
Then $D$ is splitting by Lemma~\ref{lem:simplyconnected}, 
and hence   $r$ is liftable by Lemma~\ref{lem:liftablesplitting}.
\end{proof}
\subsection{A criterion for geometric realizability}
For an embedding $f\colon \Phi\inj\Lten$
of an $\ADE$-configuration $\Phi=\{r_1, \dots, r_n\}$ into $\Lten$,
we will construct a sequence in~\eqref{eq:fseq}, 
which  is a lattice-theoretic counterpart of~\eqref{eq:rhoseq}.
%constructed geometrically from an $\RDP$-Enriques surface.
For $r_i\in \Phi$,
we denote by $r_i^+\in \Phi_{f}$ the image of $r_i$ by $f$, so that %the image $\Phi_f$ of $f$ is 
$\Phi_f=\{r_1^+, \dots, r_n^+\}$.
Let $\Phi^{-}=\{r_1^-, \dots, r_n^-\}$ be a copy of $\Phi$
with a fixed isomorphism $\Phi\cong \Phi^{-}$ denoted by $r_i\mapsto r_i^-$.
We put
$$
B_{\Phi}:=\Lten (2) \oplus \gen{\Phi^{-}}(2).
$$
We denote by
$$
\varpi^*\colon \Lten\isom \Lten(2),
\quad
\varphi\colon  \gen{\Phi^{-}}\isom  \gen{\Phi^{-}}(2),
$$
the identity maps on the underlying $\Z$-modules.
Hence we have 
\begin{equation}\label{eq:connectrirj}
\intf{\varpi^*r_i^+, \varpi^* r_j^+}=
\intf{\varphi(r_i^-), \varphi( r_j^-)}=2\intf{r_i, r_j}.
\end{equation}
We define vectors $r_i\sprime$ and $r_i\spprime$ of $B_{\Phi}\dual\subset B_{\Phi}\tensor\Q$ by 
$$
r_i\sprime:=(\varpi^*r_i^+ + \varphi(r_i^-))/2,
\quad 
r_i\spprime:=(\varpi^* r_i^+ -  \varphi(r_i^-))/2.
$$
Let $M_f$ denote the submodule of $B_{\Phi}\dual$
generated by $B_{\Phi}$ and $r_1\sprime, \dots, r_n\sprime$.
Then $M_f$ is an even overlattice of $B_{\Phi}$.
Note that 
$r_i\spprime$
also belongs to $M_f$.
Let $\primM_{f}$ be an even overlattice of $M_f$.
Recall that $R_f$ is the sublattice of $\Lten$ generated by $\Phi_f$, 
and that $\primR_f$ 
is the primitive closure of $R_f$ in $\Lten$.
Then we have a sequence of inclusions 
\begin{equation}\label{eq:fseq}
\varpi^*\Phi_{f}\;  \subset\;  
\varpi^*R_f\; \subset\;  \varpi^*\primR_f\; \subset\;  \varpi^* \Lten\;  \subset\;  B_{\Phi}\;  \subset\;  M_f\;  \subset\;  \primM_{f}.
\end{equation}
Let $N(\primM_{f})$ denote the orthogonal complement of $ \varpi^* \Lten$ in $\primM_{f}$.
We put
$$
\TTT(\primM_{f}):=\set{t\in N(\primM_{f})}{\intf{t, t}=-4}.
$$
A root $r$ of $\primR_f$ is said to be \emph{$\primM_{f}$-liftable} if there exists an element $t\in \TTT(\primM_{f})$
such that $r\sprime:=(\varpi^*r+t)/2\in \primM_{f}\tensor\Q$ is contained in  $\primM_{f}$,
and when this is the case,
we say that $r\sprime$ is an \emph{$\primM_{f}$-lift} of $r$.
\begin{lemma}\label{lem:rootsRfliftable}
Every root of $R_f$ is $\primM_f$-liftable 
for any even overlattice $\primM_f$ of $M_f$.
\end{lemma}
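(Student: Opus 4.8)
The plan is to produce, for each root of $R_f$, an explicit lift that already lies in the \emph{minimal} overlattice $M_f$, so that it automatically lies in every even overlattice $\primM_f\supseteq M_f$. Thus the phrase ``for any even overlattice'' will cost nothing: the construction never consults the choice of $\primM_f$.

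First I would use that $\Phi_f=\{r_1^+, \dots, r_n^+\}$ is a $\Z$-basis of $R_f$, being an $\ADE$-basis of the root lattice $R_f=\gen{\Phi_f}$. Consequently any root $r\in\Roots(R_f)$ has a unique integral expansion $r=\sum_i a_i r_i^+$ with $a_i\in\Z$. This integrality is the decisive feature separating $R_f$ from its primitive closure $\primR_f$, and it is exactly what allows one to lift $r$ coefficient by coefficient.

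Next I would set $r^-:=\sum_i a_i r_i^-\in\gen{\Phi^{-}}$ and take as candidate difference vector $t:=\varphi(r^-)=\sum_i a_i \varphi(r_i^-)$, then verify three requirements. First, $t$ lies in the summand $\gen{\Phi^{-}}(2)$ of $B_{\Phi}$, hence in $M_f\subseteq\primM_f$, and it is orthogonal to $\varpi^*\Lten=\Lten(2)$ because $B_{\Phi}$ is an orthogonal direct sum; thus $t\in N(\primM_f)$ for \emph{every} even overlattice $\primM_f$. Second, by~\eqref{eq:connectrirj},
$$
\intf{t,t}=\sum_{i,j} a_i a_j\,\intf{\varphi(r_i^-),\varphi(r_j^-)}=2\sum_{i,j} a_i a_j\,\intf{r_i, r_j}=2\,\intf{r,r}=-4,
$$
so $t\in\TTT(\primM_f)$. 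Third, the candidate lift satisfies
$$
(\varpi^* r + t)/2=\sum_i a_i\,(\varpi^* r_i^+ + \varphi(r_i^-))/2=\sum_i a_i\, r_i\sprime,
$$
which is a $\Z$-combination of the generators $r_i\sprime$ of $M_f$ and hence lies in $M_f\subseteq\primM_f$. Combining these three points shows that $r$ is $\primM_f$-liftable, with lift $\sum_i a_i r_i\sprime$.

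I do not expect a genuine obstacle: the computation is short and essentially forced by the definitions. The only point deserving care is the universal quantifier over $\primM_f$, but since the explicit lift $\sum_i a_i r_i\sprime$ already sits in $M_f$ and $M_f\subseteq\primM_f$ by construction, the choice of $\primM_f$ cannot interfere. It is worth flagging why the same argument breaks for roots of $\primR_f$ lying outside $R_f$: there the coefficients $a_i$ need not be integers, so $\sum_i a_i r_i\sprime$ need not lie in $M_f$, and liftability becomes a real restriction rather than an automatic one, in parallel with the geometric statement of Proposition~\ref{prop:liftrho}.
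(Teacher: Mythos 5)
Your proposal is correct and is essentially identical to the paper's proof: the paper also writes $r^+=\sum a_i r_i^+$ with $a_i\in\Z$, sets $r^-:=\sum a_i r_i^-$, observes $\varphi(r^-)\in \TTT(M_f)\subset\TTT(\primM_f)$, and exhibits the lift $(\varpi^*r^+ +\varphi(r^-))/2=\sum a_i r_i\sprime\in M_f$. Your version merely spells out the norm computation and the orthogonality check that the paper leaves implicit.
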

\begin{proof}
Let $r^+$ be a root of $R_f$.
We can write $r^+$ as $\sum a_i r_i^+$ with $a_i\in \Z$.
Then, putting $r^-:=\sum a_i r_i^-\in \gen{\Phi\sp{-}}$, we have $\varphi (r^-)\in \TTT(M_f)\subset \TTT(\primM_f)$,
and hence $r^+$ has an $\primM_f$-lift $(\varpi^*r^+ +\varphi(r^-))/2=\sum a_i r_i\sprime$.
\end{proof}
\begin{definition}\label{def:conds}
We define the following conditions $\condone$-$\condfour$
on  $\primM_{f}$.
Let $\LK$ denote the $K3$ lattice,
that is, $\LK$ is an even unimodular lattice of signature $(3, 19)$,
which is unique up to isomorphism. %~(\cite{SerreBook}).
\par
\medskip
$\condone$
The lattice  $\primM_{f}$ can be embedded primitively into $\LK$.
\par
$\condtwo$
The sublattice $\varpi^*\Lten$ is primitive in $\primM_{f}$.
\par
$\condthree$
The negative-definite even lattice $N(\primM_{f})$ contains no roots.
\par
$\condfour$
The set $\shortset{r\in \Roots(\primR_f)}{\textrm{$r$ is $\primM_{f}$-liftable}}$ is equal to $\Roots(R_f)$.
\end{definition}
%
%\par
%\medskip
%The main theorem of this section is as follows.
%
\begin{definition}\label{def:strongreal}
We say that an even overlattice $\primM_f$ of $M_f$ is
\emph{strongly realized} by an $\RDP$-Enriques surface $(Y, \rho)$
if there exists an isometry 
$$
m\colon \primM_{f}\isom \primM_{\rho}
$$
with the following properties;
the isometry $m$ maps $\varpi^*\Lten$ to $\pi^{*} \SY$
isomorphically, 
and the isometry $m_Y\colon \Lten \isom \SY$ induced by $m$ 
maps $\PPP_{10}$ to $\PPP_{Y}$
and $\Phi_{f}$ to $\Phi_{\rho}$ bijectively.
An isometry $m\colon \primM_{f}\isom \primM_{\rho}$  satisfying 
these conditions is called a \emph{strong-realization isometry}.
\end{definition}
\begin{remark} 
%It is obvious that
If an even overlattice $\primM_f$ of $M_f$ is
strongly realized by an $\RDP$-Enriques surface $(Y, \rho)$,
then $f\colon \Phi\inj \Lten$ is geometrically realized by $(Y, \rho)$.
\end{remark}
\begin{theorem}\label{thm:geom1}
If an embedding $f\colon \Phi\inj\Lten$
is geometrically realized by 
an $\RDP$-Enriques surface $(Y, \rho)$,
then there exist an even overlattice $\primM_{f}$ of $M_f$ 
strongly realized by  $(Y, \rho)$.
\end{theorem}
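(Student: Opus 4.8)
The plan is to read off the lattice $\primM_{f}$ directly from the geometry of $(Y,\rho)$ and to verify the required isometry by transport of structure. Since $f$ is geometrically realized by $(Y,\rho)$, there is an isometry $\theta\colon \SY\isom\Lten$ that maps $\PPP_{Y}$ to $\Pten$ and $\Phi_{\rho}$ to $\Phi_{f}$ bijectively. Relabelling the contracted curves $C_{1},\dots,C_{n}$ if necessary, I may assume $\theta([C_{i}])=r_{i}^{+}$ for each $i$; and, using the freedom recorded in Remark~\ref{rem:phichoice}, I fix a labelling of the two components $C_{i}\sprime, C_{i}\spprime$ of $\pi\inv(C_{i})$, hence a choice of the embedding $\phi\colon \gen{\Phi_{\rho}}(2)\inj N_{Y}$ of~\eqref{eq:phidef}. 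The goal is then to construct an isometry $m_{0}\colon M_{f}\isom M_{\rho}$ respecting all the relevant structures, and afterwards to define $\primM_{f}$ as the pull-back of $\primM_{\rho}$ under $m_{0}$.

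First I would build an isometry $B_{\Phi}\isom B_{\rho}$. Recall $B_{\Phi}=\varpi^{*}\Lten\oplus\varphi(\gen{\Phi^{-}})$ is an orthogonal direct sum by definition, and $B_{\rho}=\pi^{*}\SY\oplus\Image\phi$ is one as well, because $\Image\phi\subset N_{Y}$ and $N_{Y}$ is the orthogonal complement of $\pi^{*}\SY$ in $\SX$. On the first summand set $\varpi^{*}v\mapsto \pi^{*}(\theta\inv(v))$; this is an isometry since both $\varpi^{*}$ and $\pi^{*}$ multiply the form by $2$ and $\theta$ is an isometry, and it lands in $\pi^{*}\SY$. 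On the second summand set $\varphi(r_{i}^{-})\mapsto \phi([C_{i}])$; this is an isometry onto $\Image\phi$ because $\intf{\varphi(r_{i}^{-}),\varphi(r_{j}^{-})}=2\intf{r_{i},r_{j}}$ by~\eqref{eq:connectrirj}, $\intf{\phi([C_{i}]),\phi([C_{j}])}=2\intf{[C_{i}],[C_{j}]}$ by~\eqref{eq:phiCiCj}, and $\intf{[C_{i}],[C_{j}]}=\intf{r_{i},r_{j}}$ since $\theta$ is an isometry and $f$ preserves the form. Combining the two gives an isometry $B_{\Phi}\isom B_{\rho}$. Extending it over $\Q$ and using $r_{i}\sprime=(\varpi^{*}r_{i}^{+}+\varphi(r_{i}^{-}))/2$ together with $[C_{i}\sprime]=(\pi^{*}[C_{i}]+\phi([C_{i}]))/2$, I find that this map sends $r_{i}\sprime$ to $[C_{i}\sprime]$. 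Since $M_{f}=B_{\Phi}+\sum_{i}\Z r_{i}\sprime$ and $M_{\rho}=B_{\rho}+\sum_{i}\Z [C_{i}\sprime]$, the map carries $M_{f}$ isometrically onto $M_{\rho}$; this is the desired $m_{0}$, and by construction it maps $\varpi^{*}\Lten$ onto $\pi^{*}\SY$ and induces $\theta\inv$ on $\Lten\isom\SY$.

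It remains to transport $\primM_{\rho}$. The lattice $\primM_{\rho}$ is a finite-index even overlattice of $M_{\rho}$ inside the even lattice $\SX$, so $M_{\rho}\subset\primM_{\rho}\subset M_{\rho}\dual$. I would extend $m_{0}$ to the isometry $M_{f}\dual\isom M_{\rho}\dual$ of dual lattices and set $\primM_{f}:=m_{0}\inv(\primM_{\rho})$. By Proposition~\ref{prop:overlattices} this is an even overlattice of $M_{f}$, corresponding to the totally isotropic subgroup of $\discf{M_{f}}$ that matches $\primM_{\rho}/M_{\rho}$ under $m_{0}$, and $m_{0}$ extends to an isometry $m\colon\primM_{f}\isom\primM_{\rho}$. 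This $m$ maps $\varpi^{*}\Lten$ to $\pi^{*}\SY$, and the induced $m_{Y}=\theta\inv\colon\Lten\isom\SY$ maps $\Pten$ to $\PPP_{Y}$ and $\Phi_{f}$ to $\Phi_{\rho}$; hence $m$ is a strong-realization isometry in the sense of Definition~\ref{def:strongreal}, and $\primM_{f}$ is strongly realized by $(Y,\rho)$.

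The real content, and the step I expect to require the most care, is the verification in the second paragraph that the abstract overlattice $M_{f}$ is carried exactly onto the geometric overlattice $M_{\rho}$, i.e.\ that the half-integral generators match as $r_{i}\sprime\mapsto[C_{i}\sprime]$. This is where the orthogonal decompositions, the two square-norm computations~\eqref{eq:phiCiCj} and~\eqref{eq:connectrirj}, and the compatible choice of $\phi$ from Remark~\ref{rem:phichoice} all come together; once $M_{f}\cong M_{\rho}$ is established compatibly with the $\varpi^{*}\Lten$ and $\Phi_{f}$ data, the overlattice $\primM_{f}$ and the isometry $m$ are forced, so no further geometric input is needed.
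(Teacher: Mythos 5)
Your proposal is correct and follows essentially the same route as the paper's own proof: both construct the isometry $B_{\Phi}\isom B_{\rho}$ summand by summand (matching $\varpi^*r_i^+$ with $\pi^*[C_i]$ and $\varphi(r_i^-)$ with $\phi([C_i])$ via~\eqref{eq:connectrirj} and~\eqref{eq:phiCiCj}), observe that the half-integral generators $r_i\sprime$ are sent to $[C_i\sprime]$ so that $M_f$ maps onto $M_{\rho}$, and then define $\primM_f$ as the even overlattice of $M_f$ corresponding to $\primM_{\rho}$ under this isometry. The only differences are cosmetic (your $\theta$ is the inverse of the paper's $m_Y$, and you spell out the isometry verifications the paper leaves implicit).
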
 
\begin{proof} 
By the assumption, 
we have an isometry
$m_Y\colon \Lten\isom \SY$
that maps $\Pten$ to $\PPP_Y$ and $\Phi_{f}$ to $\Phi_{\rho}$ bijectively.
We use the notation about $(Y, \rho)$ fixed in Section~\ref{subsec:latticesassociatedwith}, and 
 index the elements $r_1, \dots, r_n$ of $\Phi$ in such a way that
$m_Y(r_i^+)=[C_i]$ holds
for $i=1, \dots, n$.
Then $m_Y$ induces  an isometry 
$$
m^+ \; \colon\;   \varpi^*\Lten \isom  \pi^*\SY
$$
that maps $\varpi^*r_i^+$ to $[\pi^*(C_i)]$.
By~\eqref{eq:phiCiCj}, 
we also have an isometry 
$$
m^- \; \colon\;  \gen{\Phi^{-}}(2) \isom \Image (\phi\colon \gen{\Phi_{\rho}}(2)\inj N_Y)
$$
that maps $ \varphi(r_i^-)$ to $\phi ([C_i])=[C_i\sprime]-[C_i\spprime]$.
Thus we have an isometry
$$
m:=m^+\oplus m^- \; \colon\; B_{\Phi} \isom B_{\rho}.
$$
Then $m\tensor\Q$ maps $r_i\sprime$ to $[C_i\sprime]$ and $r_i\spprime$ to $[C_i\spprime]$.
Therefore we obtain an isometry
$m\colon M_f\isom M_{\rho}$.
Let $\primM_{f}$ be the even overlattice of $M_f$ corresponding to 
the even overlattice $\primM_{\rho}$ of $M_{\rho}$ via $m$,
so that $m\colon M_f\isom M_{\rho}$ extends to
$m\colon \primM_{f}\isom \primM_{\rho}$.
Then  $m$ induces an isomorphism
from the sequence~\eqref{eq:fseq} to the sequence~\eqref{eq:rhoseq}.
In particular, $m$ is a strong-realization isometry.
\end{proof}
\begin{theorem}\label{thm:geom2}
If an even overlattice $\primM_{f}$ of $M_f$
is strongly realized by an $\RDP$-Enriques surface $(Y, \rho)$,
then $\primM_{f}$
satisfies  $\condone$-$\condfour$.
\end{theorem}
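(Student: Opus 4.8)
The plan is to transport all four conditions across the strong-realization isometry $m\colon \primM_f\isom\primM_\rho$ supplied by the hypothesis, and to verify them for $\primM_\rho$ using its concrete realization inside $\SX$. Exactly as in the proof of Theorem~\ref{thm:geom1}, the induced isometry $m_Y\colon\Lten\isom\SY$ carries $\Phi_f$ bijectively to $\Phi_\rho$, hence $R_f$ to $R_\rho$ and $\primR_f$ to $\primR_\rho$; consequently $m$ identifies the sequence~\eqref{eq:fseq} with~\eqref{eq:rhoseq}, sends $\varpi^*\Lten$ to $\pi^*\SY$, and maps $N(\primM_f)$ isometrically onto the orthogonal complement of $\pi^*\SY$ in $\primM_\rho$, which is $\primM_\rho\cap N_Y$. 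Since each of $\condone$--$\condfour$ is invariant under isometry, it suffices to check the corresponding assertion on the $\primM_\rho$ side.

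For $\condone$, I would use that $\primM_\rho$ is by definition primitive in $\SX$, and that $\SX$ is primitive in $H^2(X,\Z)\cong\LK$ for any $K3$ surface $X$. A composite of primitive embeddings is primitive, so $\primM_\rho$, and therefore $\primM_f$ through $m$, embeds primitively into $\LK$. For $\condtwo$, recall that $\pi^*\SY=\SX^+$ is the invariant sublattice of the Enriques involution and is primitive in $\SX$; since $\pi^*\SY\subset\primM_\rho\subset\SX$, primitivity of $\pi^*\SY$ in $\SX$ forces primitivity of $\pi^*\SY$ in $\primM_\rho$, and hence of $\varpi^*\Lten$ in $\primM_f$. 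For $\condthree$, the lattice $N(\primM_f)$ corresponds to $\primM_\rho\cap N_Y\subset N_Y$; as $N_Y$ contains no roots by Lemma~\ref{lem:Npi}, neither does $N(\primM_f)$.

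The condition $\condfour$ is where the geometry genuinely enters, and I expect it to be the main obstacle. The inclusion $\Roots(R_f)\subset\{\text{$\primM_f$-liftable roots of $\primR_f$}\}$ is immediate from Lemma~\ref{lem:rootsRfliftable} and requires no realization. For the reverse inclusion I would take an $\primM_f$-liftable root $r\in\Roots(\primR_f)$ and set $\bar r:=m_Y(r)\in\Roots(\primR_\rho)$. An $\primM_f$-lift $(\varpi^*r+t)/2\in\primM_f$ is carried by $m$ to $(\pi^*\bar r+m(t))/2\in\primM_\rho\subset\SX$, with $m(t)\in\primM_\rho\cap N_Y$ of square-norm $-4$. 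The key observation is that $\primM_\rho\cap N_Y\subset N_Y$, so $m(t)\in\TTT_Y$ and $\bar r$ is \emph{geometrically} liftable in the sense of Section~\ref{subsec:geomenr}. Proposition~\ref{prop:liftrho} then gives $\bar r\in\Roots(R_\rho)$, and applying $m^{-1}$ returns $r\in\Roots(R_f)$; combining the two inclusions yields $\condfour$. The delicate point to handle with care is precisely this passage from the lattice-intrinsic notion of $\primM_f$-liftability, where both the difference vector and the lift are required to lie in $\primM_f$, to the a priori weaker geometric liftability, where they are only constrained to lie in $N_Y$ and $\SX$: the containment $\primM_\rho\cap N_Y\subset N_Y$ makes $\primM_\rho$-liftability imply geometric liftability, after which Proposition~\ref{prop:liftrho} closes the loop.
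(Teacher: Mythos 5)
Your proof is correct and takes essentially the same route as the paper's: conditions $\condone$--$\condthree$ are transported along $m$ exactly as in the paper (primitivity of $\primM_{\rho}$ in $\SX$ and of $\SX$ in $\LK$, primitivity of $\pi^*\SY$, and root-freeness of $N_Y$ via Lemma~\ref{lem:Npi}), and for $\condfour$ you use the same two key ingredients --- that $m$ carries $\TTT(\primM_{f})$ into $\TTT_{Y}$, so $\primM_{f}$-liftability implies geometric liftability, combined with Proposition~\ref{prop:liftrho} and Lemma~\ref{lem:rootsRfliftable}. The only difference is presentational: you chase elements explicitly where the paper packages the argument in a commutative diagram.
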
 
\begin{proof}
Let $m\colon \primM_{f}\isom \primM_{\rho}$
be a strong-realization isometry.
Since $\primM_{\rho}$ is a primitive sublattice of the primitive sublattice $\SX$
of the $K3$-lattice $H^2(X, \Z)\cong \LK$, 
the lattice $\primM_{f}\cong \primM_{\rho}$ satisfies $\condone$.
Since $ \pi^*\SY$ is primitive in $\SX$ and hence in $\primM_{\rho}$,
the lattice $\primM_{f}$ satisfies $\condtwo$.
%The action of the Enriques involution $\enrinvol$ on $\SX$ preserves $\primM_{\rho}$,
%and it is transplanted to the involution $\iota$ on $\primM_{f}$ via $m$.
%Hence $\primM_{f}$ satisfies $\condthree$.
Since the  isometry $m$ maps the lattice $N(\primM_{f})$ isomorphically  to a sublattice of $N_{Y}$,
Lemma~\ref{lem:Npi} implies that $\primM_{f}$ satisfies $\condthree$.
If $r\in \Roots(\primR_f)$ is 
$\primM_{f}$-liftable, then 
$m_Y(r)\in  \Roots(\primR_{\rho})$ is liftable,
because $m$ maps $\TTT(\primM_{f})$ to a subset of  $\TTT_{Y}$.
Hence the isometry $m_Y\colon \Lten\isom \SY$ induced by  $m$ induces the horizontal injection in the commutative diagram below:
$$
\renewcommand{\arraystretch}{1.4}
\begin{array}{ccc}
\shortset{r\in \Roots(\primR_f)}{\textrm{$r$ is $\primM_f$-liftable}} & \inj &\shortset{r\in \Roots(\primR_{\rho})}{\textrm{$r$ is liftable}} \\
\llap{\scriptsize by Lemma~\ref{lem:rootsRfliftable}\;\;} \raise -3pt \hbox{\rotatebox{90}{$\inj$}} 
&&  
\raise -2pt \hbox{\rotatebox{90}{$\cong$}} \rlap{\scriptsize \;\;\;by Proposition~\ref{prop:liftrho}} \\
\Roots(R_f) &\maprightspsb{\sim}{\textrm{by $m_Y$}} &\Roots(R_{\rho}).
\end{array}
$$
By Proposition~\ref{prop:liftrho} and 
Lemma~\ref{lem:rootsRfliftable},
we see that the upward injection in the left column of the diagram above is a bijection.
Hence  $\primM_{f}$ satisfies $\condfour$.
\end{proof}
\begin{theorem}\label{thm:geom3}
Let  $\primM_f$ be an even overlattice of $M_f$
satisfying  $\condone$-$\condfour$.
Then there exists 
an $\RDP$-Enriques surface $(Y, \rho)$
that strongly realizes 
 $\primM_f$. 
\end{theorem}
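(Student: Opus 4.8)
The plan is to reverse the construction used in the proof of Theorem~\ref{thm:geom1}: starting from the abstract lattice $\primM_f$ I will build a $K3$ surface $X$ carrying an Enriques involution $\enrinvol$ whose invariant lattice is $\varpi^*\Lten$ and whose Néron--Severi lattice is $\primM_f$, and then exhibit $\Phi_f$ as a contractible configuration of smooth rational curves on $Y=X/\gen{\enrinvol}$. I work with the involution $\enrinvol$ of $\primM_f$ that extends the one on $M_f$ (interchanging $r_i\sprime$ and $r_i\spprime$ and acting as $\pm1$ on $\varpi^*\Lten$ and on $N(\primM_f)$). First I would fix a primitive embedding $\primM_f\inj\LK$ provided by $\condone$. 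By $\condtwo$ the sublattice $\varpi^*\Lten\cong\Lten(2)$ is then primitive in $\LK$, and since $\Lten(2)$ is $2$-elementary its orthogonal complement in $\LK$ has signature $(2,10)$ and discriminant form $-\discf{\Lten(2)}$; by Nikulin's theory of $2$-elementary lattices (Proposition~\ref{prop:genus} and Remark~\ref{rem:genus}) this complement is isometric to $U\oplus U(2)\oplus E_8(2)$. Declaring $\enrinvol$ to act as $+1$ on $\varpi^*\Lten$ and as $-1$ on its orthogonal complement gives a well-defined involution of $\LK$, since both summands are $2$-elementary so the gluing is preserved; by construction it restricts to $\enrinvol$ on $\primM_f$.

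Next I would realize this data geometrically. Let $T$ be the orthogonal complement of $N(\primM_f)$ inside the anti-invariant lattice $U\oplus U(2)\oplus E_8(2)$; as $N(\primM_f)$ is negative definite, $T$ has signature $(2,\ast)$. Choosing a sufficiently general period $\omega\in T\tensor\C$ and invoking the surjectivity of the period map, I obtain a $K3$ surface $X$ with $\SX=\primM_f$. The isometry $\enrinvol$ of $\LK$ sends $\omega$ to $-\omega$ and fixes a positive vector in $\varpi^*\Lten\tensor\R$, so after selecting an $\enrinvol$-invariant ample chamber having the mutually orthogonal pairs of roots $r_i\sprime,r_i\spprime$ among its walls, the Torelli theorem produces an automorphism of $X$ inducing $\enrinvol$. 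By $\condtwo$ the invariant lattice $\SX^+=\varpi^*\Lten$ has rank $10$ and $\discg{\SX^+}\cong\F_2^{10}$, so Proposition~\ref{prop:enrinvolNikulin} shows $\enrinvol$ is fixed-point free, i.e.\ an Enriques involution (with $\condthree$ guaranteeing consistency via Lemma~\ref{lem:Npi}, namely that no $(-2)$-curve is contracted by $\pi$). Then $Y=X/\gen{\enrinvol}$ is an Enriques surface, $\pi\colon X\to Y$ its universal cover, $\pi^*\SY=\varpi^*\Lten$ and $N_Y=N(\primM_f)$; fixing an isometry $\SY\isom\Lten$ carrying $\PPP_{Y}$ to $\Pten$, geometric liftability of roots of $\SY$ coincides with $\primM_f$-liftability and $\TTT_Y=\TTT(\primM_f)$.

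Finally I would produce the contraction. In the chosen ample chamber each $r_i\sprime$ and $r_i\spprime$ is the class of a smooth rational curve on $X$; since $\enrinvol$ interchanges the two and they are orthogonal, they descend to a single smooth rational curve $C_i$ on $Y$ with $\pi^*[C_i]=\varpi^*r_i^+$, so $[C_i]=r_i^+$ under $\SY\isom\Lten$. Here $\condthree$ and $\condfour$ do the decisive work: by $\condfour$ together with Lemma~\ref{lem:rootsRfliftable} the only liftable roots in $\primR_f$ are those of $R_f$, hence the only smooth rational curves with class in $\primR_f$ are the $C_i$, their intersection matrix is the negative-definite $\ADE$ matrix of $\Phi_f$, and no spurious curve can make some $C_i$ reducible or enlarge the contracted locus. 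Contracting $C_1,\dots,C_n$ by the contractibility of negative-definite configurations yields an $\RDP$-Enriques surface $(Y,\rho)$ with $\Phi_\rho=\Phi_f$. By Lemma~\ref{lem:twolifts} the two lifts $r_i\sprime,r_i\spprime$ are exactly the members of $\Phi^{\sim}_\rho$, so $M_\rho=M_f$; since $M_f$ and $\primM_f$ have the same rank and $\SX=\primM_f$, the primitive closure $\primM_\rho$ of $M_\rho$ in $\SX$ equals $\primM_f$, and the identity is the desired strong-realization isometry. The hard part is exactly this last arrangement: choosing the period and ample chamber of $X$ so that $\Phi^{\sim}_\rho=\{r_i\sprime,r_i\spprime\}$ is precisely the set of contracted $(-2)$-curves, with no extraneous smooth rational curve in $\primR_f$. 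It is there, and not in the lattice-theoretic extension of the involution, that $\condthree$ and $\condfour$ are indispensable.
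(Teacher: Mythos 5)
Your construction of the Enriques involution is, in substance, the paper's own: $\condone$ plus surjectivity of the period map gives a $K3$ surface with $\SX\cong\primM_f$; $\condtwo$ makes $\varpi^*\Lten\cong\Lten(2)$ primitive in the $K3$ lattice; the isometry acting as $+1$ on $\varpi^*\Lten$ and as $-1$ on its orthogonal complement is integral because the glue group is $2$-elementary; and Torelli together with Proposition~\ref{prop:enrinvolNikulin} produces $\enrinvol$. (The paper fixes an invariant ample class first and defines the involution afterwards, which is the cleaner order, but that is bookkeeping.) The divergence, and the gap, is in the final step, where you contract prescribed curve classes instead of constructing a nef class.

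You assert that one can ``select an $\enrinvol$-invariant ample chamber having the pairs $r_i\sprime,r_i\spprime$ among its walls,'' and everything downstream (irreducibility of the $C_i$, $\Phi_{\rho}=\Phi_f$, $M_{\rho}=M_f$) rests on this selection. But $2n$ roots spanning a rank-$2n$ sublattice of $\SX$ can simultaneously define walls of a single chamber only if they form a fundamental system of the set of roots of $\SX$ lying in their $\Q$-span; equivalently, only if the primitive closure of $\gen{\{r_i\sprime,r_i\spprime\}}$ in $\SX$ contains no roots beyond those of $\gen{\{r_i\sprime,r_i\spprime\}}$ itself. (Compare four pairwise orthogonal roots generating an index-$2$ sublattice of the root lattice of type $D_4$: no Weyl chamber has all four as walls.) Proving this non-existence of spurious roots is the real content, and it needs a bridge you never build: every root $s$ of $\primM_f$ lying in the $\Q$-span of the $r_i\sprime,r_i\spprime$ decomposes, using $\condtwo$ and $\condthree$, as $s=(\varpi^*r+t)/2$ with $t\in\TTT(\primM_f)$ and $r$ an $\primM_f$-liftable root of $\primR_f$; then $\condfour$ forces $r\in\Roots(R_f)$, and the two-lift uniqueness argument of Lemma~\ref{lem:twolifts} (which again uses $\condthree$) puts $s$ back inside $\gen{\{r_i\sprime,r_i\spprime\}}$. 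The facts you actually cite---$\condfour$ and Lemma~\ref{lem:rootsRfliftable}---are statements about roots of $\primR_f$ inside $\Lten$, not about roots of $\SX$ in the span of the lifts, so as written your decisive claim is circular: you need the $C_i$ to be irreducible to conclude $\Phi_{\rho}=\Phi_f$, and you need the absence of extra roots (hence of extra contracted curves) to know the required chamber exists at all. The paper sidesteps this entirely: it never prescribes which classes are curves, but takes a general invariant $\eta\in\Lten$ orthogonal to $\Phi_f$, shows the corresponding class $h$ on $Y$ is nef, and lets Proposition~\ref{prop:Phih} identify the contracted curves as the indecomposable liftable roots orthogonal to $h$, where irreducibility is indecomposability and $\condthree$, $\condfour$ enter through a purely lattice-theoretic comparison of $\ADE$-bases. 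Your route can be completed, but only after proving the chamber-existence statement above (and its $\enrinvol$-invariance), which is work of the same order as the paper's own argument.
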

\begin{proof}
%Suppose that $M_{f}$ has an even overlattice $\primM_{f}$
%that satisfies  $\condone$-$\condfour$.
%We construct an $\RDP$-Enriques surface
%$(Y, \rho)$ that strongly  realizes $\primM_{f}$.
The main tool is the Torelli theorem 
for $K3$ surfaces and the surjectivity of the period map of  $K3$ surfaces
(see~\cite[Chapter VIII]{BHPVBook}).
By~$\condone$ for $\primM_f$ and the surjectivity of the period map, 
there exists a $K3$ surface $X$ equipped with a marking
$$
\mu\colon \primM_{f}\cong \SX.
$$
%\par
Our task is to construct an Enriques involution $\enrinvol$ on $X$,
and an $\RDP$-Enriques surface $\rho\colon Y:=X/\gen{\enrinvol}\to \Ybar$
that strongly realizes $\primM_f$. 
\par
Replacing  $\mu$ by $-\mu$ if necessary,
we can assume that $\mu$ induces an embedding
$$
\mu\circ \varpi^*|_{\PPP}  \;\; \colon\;\;  \Pten \inj \PPP_X.
$$
We put $\PPP(\Phi_f):=(\Phi_f)\sperp=([\Phi_f]\sperp\tensor\R)\cap \Pten$,
which is  a positive cone of the hyperbolic sublattice $[\Phi_f]\sperp=R_f\sperp=\primR_f\sperp$ of $\Lten$.
There exists a Vinberg chamber $\Delta$ of $\Lten$ such that $\PPP(\Phi_f)\cap \Delta$ 
contains a non-empty open subset of $\PPP(\Phi_f)$.
Moving $\Delta$ by an element of the subgroup $W(\Phi_f, \Lten)\subset \OG^+(\Lten)$ generated by the reflections 
$s_r$ with respect to the roots $r\in \Phi_f$,
we can assume that $\Delta$  satisfies the following:
\begin{equation}\label{eq:PhifDelta}
\intf{r, v}\ge 0 \quad \textrm{holds for all $r\in \Phi_f$ and $v\in \Delta$}.
\end{equation}
Let $r\sprime$ be a root of $\primM_f$.
By $\condthree$  for $\primM_f$,
the locus 
$$
(r\sprime)\sperp:=\set{x\in \Pten}{\intf{\varpi^* x, r\sprime}=0}
$$ 
is a hyperplane of $\Pten$.
Note that 
the family $\shortset{(r\sprime)\sperp}{r\sprime\in \Roots(\primM_f)}$ of hyperplanes of $\PPP_{10}$ 
is  locally finite.
Let $\eta\sprime$ be a general point of $\PPP(\Phi_f)\cap \Delta\cap (\Lten\tensor\Q)$, 
let $\UUU$ be a sufficiently small neighborhood of $\eta\sprime$ in $\Pten$,
and let $\alpha\sprime$ be a general point of $\UUU\cap \Delta\cap (\Lten\tensor\Q)$.
We have  a positive integer $c$ such that $\alpha:=c\alpha\sprime\in \Lten$ and $\eta:=c\eta\sprime\in \Lten$.
%Since $\FFF$ is locally finite, $\eta$ is chosen generally from $\PPP(\Phi_f)\cap \Delta$,  and $\UUU$ is sufficiently small,
Then we have the following:
\begin{enumerate}[(i)]
\item  there exist no roots $r\sprime$ in $\primM_f$ such that $\varpi^*\alpha\in (r\sprime)\sperp$,
\item there exist no roots $r\sprime$ in $\primM_f$ such that $\intf{\varpi^*\alpha, r\sprime}>0$ and $\intf{\varpi^*\eta, r\sprime}<0$, and
\item the set of  $r\in \Roots(\Lten)$ satisfying $\intf{\eta, r}=0$ is equal to $\Roots(\primR_f)$.
\end{enumerate}
%
%there exist no roots $r$ in $\primM_f$ such that $\alpha\in (r)\sperp$,
%and there exist no roots $r$ in $\primM_f$ such that $\intf{\alpha, r}>0$ and $\intf{\eta, r}<0$.
We then put
$$
a_X:=\mu\circ \varpi^*|_{\PPP} (\alpha)\in \PPP_X\cap \SX,
\quad
h_X:=\mu\circ \varpi^*|_{\PPP} (\eta)\in \PPP_X\cap \SX.
$$
By (i), we 
compose the marking $\mu$ by an element of $W(\SX)$
and assume that
$a_X$ is ample.
Then (ii) implies that
 $h_X$ is nef.
\par
By $\condtwo$, the sublattice  $\mu(\varpi^* \Lten)$ of $\SX$ is primitive in $\SX$
and hence is primitive in the even unimodular lattice  $H^2(X, \Z)$.
Let $K$ denote the orthogonal complement  of $\mu(\varpi^* \Lten)$ in $H^2(X, \Z)$.
Then we have an isomorphism
$$
\discg{K}\cong \discg{\mu(\varpi^* \Lten)} \cong \discg{\Lten(2)}\cong \F_2^{10}
$$
of discriminant groups by~\cite{Nikulin1979}. 
Since $\discg{K}$ is $2$-elementary,
the scalar multiplication by $-1$ on $K$ acts on $\discg{K}$ trivially.
By~\cite{Nikulin1979} again,
we obtain an isometry $\enrinvol\sprime$ of $H^2(X, \Z)$
 that  preserves each of 
 $\mu(\varpi^* \Lten)$ and $ K$, induces the identity on $\mu(\varpi^* \Lten)$,
 and induces the scalar multiplication by $-1$  on $K$.
 Since $\enrinvol\sprime$ acts on $H^0(X, \Omega_X^2)$ as the scalar multiplication by $-1$, 
 it preserves the Hodge structure of $H^2(X, \Z)$.
In particular,
the action of  $\enrinvol\sprime$
preserves $\SX$.
 Since $\enrinvol\sprime$ fixes the ample class $a_X$,
 the action of  $\enrinvol\sprime$ preserves the nef cone of $X$.
Hence the Torelli theorem implies that $\enrinvol\sprime$ comes from an involution $\enrinvol\colon X\to X$.
 By Proposition~\ref{prop:enrinvolNikulin}, we see that $\enrinvol$ is an Enriques involution.
 Let $\pi\colon X\to Y:=X/\gen{\enrinvol}$ be the quotient morphism.
 Since $\mu(\varpi^* \Lten)$ is the invariant sublattice of $\enrinvol\sprime$
 in $H^2(X, \Z)$,
we have $\mu(\varpi^* \Lten)= \pi^*\SY$.
 Therefore $\mu$ induces an isometry
 $$
 \mu_Y\colon \Lten\isom \SY.
 $$
  \par
 %Our next task is to construct an $\RDP$-Enriques surface $\rho\colon Y\to \Ybar$ 
%such that  $\mu_Y(\Phi_f)=\Phi_{\rho}$.
 Note that  $a:=\mu_Y(\alpha)$
 is ample on $Y$ and $h:=\mu_Y(\eta)$ is nef on $Y$, because $\pi^*a=a_X$ and $\pi^*(h)=h_X$.
 Let $H$ be the nef divisor of $Y$ whose class is $h$.
 We construct a birational morphism $\rho_h\colon Y\to \Ybar$   by $H$
 as in Section~\ref{subsec:geomenr}.
It remains to show that $\mu_Y$ maps $\Phi_f$ to $\Phi_h:=\Phi_{\rho_h}$ bijectively.
Recall the definition~\eqref{eq:PihLh} of $\Pi^+_h$.
The property (iii) of $\eta$ above implies that
$\mu_Y$ maps $\Roots(\primR_f)$ to $\Pi^+_h\cup (-1)\Pi^+_h$ bijectively.
Hence $\mu_Y$ identifies $\primR_f$ and the sublattice $\gen{\Pi^+_h}$ of $\SY$ generated by $\Pi^+_h$.
Since $\mu$ is  an isometry from $\primM_f$ to $\SX$,
the map $\mu$ induces a bijection from $\TTT(\primM_f)$ to $\TTT_Y$.
Therefore
the isometry $\mu_Y$ induces a bijection from  the set of   $\primM_{f}$-liftable roots of $\primR_f$
to the set of  liftable root  of $\gen{\Pi^+_h}$.
Recall that $\alpha$ is in the interior of the Vinberg chamber $\Delta$.
By~$\condfour$ for $\primM_f$ and~\eqref{eq:PhifDelta},
the $\ADE$-basis of   the root sublattice 
generated by $\shortset{r\in \Roots(\primR_f)}{\textrm{$r$ is $\primM_{f}$-liftable}}$
associated with the linear form $\intf{-, \alpha}$ is $\Phi_f$.
On the other hand, 
Proposition~\ref{prop:Phih} implies that
the $\ADE$-basis of  the root sublattice 
generated by $\shortset{r\in \Roots(\gen{\Pi^+_h})}{\textrm{$r$ is liftable}}$
associated with $\intf{-, a}$
is $\Phi_{h}$.
Therefore $\mu_Y$  maps $\Phi_f$
to $\Phi_{h}$  bijectively.
%Therefore $f\colon \Phi\inj \Lten$ is geometrically realized by $(Y, \rho)$.
\end{proof}
\begin{corollary}\label{cor:geom}
An embedding $f\colon \Phi\inj\Lten$
is geometrically realizable 
if and only if there exists an even overlattice $\primM_{f}$
of $M_f$ satisfying the conditions $\condone$-$\condfour$.
\qed
\end{corollary}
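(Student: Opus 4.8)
The plan is to derive the corollary directly from Theorems~\ref{thm:geom1},~\ref{thm:geom2}, and~\ref{thm:geom3}, which together furnish exactly the chain of implications needed for both directions of the equivalence. Since these three theorems carry out all the substantive work, the proof of the corollary will be a short logical assembly rather than a fresh construction.

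For the forward direction I would begin by assuming that $f\colon \Phi\inj\Lten$ is geometrically realizable, so that $f$ is geometrically realized by some $\RDP$-Enriques surface $(Y, \rho)$. Theorem~\ref{thm:geom1} then produces an even overlattice $\primM_f$ of $M_f$ that is strongly realized by this \emph{same} pair $(Y, \rho)$. Feeding this particular $\primM_f$ into Theorem~\ref{thm:geom2} immediately yields that $\primM_f$ satisfies $\condone$-$\condfour$, which is precisely the existence statement required.

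For the converse I would assume that some even overlattice $\primM_f$ of $M_f$ satisfies $\condone$-$\condfour$. Theorem~\ref{thm:geom3} then supplies an $\RDP$-Enriques surface $(Y, \rho)$ that strongly realizes $\primM_f$. Invoking the observation recorded in the remark following Definition~\ref{def:strongreal}, namely that strong realization entails geometric realization, I conclude that $f$ is geometrically realized by $(Y, \rho)$, and hence geometrically realizable.

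Since the heavy lifting lives entirely in the three cited theorems, there is no genuine obstacle inside the corollary itself; the only point demanding attention is to ensure that the \emph{same} overlattice $\primM_f$ threads through both Theorem~\ref{thm:geom1} and Theorem~\ref{thm:geom2} in the forward direction, rather than producing two unrelated overlattices. The substantive difficulty, were one to locate it, sits upstream in Theorem~\ref{thm:geom3}, whose proof leans on the surjectivity of the period map together with the Torelli theorem to build the $K3$ surface, the Enriques involution, and the contraction $\rho_h$ simultaneously and compatibly.
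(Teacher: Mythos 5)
Your proof is correct and is exactly the argument the paper intends: the corollary is stated as an immediate consequence of Theorems~\ref{thm:geom1},~\ref{thm:geom2}, and~\ref{thm:geom3} together with the remark that strong realization implies geometric realization, which is precisely the chain you assemble (including the point that the same overlattice passes from Theorem~\ref{thm:geom1} into Theorem~\ref{thm:geom2}).
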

\begin{corollary}\label{cor:Usprime}
There exists a bijection between 
the set of strong equivalence classes of $\RDP$-Enriques surfaces
geometrically realizing $f\colon \Phi\inj\Lten$
and the set of orbits of the action of the group
$$
U(M_f):=\set{g\in \OG^+(M_f)}{{\varpi^* \Phi_f}^g=\varpi^*\Phi_f,\;\; {\varpi^*\Lten}^g=\varpi^*\Lten}.
$$
on the set of even overlattices of $M_f$ satisfying $\condone$-$\condfour$.
\qed
\end{corollary}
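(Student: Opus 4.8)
The plan is to convert Theorems~\ref{thm:geom1}--\ref{thm:geom3} into the asserted bijection. First I would record that $U(M_f)$ genuinely acts on the set of even overlattices of $M_f$ satisfying $\condone$--$\condfour$: any $g\in U(M_f)$ extends to $M_f\tensor\Q$ and permutes the overlattices of $M_f$ through its action on $\discg{M_f}$, and since it fixes $\varpi^*\Lten$ and $\varpi^*\Phi_f$ setwise --- hence also $\varpi^*R_f$, $\varpi^*\primR_f$, the orthogonal complement $N(\primM_f)$, and the sets $\TTT(\primM_f)$ --- each of the four conditions is transported along $g$, so the action is well defined. Next I would define a map $\Theta$ from these overlattices to strong equivalence classes by sending $\primM_f$ to the class of any $(Y,\rho)$ strongly realizing it, which exists by Theorem~\ref{thm:geom3} and geometrically realizes $f$. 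To see that $\Theta$ is well defined, if $(Y,\rho)$ and $(Y\sprime,\rho\sprime)$ both strongly realize the same $\primM_f$ via strong-realization isometries $m,m\sprime$, then $\mu:=m\sprime\circ m^{-1}\colon\primM_{\rho}\isom\primM_{\rho\sprime}$ is a strong-equivalence isometry in the sense of Definition~\ref{def:stronglyequivalent}, since its required cone- and $\Phi$-conditions are read off directly from those of $m$ and $m\sprime$ in Definition~\ref{def:strongreal}.

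The surjectivity of $\Theta$ is then immediate: given a strong equivalence class geometrically realizing $f$, choose a representative $(Y,\rho)$; Theorem~\ref{thm:geom1} produces an even overlattice $\primM_f$ of $M_f$ strongly realized by $(Y,\rho)$, and Theorem~\ref{thm:geom2} guarantees that it satisfies $\condone$--$\condfour$, whence $\Theta(\primM_f)$ is the given class.

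It remains to show that $\Theta(\primM_f)=\Theta(\primM_f\sprime)$ holds if and only if $\primM_f$ and $\primM_f\sprime$ lie in a single $U(M_f)$-orbit; this produces the induced bijection on orbits. For the forward implication I would take strongly equivalent representatives $(Y,\rho)$ and $(Y\sprime,\rho\sprime)$ realizing $\primM_f$ and $\primM_f\sprime$ via $m,m\sprime$, together with a strong-equivalence isometry $\mu\colon\primM_{\rho}\isom\primM_{\rho\sprime}$, and set $g:=(m\sprime)^{-1}\circ\mu\circ m$. The crucial input here is Lemma~\ref{lem:QYrho}: since $\mu$ carries $\Phi^{\sim}_{\rho}$ to $\Phi^{\sim}_{\rho\sprime}$ and $\pi^*\SY$ to $\pi^{\prime*}S_{Y\sprime}$, it restricts to an isometry $M_{\rho}\isom M_{\rho\sprime}$, so $g$ restricts to an element of $\OG(M_f)$; chasing $\varpi^*\Lten$ and $\varpi^*\Phi_f$ through the chain $M_f\to M_{\rho}\to M_{\rho\sprime}\to M_f$ shows $g\in U(M_f)$ and $\primM_f^{\,g}=\primM_f\sprime$. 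For the converse, given $g\in U(M_f)$ with $\primM_f^{\,g}=\primM_f\sprime$ and a realization $m$ of $\primM_f$ by $(Y,\rho)$, I would check that $m\circ g^{-1}$ is a strong-realization isometry of $\primM_f\sprime$ by the \emph{same} $(Y,\rho)$, whence $\Theta(\primM_f)=\Theta(\primM_f\sprime)$.

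The step I expect to be the main obstacle is the positive-cone bookkeeping underlying this converse. A strong-equivalence isometry is required by Definition~\ref{def:stronglyequivalent} to send $\PPP_{Y}$ to $\PPP_{Y\sprime}$, so the $g$ produced in the forward direction automatically induces an orientation-preserving isometry $g_Y\in\OG^+(\Lten)$ of $\varpi^*\Lten\cong\Lten$; but $U(M_f)$ is defined purely by its action on $\varpi^*\Lten$ and $\varpi^*\Phi_f$, which a priori permits orientation-reversing elements. Concretely, $M_f$ is itself hyperbolic of signature $(1,9+n)$, its positive cone is detected by $\varpi^*\Lten$, and $g$ preserves it exactly when $g_Y\in\OG^+(\Lten)$; only then does $m\circ g^{-1}$ satisfy the cone condition $\Pten\to\PPP_{Y}$ demanded of a strong-realization isometry. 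The delicate point is therefore to reconcile these orientations: I would analyse the action of $g$ on the fixed ample vector $\varpi^*a$ of Section~\ref{subsec:latticesassociatedwith} relative to the cone of $M_f$, showing either that an isometry permuting $\varpi^*\Phi_f$ must respect the side of $\Pten$ singled out by $a$, or else that any residual orientation ambiguity is absorbed by the freedom to replace the marking $\mu$ by $-\mu$ exploited in the proof of Theorem~\ref{thm:geom3}. This cone-compatibility is the one genuinely non-formal ingredient; granting it, $\Theta$ descends to the desired bijection from $U(M_f)$-orbits of qualifying overlattices to strong equivalence classes, while the remainder of the argument is a formal transport of structure along the strong-realization isometries.
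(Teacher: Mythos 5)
Your overall architecture---deriving the bijection formally from Theorems~\ref{thm:geom1}, \ref{thm:geom2}, \ref{thm:geom3} together with Lemma~\ref{lem:QYrho}, with the well-definedness of your map $\Theta$, its surjectivity, and the forward implication all handled by transport of structure---is exactly the (unwritten) argument the paper intends, and those parts are correct. But the step you flag and then ``grant'' is a genuine gap, and the first of your two proposed resolutions is in fact false: an isometry of $M_f$ preserving $\varpi^*\Lten$ and $\varpi^*\Phi_f$ need \emph{not} preserve $\Pten$. Indeed, let $w_0$ be the longest element of $W(\Phi_f,\Lten)$, so that $\Phi_f^{w_0}=-\Phi_f$, say $(r_i^+)^{w_0}=-r^+_{\sigma(i)}$ with $\sigma$ the opposition permutation. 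The isometry of $B_{\Phi}$ acting as $w_0\circ(-\id)$ on $\Lten(2)$ and as $\varphi(r_i^-)\mapsto\varphi(r^-_{\sigma(i)})$ on $\gen{\Phi^-}(2)$ sends $r_i\sprime$ to $r_{\sigma(i)}\sprime$, hence preserves $M_f$ and lies in $U(M_f)$, yet its restriction to $\Lten$ interchanges the two positive cones (for $\Phi=A_1$ this is just $(s_{r_1}\circ(-\id))\oplus\id$). So $U(M_f)\not\subset\OG^+(M_f)$ in general, and for such $g$ your isometry $m\circ g\inv$ really does violate the cone condition of Definition~\ref{def:strongreal}; note that this same point is silently assumed in the paper's proof of Proposition~\ref{prop:U}, where the restriction map $\xi$ is declared to take values in $\OG^+(\Lten)$.

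The corollary is nevertheless true as stated, and the missing ingredient is close to your second suggestion, except that the sign flip alone does not suffice (composing with $-\id$ restores the cone but sends $\Phi_f$ to $-\Phi_f$); one must correct the sign flip by a Weyl-group element \emph{lifted to $M_f$}. Fix a reduced word $w_0=s_{r_{i_1}^+}\cdots s_{r_{i_k}^+}$ and put $\widetilde{w}_0:=(s_{r_{i_1}\sprime}s_{r_{i_1}\spprime})\cdots(s_{r_{i_k}\sprime}s_{r_{i_k}\spprime})\in\OG(M_f)$. A direct computation using $\intf{r_i\sprime,r_i\spprime}=0$ shows that each factor $s_{r_i\sprime}s_{r_i\spprime}$ preserves $\varpi^*\Lten$ and restricts on it to $s_{r_i^+}$, so $\tilde{\nu}:=(-\id_{M_f})\circ\widetilde{w}_0$ lies in $U(M_f)$, its restriction to $\Lten$ being $(-\id)\circ w_0$, which maps $\Phi_f$ onto $\Phi_f$ and reverses $\Pten$. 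Crucially, $\tilde{\nu}$ maps \emph{every} even overlattice $\primM\subset M_f\dual$ of $M_f$ onto itself: $-\id$ fixes every subgroup of $M_f\dual$, and for a root $r\in\Roots(M_f)$ and $x\in\primM$ one has $\intf{x,r}\in\Z$ and $r\in M_f\subset\primM$, so $s_r(\primM)=\primM$. Consequently any cone-reversing $h\in U(M_f)$ satisfies $\primM_f^{\,h}=\primM_f^{\,h\tilde{\nu}\inv}$ with $h\tilde{\nu}\inv\in U(M_f)\cap\OG^+(M_f)$; that is, the $U(M_f)$-orbits on the qualifying overlattices coincide with the orbits of the cone-preserving subgroup, for which your converse argument is correct, and the bijection follows. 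Geometrically, the same trick says that $(-\id_{\primM_{\rho}})$ composed with the product of the reflections $s_{[C_i\sprime]}s_{[C_i\spprime]}$ along the word for $w_0$ is an isometry of $\primM_{\rho}$ preserving $\pi^*\SY$ and $\Phi_{\rho}$ while reversing $\PPP_Y$, which is exactly the symmetry needed to strongly realize $\primM_f^{\,h}$ by the same $(Y,\rho)$.
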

\begin{remark}\label{rem:sodoesMf}
Let $\primM_f$ and $\primM_f\sprime$
be even overlattices of $M_f$ such that $\primM_f\subset \primM_f\sprime$.
If $\primM_f\sprime$ satisfies $\condtwo$, $\condthree$, and  $\condfour$,
then so does $\primM_f$.
In particular,
if an even overlattice $\primM_f$ satisfies $\condtwo$, $\condthree$, and  $\condfour$,
then so does $M_f$.
\end{remark}
%
%\subsection{Strong equivalence classes}\label{subsec:strong}
%
A finite generating set of the group $U(M_f)$ is calculated as follows.
First we define a homomorphism
$$
\Stab(\Phi_f, \Lten)\to \OG^+(M_f), \qquad g\mapsto \tilde{g}.
$$
Let $g$ be an element of $\Stab(\Phi_f, \Lten)$.
Then $g$ induces an automorphism of the $\ADE$-configuration $\Phi_f$.
Since $\Phi_f$ and $\Phi^-$ are canonically isomorphic by $r_i^+\mapsto r_i^-$,
we obtain an automorphism $g^-\in \Aut(\Phi^-)$ and hence
an isometry $g^-\in \OG(\gen{\Phi^-})$.
The action of $g\oplus g^-\in \OG(B_{\Phi})$
indices a permutation of $r_1\sprime, \dots, r_n\sprime\in B_{\Phi}\dual$, and hence 
preserves the even overlattice $M_f$ of $B_{\Phi}$.
Therefore we obtain $\tilde{g}\in \OG^+(M_f)$.
\par
Let $c$ be the number of connected components
of the Dynkin diagram of $\Phi^-$,
and let
$$
\gen{\Phi^-}(2)=\gen{\Phi^-_1}(2)\oplus \dots \oplus \gen{\Phi^-_c}(2)
$$
be the orthogonal direct-sum decomposition of $\gen{\Phi^-}(2)$
according to the connected components
of the Dynkin diagram of $\Phi^-$.
For $k=1, \dots, c$, 
let $u_k\sprime\in \OG(\gen{\Phi^-}(2))$ denote   the isometry that
acts on the direct-summand  $\gen{\Phi^-_j}(2)$ as the identity for $j\ne k$
and as the multiplication by $-1$ for $j=k$.
We then define  $u_k\in \OG(B_{\Phi})$
to be the direct sum of $\id_{\Lten(2)}$ and $u_k\sprime$.
Then $u_k$ acts on each of the subsets $\{r_j\sprime, r_j\spprime\}$
of $B_{\Phi}\dual$, 
and hence we can regard $u_k$ as an element of $\OG^+(M_{f})$.
\begin{proposition}\label{prop:U}
Suppose that there exists an  $\RDP$-Enriques surfaces
geometrically realizing $f\colon \Phi\inj\Lten$.
Then the group $U(M_f)$ is generated by 
the image of the homomorphism $\Stab(\Phi_f, \Lten)\to \OG^+(M_f)$ above 
and the   isometries $u_1, \dots, u_c$.
\end{proposition}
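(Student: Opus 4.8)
The plan is to study the restriction homomorphism $\res\colon U(M_f)\to\OG^+(\Lten)$ which sends $g\in U(M_f)$ to its restriction to the sublattice $\varpi^*\Lten$, identified with $\Lten$ via $\varpi^*$. Since the orthogonal complement of $\varpi^*\Lten$ in $M_f$ is negative definite, a positive cone of $M_f$ is determined by one of $\Lten$, so an element of $U(M_f)$ preserving a positive cone restricts to an element of $\OG^+(\Lten)$; as it also carries $\varpi^*\Phi_f$ to itself, its restriction lies in $\Stab(\Phi_f,\Lten)$. The key point is that $g\mapsto\tilde g$, constructed just before the proposition, is a section of $\res$: for $g\in\Stab(\Phi_f,\Lten)$ permuting $\Phi_f$ by $r_i^+\mapsto r_{\pi(i)}^+$, the isometry $\tilde g=g\oplus g^-$ sends $r_i'\mapsto r_{\pi(i)}'$ and $r_i''\mapsto r_{\pi(i)}''$, hence preserves $M_f$ and $\varpi^*\Phi_f$, so $\tilde g\in U(M_f)$ and $\res(\tilde g)=g$. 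Thus $\res$ is onto $\Stab(\Phi_f,\Lten)$ and $U(M_f)$ is generated by the image of $g\mapsto\tilde g$ together with $\ker(\res)$, and the whole assertion reduces to proving $\ker(\res)=\langle u_1,\dots,u_c\rangle$.

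The inclusion $\langle u_1,\dots,u_c\rangle\subseteq\ker(\res)$ is clear, as each $u_k$ fixes $\varpi^*\Lten$ pointwise while permuting the pairs $\{r_j',r_j''\}$. For the reverse inclusion the essential input is that the orthogonal complement $N$ of $\varpi^*\Lten$ in $M_f$ contains no roots. I would obtain this either directly—$N\cong\primR_f(2)$, all of whose norms are divisible by $4$ since $\primR_f$ is an even root lattice—or from condition $\condthree$ for an overlattice $\primM_f$ supplied by the geometric realizability of $f$, combined with Remark~\ref{rem:sodoesMf}. Granting root-freeness of $N$, the argument of Lemma~\ref{lem:twolifts} carries over verbatim to $M_f$: writing $u:=u_1\cdots u_c$ for the involution that is $+1$ on $\varpi^*\Lten$ and $-1$ on $N$, the only roots $r'$ of $M_f$ with $r'+u(r')=\varpi^*r_i^+$ and $\intf{r',u(r')}=0$ are $r_i'$ and $r_i''$.

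Now let $h\in\ker(\res)$. Since $h$ fixes $\varpi^*\Lten$ pointwise it preserves $N$ and commutes with $u$ (which is central, being $-1$ on $N$), and it carries roots to roots; hence $h(r_i')$ is a root with $h(r_i')+u(h(r_i'))=h(\varpi^*r_i^+)=\varpi^*r_i^+$ and $\intf{h(r_i'),u(h(r_i'))}=0$. By the uniqueness of lifts, $h(r_i')\in\{r_i',r_i''\}$, whence $h(\varphi(r_i^-))=h(r_i'-r_i'')=\epsilon_i\,\varphi(r_i^-)$ for some $\epsilon_i\in\{\pm1\}$. Finally, if $r_i^-$ and $r_j^-$ are adjacent in the Dynkin diagram of $\Phi^-$ then $\intf{\varphi(r_i^-),\varphi(r_j^-)}=2\intf{r_i,r_j}\ne0$, and applying the isometry $h$ forces $\epsilon_i\epsilon_j=1$; thus $\epsilon_i$ is constant on each connected component, and $h=u_1^{a_1}\cdots u_c^{a_c}\in\langle u_1,\dots,u_c\rangle$.

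The main obstacle is precisely this reverse inclusion $\ker(\res)\subseteq\langle u_1,\dots,u_c\rangle$: a priori the kernel could contain exotic isometries of $\gen{\Phi^-}$ (reflections or permutations of the summands) that happen to preserve $M_f$, and the root-freeness of $N$—hence the uniqueness of lifts—is exactly what excludes them. A secondary point to verify carefully is that $\res$ indeed takes values in $\OG^+(\Lten)$, i.e.\ that the relevant elements of $U(M_f)$ preserve a positive cone of $M_f$; this again uses the negative-definiteness of $N$. Once these are in place, the splitting $U(M_f)=(\text{image of }g\mapsto\tilde g)\cdot\langle u_1,\dots,u_c\rangle$ yields the proposition.
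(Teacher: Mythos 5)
Your proof is correct and is essentially the paper's own argument: the same restriction homomorphism to $\OG^+(\Lten)$ split by $g\mapsto\tilde g$, reduction to its kernel, root-freeness of the orthogonal complement $N(M_f)$ of $\varpi^*\Lten$ deduced from geometric realizability together with Remark~\ref{rem:sodoesMf}, uniqueness of lifts exactly as in Lemma~\ref{lem:twolifts}, and propagation of the signs $\epsilon_i$ along connected components of the Dynkin diagram via~\eqref{eq:connectrirj}. One caveat on your optional ``direct'' justification: the identification $N(M_f)\cong\primR_f(2)$ is wrong in general --- a computation shows $N(M_f)\cong P(2)$ where $P=\{x\in\primR_f : 2x\in R_f\}$ is the preimage of the $2$-torsion of $\primR_f/R_f$, so for instance for $\Phi$ of type $3A_2$ with $\primR_f\cong E_6$ one gets $R_f(2)$ rather than $E_6(2)$ --- although your conclusion survives, since $P$ is even and hence every norm of $P(2)$ is divisible by $4$, so $N(M_f)$ still contains no roots.
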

\begin{proof}
Let $U\sprime\subset \OG^+(M_f)$ be the group 
generated by 
the image of the homomorphism $\Stab(\Phi_f, \Lten)\to \OG^+(M_f)$
and the   isometries $u_1, \dots, u_c$.
By construction,
we have $U\sprime\subset U(M_f)$.
We prove $U\sprime\supset U(M_f)$.
Since $\varpi^*\Lten=\Lten(2)$, 
we have a natural restriction map
$$
\xi\colon  U(M_f)\to \OG^+(\varpi^*\Lten)=\OG^+(\Lten), \quad g\mapsto g |_{\varpi^*\Lten}.
$$
The image of $\xi$ is contained in $\Stab(\Phi_f, \Lten)$ by definition, 
and $\xi$ has a section over $\Stab(\Phi_f, \Lten)\subset \OG^+(\Lten)$.
Therefore it suffices  to show that 
an arbitrary element $g$ of $\Ker \xi$ belongs to the   subgroup 
generated by $u_1, \dots, u_c$.
By the assumption and
Remark~\ref{rem:sodoesMf},  the orthogonal complement $N(M_f)$ of $\varpi^*\Lten$ in $M_f$
contains no roots.
Hence 
the same argument as in the proof of Lemma~\ref{lem:twolifts} implies that 
each $r_i^+\in \Phi_f$ has exactly two $N(M_f)$-lifts $r_i\sprime, r_i\spprime$.
Since $r_i^{+g}=r_i^+$,
we have $\{r_i\sprime, r_i\spprime\}^g=\{r_i\sprime, r_i\spprime\}$.
Since $\varphi (r_i^-)=r_i\sprime-r_i\spprime$,
we have $\varphi (r_i^-)^g=\pm \varphi (r_i^-)$.
By~\eqref{eq:connectrirj},
if $\intf{r_i, r_j}\ne 0$,
then $\varphi (r_i^-)^g$ determines $\varphi (r_j^-)^g$.
Therefore the action of $g$ on $\gen{\Phi^-}(2)$
is equal to the action of an element of $\gen{u_1, \dots, u_c}$ on $\gen{\Phi^-}(2)$.
\end{proof}
\subsection{Complete list of $\RDP$-Enriques surfaces}\label{subsec:list}
In Section~\ref{sec:Lten},
we have calculated the complete list of equivalence classes of
embeddings $f\colon \Phi\inj \Lten$.
From each equivalence class,
we choose a representative $f\colon \Phi\inj \Lten$,
calculate a finite generating set of the group $\Stab(\Phi_f, \Lten)$
by the method given in Section~\ref{subsec:stabPhiL},
and calculate the lattice $M_f$ and a finite  generating set of 
the group $U(M_f)\subset \OG^+(M_f)$.
Then we calculate the finite set $\LLL\sprime(M_f)$ of even overlattices of $M_f$
that satisfy $\condtwo$,  $\condthree$, $\condfour$
by Proposition~\ref{prop:overlattices}.
\begin{remark}\label{rem:enlarge}
We enumerate even overlattices of $M_f$
by enlarging successively the corresponding totally isotropic subgroups
of the discriminant form $\discf{M_f}$ of $M_f$.
By Remark~\ref{rem:sodoesMf},
 if $\primM_f$ fails to satisfy $\condtwo$, $\condthree$, or $\condfour$,
then we do not have to enlarge the  totally isotropic subgroup  $\primM_f/M_f$ any more.
\end{remark}
We then  decompose $\LLL\sprime(M_f)$ into the union of the orbits 
under the action of  $U(M_f)$. 
Note that the image of
$U(M_f)$ by the natural homomorphism $\OG(M_f)\to \OG(\discf{M_f})$
is of course finite,  and is calculated from the finite generating set of $U(M_f)$.
For each orbit $o$,
we choose a representative element $\primM_f\in o$,
and check whether $\primM_f$ satisfies $\condone$ or not 
by Proposition~\ref{prop:genus}
and Remark~\ref{rem:genus}.
If $\primM_f$ satisfies $\condone$,
then $o$ corresponds to a strong equivalence class.
Thus we  obtain the complete list of strong equivalence classes of $\RDP$-Enriques surfaces.
The result is given in Table~\ref{table:main1},
and Theorem~\ref{thm:strongmain} is proved.

%
%\section{Examples}\label{sec:example}
%
%
\bibliographystyle{plain}
%\bibliography{myrefsEnrRs}

\end{document}